 \newtheorem{thm}{Theorem}[section]
 \newtheorem{cor}[thm]{Corollary}
 \newtheorem{lemma}[thm]{Lemma}
 \newtheorem{prop}[thm]{Proposition}
 \theoremstyle{definition}
 \newtheorem{defn}[thm]{Definition}
 \newtheorem{exmp}[thm]{Example}
 \theoremstyle{remark}
 \newtheorem{rem}[thm]{Remark}
 \newtheorem{question}[thm]{Question}
 \numberwithin{equation}{subsection}
 \newtheorem{ack}{Acknowledgment}
\newcommand{\BB}{\text{$\mathcal{B}$}}
\newcommand{\UU}{\text{$\mathcal{U}$}}
\newcommand{\FF}{\text{$\mathcal{F}$}}
\newcommand{\GG}{\text{$\mathcal{G}$}}
\newcommand{\VV}{\text{$\mathcal{V}$}}
\newcommand{\WW}{\text{$\mathcal{W}$}}
\newcommand{\LL}{\text{$\mathcal{L}$}}
\newcommand{\LB}{\text{$\Lambda$}}
\newcommand{\sg}{\text{$\sigma$}}
\newcommand{\infim}{\operatorname{inf}}
\newcommand{\Cat}{\operatorname{Cat}}
\newcommand{\dm}{\operatorname{dim}}
\newcommand{\str}{\operatorname{sat}}
\newcommand{\suprem}{\operatorname{sup}}
\newcommand{\Fol}{\operatorname{Fol}}
\newcommand{\MeasFol}{\operatorname{MeasFol}}
\newcommand{\maxim}{\operatorname{max}}
\newcommand{\intr}{\operatorname{int}}
\newcommand{\Crit}{\operatorname{Crit}}
\newcommand{\id}{\operatorname{id}}
\newcommand{\Homeo}{\operatorname{Homeo}}
        \newcommand{\field}[1]{\text{$\mathbb{#1}$}}
        \newcommand{\N}{\field{N}}
        \newcommand{\Z}{\field{Z}}
        \newcommand{\Q}{\field{Q}}
        \newcommand{\R}{\field{R}}
\newdimen\theight
\def\TeXref#1{%
             \leavevmode\vadjust{\setbox0=\hbox{{\tt
                     \quad\quad  {\small \textrm #1}}}%
             \theight=\ht0
             \advance\theight by \lineskip
             \kern -\theight \vbox to
             \theight{\rightline{\rlap{\box0}}%
             \vss}%
             }}%
\begin{document}

\title{LS category  of laminations with transverse invariant measure}

\author{Carlos Meni\~no Cot\'on}

\address{Departamento de Xeometr\'{\i}a e Topolox\'{\i}a\\
         Facultade de Matem\'aticas\\
         Universidade de Santiago de Compostela\\
         15782 Santiago de Compostela}

\email{carlos.meninho@gmail.com}

\thanks{Supported by MICINN (Spain): FPU program and Grant MTM2008-02640}


\begin{abstract}
A version of the tangential LS category \cite{Macias} is introduced for topological laminations with a transverse invariant measure. Here, we use the transverse measure of the contraction of a tangential categorical open set instead of counting this set. This new measured category is invariant by leafwise homotopy equivalences preserving the transverse measures, and the condition of being zero or positive is a transverse invariant. The usual tangential LS category is also bounded by the number of certain critical sets. It is also proved that the measured category is semicontinuous when the foliated structure and the transverse invariant measure varies on a fixed manifold, which is a version of a result of W.~Singhof and E.~Vogt for the tangential category \cite{Vogt-Singhof}.

Hopefully, this relation between LS category and critical sets will be useful to deal with foliated versions of variational problems, like the existence of closed leafwise geodesics, and even to simplify the proofs of classical results on manifolds.
\end{abstract}

\maketitle

\section{Introduction}
The LS category is a homotopy invariant given by the minimum number of
open subsets contractible within a topological space needed to
cover it. It was introduced by L.~Lusternik and L.~Schnirelmann in the 1930's in the setting of variational problems. Many variations of this invariant has been given. In
particular, E.~Mac\'ias and H.~Colman introduced a tangential version for
foliations, where they use leafwise contractions to transversals
\cite{HellenColman, Macias}. In this paper, we define another
version of the tangential category where a transverse invariant
measure is used to ``count'' the number of those open sets by
measuring their leafwise deformations to transversals. It will be
computed in many examples, and important results of the classical
LS category are adapted to our setting.

The measured category is the infimum in the sums of the measures of deformations of open sets covering the lamination. It satisfies versions of some of the classical results of the LS category, like the cohomological lower bound by the length of the cup product, and the upper bounds given by the dimension or the number of critical points. For this last result, we work with Hilbert laminations (providing a general background for ``foliated'' variational problems); indeed, we get two results of that type: on the one hand, the tangential LS category of Mac\'ias-Colman is a lower bound for the number of critical sets of a differentiable function, where the critical sets are defined by using the leafwise gradient flow of a function, and, on the other hand, the measured LS category is a lower bound for the measure of the set of leafwise critical points (the critical points of the restrictions of the function to the leaves). The measured category is a lower semicontinuous map in the space of foliations with transverse invariant measure on a compact manifold, which is a version of a result for the tangential category \cite{Vogt-Singhof}. Finally, when the ambient space is compact, it is proved that the condition on the measured LS category to be zero or positive is a transverse invariant.

The measured category takes value zero easily (Corollary~\ref{c:minimalfoliation}). We compute it in the case of foliations by compact leaves providing, a setting where it can be useful. 

\section{Definition and first properties of the topological
\LB-category}

We refer to \cite{Candel-Conlon} for the basic notion and definitions about laminations, foliated chart, foliated atlas, holonomy pseudogroup and transverse invariant measure. They are recalled here in order to fix notations.

A {\em Polish space\/} is a second countable and complete metrizable space. Let $X$ be a Polish space. A {\em foliated chart\/} in $X$ is a pair $(U,\varphi)$ such that $U$ is an open subset of $X$ and $\varphi:U\to B^n\times S$ is an homeomorphism, where $B^n$ is an open ball of $\R^n$ and $S$ is a Polish space. The sets $B^n\times\{\ast\}$ are called the {\em plaques\/} of the chart, and the sets of the form $\varphi^{-1}(\{\ast\}\times S)$ are called the {\em associated transversals\/}. The map $U\to S$ is called the projection associated to $(U,\varphi)$. A {\em foliated atlas\/} is a family of foliated charts, $\{(U_i,\varphi_i)\}_{i\in I}$, that covers $X$ and the change of coordinates between the charts preserves the plaques; i.e., they are locally of the form $\varphi_i\circ\varphi_j^{-1}(x,s)=(f_{ij}(x,s),g_{ij}(s))$; these maps $g_{ij}$ form the holonomy cocycle associated to the foliated atlas. A {\em lamination\/} $\FF$ on $X$ is a maximal foliated atlas satisfying the above hypothesis. The plaques of the foliated charts of a maximal foliated atlas form a base of a finer topology of $X$, called the {\em leaf topology\/}. The connected components of the leaf topology are called the {\em leaves\/} of the foliation. The {\em dimension\/} of the lamination is the dimension of the plaques when all of them are open sets of the same Euclidean space.

A foliated atlas, $\UU=\{(U_i,\varphi_i)\}_{i\in \N}$, is called {\em regular\/} if it satisfies the following properties:
\begin{enumerate}
  \item [(a)] It is locally finite.
  \item [(b)] If a plaque $P$ of any foliated chart $(U_i,\varphi_i)$ meets another foliated chart $(U_j,\varphi_j)$, then $P\cap U_j$ is contained in only one plaque of $(U_j,\varphi_j)$.
  \item [(c)] If $U_i\cap U_j\neq \emptyset$, then there exists a foliated chart $(V,\psi)$ such that $U_i\cup U_j\subset V$, $\varphi_i=\psi|_{U_i}$ and $\varphi_j=\psi|_{U_j}$.
\end{enumerate}
Any topological lamination admits a regular foliated atlas, also we can assume that all the charts are locally compact. For a regular foliated atlas $\UU=\{(U_i,\varphi_i)\}_{i\in \N}$ with $\varphi_i:U_i\to B_{i,n}\times S_i$, the maps $g_{ij}$ generate a pseudogroup on $\bigsqcup_iS_i$. Holonomy pseudogroups defined by different foliated atlases are equivalent in the sense of \cite{Haefliger}, and the corresponding equivalence class is called the {\em holonomy pseudogroup\/} of the lamination; it contains all the information about its transverse dynamics. A {\em transversal\/} $T$ is a topological set of $X$ so that, for every foliated chart, $(U,\varphi)$, the corresponding projection restricts to a local homeomorphism $U\cap T\to S$. A transversal is said to be {\em complete\/} if it meets every leaf. On any complete transversal, there is a representative of the holonomy psudogroup which is equivalent to the representative defined by any foliated atlas via the projection maps defined by its charts. A {\em transverse invariant measure\/} is a measure on the space of any representative of the holonomy psudogroup invariant by its local transformations; in particular, it can be given as a measure on a complete transversal invariant by the corresponding representative of the holonomy pseudogroup. A {\em transverse set\/} is a measurable set that meets each leaf in a countable set. Any transverse invariant measure can be extended to all transverse sets so that they become invariant by measurable transformations that keep each point in the same leaf \cite{Connes}.

Observe that a measure is regular if the measure of a set $B$ is the infimum of the measures of the open sets containing it and the supremum of the measures of compact sets contained in $B$. Note that measures in the conditions of the Riesz representation theorem are regular, so it is not a very restrictive condition on measures.

\begin{rem}
Observe that the tangential model of the charts could be changed in order to give a more general notion of lamination: instead of taking open balls of $\R^n$ as $B^n$, we could take connected and locally contractible Polish spaces or separable Hilbert spaces. Also, it is possible to define the notion of $C^r$ foliated structure by assuming that the tangential part of changes of coordinates are $C^r$, with the leafwise derivatives of order $\le r$ depending continuously on the transverse coordinates. We can speak about regular atlases in Hilbert laminations but we cannot assume that its foliated charts are locally compact.
\end{rem}

Let us recall the definition of tangential category \cite{HellenColman,Macias}.
A lamination $(X,\FF)$ induces a foliated
measurable structure $\FF_U$ in each open set $U$. The space $U\times\R$ admits an
obvious foliated structure $\FF_{U\times\R}$ whose leaves are
products of leaves of $\FF_U$ and $\R$. Let $(Y,\GG)$ be another measurable lamination. A foliated map $H:\FF_{U\times\R}\to \GG$ is called a {\em tangential
homotopy\/}, and it is said that the maps $H(\cdot,0)$ and $H(\cdot,1)$ are {\em tangentially homotopic\/}. We use the term {\em
tangential deformation\/} when $\GG=\FF$ and $H( -
,0)$ is the inclusion map of $U$. A deformation such that $H( - ,1)$ is constant on the leaves of
$\FF_U$ is called a {\em tangential contraction\/} or an \FF-{\em
contraction\/}; in this case, $U$ is called a {\em tangentially categorical\/} or \FF-{\em categorical\/} open set. The {\em tangential category\/} is the lowest number of
categorical open sets that cover the measurable lamination. On one leaf foliations, this definition agrees with the classical category. The category of \FF\ is
denoted by $\Cat(\FF)$. It is clear that it is a tangential homotopy invariant.

Now, we introduce the relative category that will be useful for further applications.

\begin{defn}
Let $U\subset X$ be an open subset. The {\em relative category\/} of $U$, $\Cat(U,\FF)$, is the minimum number of \FF-categorical open sets in $X$ that cover $U$.
\end{defn}

\begin{rem}
Clearly, $\Cat(U,\FF)\leq\Cat(\FF_{U})$.
\end{rem}

\begin{prop}[Subadditivity of the relative category]\label{p:subaddtivity of the relative category}
Let $\{U_i\}_{i\in\N}$ be a countable family of open subsets of $X$. Then
\[
  \Cat\left(\bigcup_iU_i,\FF\right)\leq\sum_i\Cat(U_i,\FF)\;.
\]
\end{prop}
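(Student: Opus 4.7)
The plan is to directly exhibit a cover of $\bigcup_i U_i$ by $\FF$-categorical open sets whose cardinality is at most $\sum_i\Cat(U_i,\FF)$; the statement will then follow from the definition of relative category. This is really just an unfolding of definitions together with the elementary observation that a countable union of countable families is countable.

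First, I would dispose of the trivial case: if $\sum_i\Cat(U_i,\FF)=\infty$, there is nothing to prove. So I may assume the sum is finite, which forces each $n_i:=\Cat(U_i,\FF)$ to be a finite non-negative integer, and only countably many $n_i$ can be positive (which is already the case by hypothesis).

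Next, using the definition of relative category, for every index $i$ with $n_i\geq 1$ I would choose $\FF$-categorical open subsets $V_{i,1},\dots,V_{i,n_i}$ of $X$ such that
\[
U_i\subset\bigcup_{j=1}^{n_i}V_{i,j}\;.
\]
(When $n_i=0$ one has $U_i=\emptyset$ and nothing is to be chosen.) Then
\[
\bigcup_{i\in\N}U_i\ \subset\ \bigcup_{i\in\N}\bigcup_{j=1}^{n_i}V_{i,j}\;,
\]
and the right-hand side is a countable family of $\FF$-categorical open sets in $X$ whose total cardinality is exactly $\sum_i n_i$. Applying the definition of $\Cat(\bigcup_iU_i,\FF)$ to this family yields the desired inequality.

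There is no serious obstacle: the only mild point is justifying that the minimum defining $\Cat(U_i,\FF)$ is actually attained when finite, which is immediate since any non-empty set of non-negative integers has a minimum, and that minimum is realised by some cover by definition. The countable subadditivity then reduces to a bookkeeping exercise on countable unions of finite families.
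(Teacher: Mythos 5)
Your argument is correct: choosing, for each $i$ with $\Cat(U_i,\FF)<\infty$, a minimal cover of $U_i$ by $\FF$-categorical open sets of $X$ and taking the union of these families gives a cover of $\bigcup_i U_i$ of cardinality at most $\sum_i\Cat(U_i,\FF)$, which is exactly what the definition of relative category requires. The paper states this proposition without proof, treating it as elementary, and your unfolding of the definitions is the intended argument.
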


The following result is about the structure of a tangentially categorical open set.

\begin{lemma}[Singhof-Vogt~\cite{Vogt-Singhof}]\label{l:vogt0}
Let $\FF$ be a foliation of dimension $m$ and codimension $n$ on a
manifold $M$, let $U$ be an \FF-categorical open set, let $x\in
U$, let $D\subset U$ be a transverse manifold of dimension $n$, and
suppose that $x$ belongs to the interior of $D$. Then there exists
a neighborhood $E$ of $x$ in $D$ such that any leaf of $\FF_{U}$
meets $E$ in at most one point.
\end{lemma}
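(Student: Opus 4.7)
The plan is to interpret the final-time map of an $\FF$-contraction of $U$ as a holonomy transformation along a leafwise path. Let $H\colon U\times[0,1]\to U$ be an $\FF$-contraction, so $H(\cdot,0)=\id_U$, each path $t\mapsto H(y,t)$ stays in the leaf through $y$, and $c:=H(\cdot,1)$ is constant on each leaf of $\FF_U$. Set $p:=c(x)$, which lies on the leaf $L_x$ through $x$.

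Fix a foliated chart $(V_p,\varphi_p)$ around $p$ with transverse projection $\pi_p\colon V_p\to S_p$. Using compactness of $[0,1]$ and continuity of $H$, cover the leafwise path $H(\{x\}\times[0,1])$ by a finite chain of foliated charts $V_0,V_1,\dots,V_k$, where $V_0$ is chosen so that (a shrinking of) $D$ is contained in an associated transversal of $V_0$, and $V_k=V_p$. By uniform continuity of $H$ on the compact set $\{x\}\times[0,1]$, there exist a neighborhood $E_0$ of $x$ in $D$ and a partition $0=t_0<t_1<\dots<t_k=1$ such that, for every $y\in E_0$, the segment $H(y,[t_{i-1},t_i])$ lies in $V_i$ for each $i$. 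Since $H(y,\cdot)$ stays inside the leaf through $y$, this segment lies in a single plaque of $V_i$, and hence $\pi_i\circ H(y,\cdot)$ is constant on $[t_{i-1},t_i]$, where $\pi_i\colon V_i\to S_i$ is the associated transverse projection. Chaining these transverse coordinates through the holonomy cocycle $\{g_{i,i+1}\}$, one obtains the map
\[
  h\colon E_0\to S_p,\qquad h(y)=\pi_p(c(y)),
\]
which coincides with the composition of the holonomy transformations along the chain traversed by $H(x,\cdot)$. After shrinking $E_0$ to a neighborhood $E$ of $x$ in $D$, this composition of local homeomorphisms restricts to an injection on $E$.

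Finally, if $y_1,y_2\in E$ lie on the same leaf of $\FF_U$, then $c(y_1)=c(y_2)$ because $c$ is leaf-constant; consequently $h(y_1)=h(y_2)$, and the injectivity of $h|_E$ forces $y_1=y_2$, as required. The main obstacle is the chart-chain construction of $h$: one must ensure that all the nearby deformation paths $H(y,\cdot)$ visit the chain $V_0,\dots,V_k$ in the same temporal order and make the same plaque transitions, so that a single holonomy recipe simultaneously computes $c|_E$ as a local homeomorphism into a transversal.
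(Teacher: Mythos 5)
Your argument is correct and is essentially the canonical one: the paper itself cites this lemma from Singhof--Vogt without reproducing a proof, and the proof in \cite{Vogt-Singhof} is precisely the holonomy argument you give --- the time-one map $c=H(\cdot,1)$, read in transverse coordinates along a chart chain covering the contraction path of $x$, is the holonomy translation of that path and hence injective on a small $E\subset D$, while leaf-constancy of $c$ on $\FF_U$ then forbids a leaf from meeting $E$ twice. The only inessential slips are that a tangential contraction maps $U\times\R$ into the ambient manifold rather than into $U$ (harmless, since the holonomy is taken in $M$), and the ``same plaque transitions'' worry you flag at the end is already resolved by your fixed partition $t_0<\dots<t_k$ together with regularity of the atlas, under which each plaque of $V_i$ meets at most one plaque of $V_{i+1}$.
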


Observe that this lemma extends to the case of Hilbert laminations with the same proof. Hence, if the ambient space is separable, the final step of a tangential contraction is a countable union of local transversals of the foliation. Therefore it can be measured by a transverse invariant measure.

Let $\LB$ be a transverse invariant measure for
\FF\ and let $U$ be a tangentially categorical open set. Define
$$
  \tau_\LB(U)=\infim\{\,\LB(H(U\times\{1\})\mid H\ \text{is a tangential contraction of}\ U\,\}\;.
$$
Then the \LB-{\em category\/} of $(\FF,\LB)$ is defined as
$$\Cat(\FF,\LB)=\infim_{\UU}\sum_{U\in\UU}\tau_\LB(U)\;,$$
where \UU\ runs in the coverings of $X$ by \FF-categorical open sets. If the homotopies
used in this definition are required to be $C^r$ on leaves with the leafwise derivatives of order $\le r$ depending continuously on the transverse coordinates, then the term $C^r$ \LB-{\em
category\/} is used, with the notation $\Cat^r(\FF,\LB)$. Observe that, for one leaf laminations with the transverse invariant measure given by the counting measure, the measured category agrees with the classical LS category.

\begin{rem}
The map $\tau_\LB$ can be extended to any open set and tangential deformation. This is possible if we extend the transverse invariant measure to an ambient measure. For this purpose, we can use the {\em coherent extension\/} introduced in \cite{Menino1}. This measure is, roughly speaking, the integration of the counting measure in the leaves with respect to the transverse invariant measure. With this extension of $\tau$, the following holds: if $\LB$ is positive in non-empty open local transversals, then $\tau_\LB(U)<\infty\Rightarrow U\ \text{is \FF-categorical}$.
\end{rem}

The first property of the \LB- category that we prove is its homotopy invariance. A tangential homotopy equivalence $h$ from \FF\ to \GG\ induces a canonical
bijection between the sets of transverse invariant measures on \GG\ and \FF, which holds since $h$ induces an equivalence between the holonomy pseudogroups \cite{Alvarez-Masa}. For any tranverse invariant measurer $\LB$ of \GG, let $h^*\LB$ denote the transverse invariant measure in \FF\ induced by \LB\ and $h$. From now on, any tangential homotopy equivalence between laminations with transverse invariant measures is assumed to be compatible with the measures in the above sense.

\begin{lemma}[\cite{Menino3}]\label{l:invariance1}
Let $(X,\FF,\LB)$ and $(Y,\GG,\Delta)$ be a lamination with transverse invariant measures, and let $h:(X,\LB)\to (Y,\Delta)$ be a
measurable homotopy equivalence.
Then, for all transverse set $K\subset X$, $h(K)$ is also transverse and
$\Delta(h(K))\leq\LB(K)$.
\end{lemma}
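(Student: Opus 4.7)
The plan is to combine two ingredients: (i) the transverse measure of a transverse set can be computed as the integral of the leafwise counting measure against the measure induced on the leaf space (Connes' extension, recalled in the excerpt), and (ii) a measurable foliated homotopy equivalence descends to a measure-preserving bijection between the measured leaf spaces (cf.\ \cite{Alvarez-Masa}).

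First I would check that $h(K)$ is transverse. Since $h$ is foliated, for each leaf $L'$ of $\GG$ we have $h(K)\cap L'=\bigcup_{h(L)\subset L'}h(K\cap L)$. Each summand is the image of a countable set and hence countable, while the fiber $\{L:h(L)\subset L'\}$ is $\Delta$-essentially a singleton because $h$ induces an equivalence of holonomy pseudogroups. Measurability of $h(K)$ follows from that of $h$ and of $K$.

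For the inequality, I would use the Connes-type integral formula
\[
\LB(K)=\int_{X/\FF}\#(K\cap L)\,d\LB(L),\qquad \Delta(h(K))=\int_{Y/\GG}\#(h(K)\cap L')\,d\Delta(L'),
\]
where $\LB$ and $\Delta$ are identified with their descents to the measured leaf spaces. The compatibility $h^*\Delta=\LB$ together with the pseudogroup equivalence makes the map $L\mapsto h(L)$ well defined $\LB$-a.e., bijective onto its image modulo null sets, and measure preserving. Since $h$ is leaf preserving, $h(K)\cap h(L)=h(K\cap L)$, so a change of variables gives
\begin{align*}
\Delta(h(K))&=\int_{X/\FF}\#\bigl(h(K\cap L)\bigr)\,d\LB(L)\\
&\leq\int_{X/\FF}\#(K\cap L)\,d\LB(L)=\LB(K),
\end{align*}
with the inequality accounting for the possible collapsing of distinct points of $K\cap L$ under $h|_L$.

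The main obstacle is step (ii): showing that a tangential homotopy equivalence, rather than a foliated isomorphism, descends to a measure-preserving bijection of the measured leaf spaces. This requires carefully unwinding the Alvarez-Masa equivalence of holonomy pseudogroups and its interaction with transverse invariant measures, which is essentially the content of \cite{Menino3}. Once that framework is in place, the inequality reduces to the pointwise fact $\#h(A)\leq\#A$ integrated against the transverse measure.
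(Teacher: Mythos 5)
The paper does not actually prove this lemma; it is quoted from \cite{Menino3}, so there is no internal argument to compare against. Judged on its own terms, your outline identifies the right two ingredients (countability of $h(K)$ along leaves, and the correspondence of transverse measures under the pseudogroup equivalence induced by $h$), and the final inequality really is just ``$\#h(A)\leq\#A$ integrated transversally.'' Two points need repair, though. First, the formula $\LB(K)=\int_{X/\FF}\#(K\cap L)\,d\LB(L)$ presupposes a genuine measure on the leaf space; the paper only constructs $\LB_\FF$ on $X/\FF$ in the compact-Hausdorff case, and for a general (e.g.\ ergodic) lamination $X/\FF$ is not a usable measure space. The rigorous substitute is the coherent extension of $\LB$ to transverse sets, characterized by invariance under Borel transformations keeping each point in its leaf; with that in hand the clean argument is: $h|_K$ is countable-to-one (since $K\cap L$ is countable and the induced map on leaves is a bijection, as $g\circ h$ and $h\circ g$ are tangentially homotopic to the identities --- not merely ``essentially'' so), hence by Lusin--Novikov one can write $K=\bigsqcup_n K_n$ with $h$ injective on each $K_n$; then $h(K_n)$ is Borel and $\Delta(h(K_n))=\LB(K_n)$ by the compatibility $\LB=h^*\Delta$, and $\Delta(h(K))\leq\sum_n\Delta(h(K_n))=\LB(K)$ by subadditivity. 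This decomposition also supplies the measurability of $h(K)$, which you assert but do not justify (images of Borel sets under Borel maps are not Borel in general). Second, you correctly flag that the descent of $h$ to a measure-preserving correspondence is the real content; in this paper that is simply the standing convention that $h$ is compatible with the measures via the \cite{Alvarez-Masa} equivalence, so you may take it as given rather than as an obstacle.
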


\begin{prop}[The \LB-category is a tangential homotopy invariant]\label{p:invariance}
Let $(X,\FF,\LB)$ and $(Y,\GG,\Delta)$ be tangentially homotopy
equivalent laminations  with transverse invariant measures. Then $\Cat(\FF,\LB)=\Cat(\GG,\Delta)$.
\end{prop}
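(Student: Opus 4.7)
The plan is to establish $\Cat(\GG,\Delta)\le\Cat(\FF,\LB)$; the reverse inequality then follows by symmetry. Fix a tangential homotopy inverse $g:(Y,\GG)\to(X,\FF)$ of $h$ together with a tangential homotopy $K:Y\times[0,1]\to Y$ from $\id_Y$ to $h\circ g$. Given any cover $\UU=\{U_i\}$ of $X$ by $\FF$-categorical open sets, I claim that the open sets $V_i:=g^{-1}(U_i)$ form a $\GG$-categorical cover of $Y$ and satisfy $\tau_\Delta(V_i)\le\tau_\LB(U_i)$; summing and passing to the infimum over such $\UU$ then yields the result.

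For each $\FF$-contraction $H_i:U_i\times[0,1]\to X$, I build a $\GG$-contraction of $V_i$ by concatenating $K|_{V_i\times[0,1]}$ with the tangential deformation $(y,t)\mapsto h(H_i(g(y),t))$; the resulting tangential homotopy joins the inclusion $V_i\hookrightarrow Y$ to the map $y\mapsto h(H_i(g(y),1))$. The main subtlety, and the step where I expect the real work to lie, is verifying that this end map is constant on the leaves of $\GG_{V_i}$: if $y_1,y_2$ lie in the same connected component of a leaf $L$ of $\GG$ intersected with $V_i$, local path-connectedness of leaves provides a leafwise path from $y_1$ to $y_2$ inside $V_i$; its image under $g$ is a path in $g(L)\cap U_i$, so $g(y_1)$ and $g(y_2)$ lie in the same leaf of $\FF_{U_i}$, where $H_i(-,1)$ is constant by hypothesis. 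This is what makes the preimage construction categorical despite the fact that $g$ need not carry plaques to plaques.

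Having a $\GG$-contraction of $V_i$, its image at time $1$ is contained in $h(H_i(U_i\times\{1\}))$. By the Hilbert-lamination extension of Lemma~\ref{l:vogt0} recorded above, $H_i(U_i\times\{1\})$ is a transverse set in $(X,\FF)$, so Lemma~\ref{l:invariance1} applies and gives
\[
\Delta\bigl(h(H_i(U_i\times\{1\}))\bigr)\le\LB\bigl(H_i(U_i\times\{1\})\bigr).
\]
Taking the infimum over the $\FF$-contractions $H_i$ yields $\tau_\Delta(V_i)\le\tau_\LB(U_i)$, and therefore
\[
\Cat(\GG,\Delta)\le\sum_i\tau_\Delta(V_i)\le\sum_i\tau_\LB(U_i).
\]
A final infimum over $\FF$-categorical covers $\UU$ of $X$ gives $\Cat(\GG,\Delta)\le\Cat(\FF,\LB)$, and the symmetric argument applied to $g$ (with compatible measure $g^*\LB=\Delta$) completes the proof.
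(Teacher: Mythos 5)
Your argument is correct and is essentially the paper's own proof in mirror image: the paper pulls back a categorical cover of $Y$ to $X$ along $h$, concatenates the homotopy from the identity to $g\circ h$ with the transported contraction, and applies Lemma~\ref{l:invariance1} to bound the measure of the deformed set, exactly as you do with the roles of $h$ and $g$ exchanged. Your explicit check that the end map is constant on the leaves of $\GG_{V_i}$ is a detail the paper leaves implicit, but it does not change the method.
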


\begin{proof}
Let $h:(\FF,\LB)\to (\GG,\Delta)$ be a tangential homotopy equivalence, and let $g$ be a
homotopy inverse of $h$. Let $\{U_n\}$ ($n\in\N$) be a covering of $Y$ by
open sets. Then $\{h^{-1}(U_n)\}$ is a
covering of $X$ by open sets. We will prove that
$\tau_{\LB}(h^{-1}(U_n))\leq\tau_\Delta(U_n)$ for all $n\in\N$.
Let $H^n$ be a tangential contraction on each $U_n$,
and let $F$ be a tangential homotopy connecting the identity map and $g\circ h$.
Let
$$G=g\circ H\circ (f\times\id): h^{-1}(U)\times \R\to X\;.$$
Then $K:h^{-1}(U)\times \R\to X$, defined by
$$
K(x,t)=
\begin{cases}
F(x,2t) & \text{if $t\leq 1/2$} \\
G(x,2t-1) & \text{if $t\geq 1/2$\;,}
\end{cases}
$$
is a tangential contraction. Lemma~~\ref{l:invariance1} yields
$$\LB(K(h^{-1}(U)\times\{1\}))=\LB(g(H(U\times\{1\})))\leq\Delta(H(U\times\{1\}))\;.$$
Hence $\tau_{\LB}(h^{-1}(U_n))\leq\tau_\Delta(U_n)$ for all
$n\in\N$. Therefore $\Cat(\FF,\LB)\leq\Cat(\GG,\Delta)$. The
inverse inequality is analogous.
\end{proof}

The above proposition has an obvious $C^r$
version.

\begin{defn}
Let $(X,\FF,\LB)$ be a lamination with a transverse
invariant measure. A {\em null-transverse\/} set is a transverse set
$B$ such that $\LB(B)=0$.
\end{defn}

The following propositions are elementary.

\begin{prop}\label{p:nullset}
Let $(X,\FF,\LB)$ be a lamination with a transverse
invariant measure, and let $B$ be a null-transverse set. Then
$\Cat(\FF,\LB)$ can be computed by using only coverings of $X\setminus \str(B)$ by open sets in $X$, where $\str(B)$ denotes the saturation of $B$ in \FF. If $B$ is saturated and closed, then $\Cat(\FF,\LB)=\Cat(\FF_{X\setminus B},\LB_{X\setminus B})$.
\end{prop}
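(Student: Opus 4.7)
The plan is to prove the first assertion—the second will follow quickly—by establishing the nontrivial inequality $\Cat(\FF,\LB)\le\inf\sum_n\tau_\LB(U_n)$, the infimum taken over coverings $\{U_n\}_{n\in\N}$ of $X\setminus\str(B)$ by \FF-categorical open sets in $X$. The strategy is to argue that $\str(B)$ can always be covered ``for free'': for every $\epsilon>0$ there exist \FF-categorical open sets $W_1,W_2,\ldots$ in $X$ covering $\str(B)$ with $\sum_i\tau_\LB(W_i)<\epsilon$. Given such a family and any covering $\{U_n\}$ of $X\setminus\str(B)$, the union $\{U_n\}\cup\{W_i\}$ covers $X$ with total $\tau_\LB$-contribution at most $\sum_n\tau_\LB(U_n)+\epsilon$, and letting $\epsilon\to 0$ yields the first assertion.

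To construct the $W_i$, fix a regular foliated atlas $\{(U_i,\varphi_i)\}_{i\in\N}$ with $\varphi_i:U_i\to B_i^n\times S_i$, and let $S_i^B\subset S_i$ denote the transverse image of $U_i\cap\str(B)$, i.e., the set of $s\in S_i$ whose plaque lies on a leaf meeting $B$. The crucial point is that $\LB(S_i^B)=0$: since the extension of $\LB$ to transverse sets is invariant under measurable maps keeping points on the same leaf, $S_i^B$ is contained in a countable union of holonomy images of subsets of $B$, each of $\LB$-measure zero. By regularity of $\LB$, choose open $V_i\subset S_i$ with $S_i^B\subset V_i$ and $\LB(V_i)<\epsilon/2^i$, and set $W_i=\varphi_i^{-1}(B_i^n\times V_i)$. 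The obvious leafwise retraction of $W_i$ onto the local transversal $\varphi_i^{-1}(\{0\}\times V_i)$ yields $\tau_\LB(W_i)\le\LB(V_i)<\epsilon/2^i$, and $\str(B)\cap U_i=\varphi_i^{-1}(B_i^n\times S_i^B)\subset W_i$, so $\{W_i\}$ covers $\str(B)$.

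For the second assertion, take $B$ saturated and closed, so $\str(B)=B$ and $X\setminus B$ is open. The first assertion expresses $\Cat(\FF,\LB)$ as an infimum over coverings of $X\setminus B$ by \FF-categorical open sets in $X$. Given such a covering $\{U_n\}$, replace each $U_n$ by $U_n\cap(X\setminus B)$: since $B$ is saturated, any \FF-contraction of $U_n$ restricted to this subset stays on leaves of $\FF$ lying in $X\setminus B$, defining an $\FF_{X\setminus B}$-contraction with image no larger than the original, so $\tau_{\LB_{X\setminus B}}(U_n\cap(X\setminus B))\le\tau_\LB(U_n)$. Conversely, any $\FF_{X\setminus B}$-categorical open set of $X\setminus B$ is already open in $X$ (as $B$ is closed), is \FF-categorical via the same contraction, and has the same $\tau$-value. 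Combining these inequalities yields $\Cat(\FF,\LB)=\Cat(\FF_{X\setminus B},\LB_{X\setminus B})$.

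The main technical obstacle is the vanishing $\LB(S_i^B)=0$, which hinges on the extension of $\LB$ from a representative transversal to all transverse sets as a holonomy-invariant measure (as cited from Connes). Once this is in hand, the remaining steps are standard: the $\epsilon/2^i$ trick and the product structure of foliated charts immediately produce the desired \FF-categorical cover of $\str(B)$.
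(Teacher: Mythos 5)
The paper lists this proposition among the ``elementary'' ones and gives no proof, so there is nothing to compare line by line; your argument is the natural one and is essentially correct. You correctly isolate the two real points: (a) the transverse trace $S_i^B=\str(B)\cap S_i$ in each chart transversal is $\LB$-null, since it is a countable union of images of (leafwise projections of) $B$ under the countably generated holonomy pseudogroup and the extension of $\LB$ to transverse sets is invariant under leaf-preserving measurable maps; and (b) once $\str(B)$ is covered by chart-saturated open sets of total cost less than $\epsilon$, adjoining these to any covering of $X\setminus\str(B)$ gives the first assertion, and the second follows by restricting contractions to the saturated open set $X\setminus B$ and, conversely, viewing $\FF_{X\setminus B}$-contractions as $\FF$-contractions. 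The one point you should flag explicitly is that step (b) invokes outer regularity of $\LB$ to enlarge the null set $S_i^B$ to an open $V_i$ with $\LB(V_i)<\epsilon/2^i$; regularity is not among the hypotheses of the proposition as stated, and without some form of it (e.g.\ the paper's standing situation of measures finite on compact sets on locally compact charts, which are externally regular on $\sigma$-compact sets) there is no way to produce a cheap open neighborhood of $\str(B)$, so the hypothesis should be recorded rather than silently assumed. A cosmetic remark: taken literally, a saturated transverse set is empty when the leaves are uncountable, so in the second assertion ``$B$ saturated and closed'' must be read, as you implicitly do, as the saturation of a null-transverse set being closed.
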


\begin{prop}
Let $T$ be a transversal such that $\# T\cap L\le \Cat(L)$ for all $L\in\FF$. Then $\LB(T)\leq \Cat(\FF,\LB)$. 
\end{prop}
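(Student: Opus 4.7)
The plan is to fix an arbitrary countable covering $\UU=\{U_n\}_{n\in\N}$ of $X$ by $\FF$-categorical open sets with associated tangential contractions $H_n$, set $T_n:=H_n(U_n\times\{1\})$, and establish the key inequality $\LB(T)\leq\sum_n\LB(T_n)$. By Lemma~\ref{l:vogt0} and the remark following it, each $T_n$ is a countable union of local transversals, so $\LB(T_n)$ is well defined. Once the key inequality is in hand, one chooses each $H_n$ so that $\LB(T_n)\leq\tau_\LB(U_n)+\epsilon\,2^{-n}$, sums, and takes the infimum over $\UU$ with $\epsilon\to 0$ to conclude that $\LB(T)\leq\Cat(\FF,\LB)$.

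The core step is a leafwise counting argument: I would show that $\#(T\cap L)\leq\sum_n\#(T_n\cap L)$ for every leaf $L$. Since $\#(T\cap L)\leq\Cat(L)$ by hypothesis, it suffices to prove $\Cat(L)\leq\sum_n\#(T_n\cap L)$. For each $n$ and each $y\in T_n\cap L$, define
\[
P^n_y=\bigcup\bigl\{P:P\text{ is a plaque of }U_n\cap L,\ H_n(P\times\{1\})=\{y\}\bigr\}.
\]
Each $P^n_y$ is open in $L$, and the restriction of $H_n$ furnishes a deformation of $P^n_y$ inside $L$ onto the single point $y$, so $P^n_y$ is categorical in $L$ in the classical LS sense. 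The family $\{P^n_y\}_{n,\,y}$ covers $L$: every $x\in L$ lies in some plaque $P$ of some $U_n\cap L$, and thus in $P^n_y$ with $y=H_n(P\times\{1\})$. Its cardinality is $\sum_n\#(T_n\cap L)$, proving the bound.

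To turn the leafwise count into the measure-theoretic inequality $\LB(T)\leq\sum_n\LB(T_n)$, I would use that the transverse invariant measure, once extended to transverse sets via the Connes construction, integrates the leafwise counting function over the leaf space \cite{Connes}. Integrating the pointwise inequality then yields the desired bound. Equivalently, one can construct a measurable leaf-preserving injection $T\hookrightarrow\bigsqcup_n T_n$ by a greedy leafwise matching and invoke the invariance of $\LB$ under leaf-preserving measurable bijections. The main obstacle is precisely this measure-theoretic step: whether one uses the integration identity or a Borel selection for the matching, one must verify measurability with respect to the transverse structure. This should be available from standard references on the Connes extension, but it is the least routine part of the argument.
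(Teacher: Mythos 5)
Your proposal is correct and takes essentially the same route as the paper's (very terse) proof: cover $X$ by $\FF$-categorical open sets, use the fact that each $U_n\cap L$ is categorical in $L$ to obtain the leafwise counting inequality $\#(T\cap L)\le\Cat(L)\le\sum_n\#(T_n\cap L)$, and conclude via the holonomy invariance of $\LB$ on transverse sets. Your decomposition into the sets $P^n_y$ and your remarks on the measurability of the leafwise matching merely make explicit the steps the paper leaves to the reader.
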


\begin{proof}
Let $\{U_n\}_{n\in\N}$ be a tangential open covering of $\FF$ and let $H^n$ be tangential contractions for each $U_n$. Hence $T^n=H^n(U\times\{1\})$ are transverse sets and $\bigcup_n T^n$ meets all leaves. Therefore $\LB(T)\leq \Cat(\FF,\LB)$ since $U_n\cap L$ is categorical in $L$ for all $n\in\N$ which $U_n\cap L\neq\emptyset$ (see {\em e.g.\/} \cite{Vogt-Singhof}), and $\Lambda$ is holonomy invariant.
\end{proof}

Let $M$ be a manifold, $S$ a locally compact Polish space, $h:\pi_1(M)\to \Homeo(S)$ a homomorphism, and $\LB$ a measure on $S$ invariant by $h(\pi_1(M))$. Then the quotient space, $\widetilde{M}\times_h S$, of $\widetilde{M}\times S$, by the relation $(x,s)\sim (g^{-1} x, h(g)(s))$, $g\in\pi_1(M)$ is called the suspension of the homomorphism $h$, where $\widetilde{M}$ is the universal covering space of $M$. The suspension has a structure of lamination whose leaves are covering spaces of $M$, and $\LB$ is a transverse invariant measure.

\begin{prop}\label{p:boundsusp}
If $\FF$ is a lamination induced by a suspension, then $\Cat(\FF,\LB)\leq \Cat(M)\cdot \LB(S)$.
\end{prop}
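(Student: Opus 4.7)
The plan is to pull back any categorical cover of $M$ along the natural projection $p:\widetilde M\times_h S\to M$ coming from the first coordinate, and show that each of the $\Cat(M)$ resulting opens is $\FF$-categorical with $\tau_\LB\le\LB(S)$. I would fix $k=\Cat(M)$ open sets $V_1,\dots,V_k$ covering $M$, each contractible in $M$ via a homotopy $H_i:V_i\times I\to M$ with $H_i(\cdot,1)\equiv c_i\in M$, and set $U_i=p^{-1}(V_i)$; these cover the suspension. Once the two claims are in hand, the definition of the \LB-category directly gives
$\Cat(\FF,\LB)\le\sum_i\tau_\LB(U_i)\le k\cdot\LB(S)$.

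To produce a tangential contraction of $U_i$, I would work one connected component of $V_i$ at a time. Each component $V_i^{(a)}$ is contractible in $M$, so the inclusion-induced map $\pi_1(V_i^{(a)})\to\pi_1(M)$ is trivial, and every connected component of the preimage of $V_i^{(a)}$ in $\widetilde M$ is homeomorphic to $V_i^{(a)}$ via the covering projection. Choosing a lift $\widetilde V_i^{(a)}\subset\widetilde M$, the rule $(\widetilde x,s)\mapsto[\widetilde x,s]$ gives a homeomorphism $\widetilde V_i^{(a)}\times S\to p^{-1}(V_i^{(a)})$, and $H_i|_{V_i^{(a)}}$ lifts to $\widetilde H_i^{(a)}:\widetilde V_i^{(a)}\times I\to\widetilde M$ ending at some $\widetilde c_i^{(a)}\in p^{-1}(c_i)$. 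Since distinct components of $V_i$ yield disjoint slabs, setting $F_i([\widetilde x,s],t)=[\widetilde H_i^{(a)}(\widetilde x,t),s]$ slab by slab defines a continuous tangential contraction of $U_i$: the $S$-coordinate is fixed, so the motion stays in leaves, and at time $1$ the image lies in the single fiber $p^{-1}(c_i)$.

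For the measure estimate, any fixed lift $\widetilde c\in p^{-1}(c_i)$ gives a bijection $s\mapsto[\widetilde c,s]$ from $S$ to $p^{-1}(c_i)$, because $\pi_1(M)$ acts freely on $\widetilde M$. The $h(\pi_1(M))$-invariance of $\LB$ makes this identification carry $\LB$ to a well-defined transverse measure of total mass $\LB(S)$ on the fiber (independent of the choice of $\widetilde c$). Therefore $F_i(U_i\times\{1\})\subset p^{-1}(c_i)$ has $\LB$-measure at most $\LB(S)$, giving $\tau_\LB(U_i)\le\LB(S)$.

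The point requiring care is the disconnected case: when $V_i$ has several components, each slab contracts to its own transversal $T^{(a)}=\{[\widetilde c_i^{(a)},s]:s\in S\}$ inside $p^{-1}(c_i)$, and it is not a priori clear why their total measure should not grow with the number of components. The key observation, which makes the argument work, is that under the identification $p^{-1}(c_i)\cong S$ each $T^{(a)}$ coincides with the whole fiber (since $h(\pi_1(M))S=S$), so their union still has $\LB$-measure $\LB(S)$ rather than summing over components. This collapse is where the $h$-invariance of $\LB$ is essential.
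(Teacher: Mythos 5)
Your proof is correct, and it is the natural argument for this statement: the paper actually states Proposition~\ref{p:boundsusp} without proof, treating it as elementary. Pulling back a categorical cover of $M$, lifting the contractions to $\widetilde M$ component by component, and observing that every transversal $\{[\widetilde c_i^{(a)},s]\mid s\in S\}$ coincides with the whole fiber $p^{-1}(c_i)\cong S$ (so the images do not accumulate measure over components) is exactly the intended reasoning, and your handling of that last point is the only place where care is genuinely needed.
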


\begin{prop}\label{p:products}
For a manifold $M$ and a locally compact Polish space $T$, let $M\times T$
be foliated as a product. Then $\Cat(M\times T,\LB)=\Cat(M)\cdot\LB(T)$ for every measure \LB\ on $T$, considered as a transverse invariant measure of $M\times T$ with the lamination with leaves $M\times\{*\}$.
\end{prop}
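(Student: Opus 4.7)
The inequality $\Cat(M\times T,\LB)\le\Cat(M)\cdot\LB(T)$ will be immediate from Proposition~\ref{p:boundsusp} applied to the trivial homomorphism $\pi_1(M)\to\Homeo(T)$, since $M\times T$ is the corresponding suspension.

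For the reverse inequality, my plan is to slice by the transverse parameter. Let $\{U_n\}_{n\in\N}$ be an arbitrary covering of $M\times T$ by $\FF$-categorical open sets with chosen contractions $H^n\colon U_n\times\R\to M\times T$. For each $t\in T$, the slice $U_n^t:=U_n\cap(M\times\{t\})$ is open in the leaf $M\times\{t\}\cong M$, and because $H^n$ is leafwise it restricts to a contraction of $U_n^t$ in $M$ (each connected component of $U_n^t$ collapsing to a point). Hence $\{U_n^t\}_n$ is a categorical covering of the connected manifold $M$, which gives the pointwise bound
\[
\#\{n\in\N:U_n^t\neq\emptyset\}\ge\Cat(M)\qquad\text{for every }t\in T.
\]

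To turn this into a measured inequality I would exploit that a product lamination has trivial holonomy, so the coherent extension of $\LB$ recalled in the earlier remark evaluates a transverse set $S$ as $\int_T\#\bigl(S\cap(M\times\{t\})\bigr)\,d\LB(t)$. Applied to $T_n:=H^n(U_n\times\{1\})$, and noting that each nonempty slice $U_n^t$ contributes at least one point to $T_n\cap(M\times\{t\})$, this yields $\LB(T_n)\ge\LB(\pi_T(U_n))$. Summing over $n$, exchanging sum and integral by Tonelli, and inserting the pointwise bound above gives
\[
\sum_n\LB(T_n)\ge\int_T\#\{n:U_n^t\neq\emptyset\}\,d\LB(t)\ge\Cat(M)\cdot\LB(T),
\]
and taking the infimum over coverings and contractions produces $\Cat(M\times T,\LB)\ge\Cat(M)\cdot\LB(T)$.

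The step I expect to require the most care is the identification of $\LB(T_n)$ with the fibrewise counting integral; this relies on the triviality of the holonomy of a product lamination and on the coherent-extension formalism. Measurability of $t\mapsto\#\{n:U_n^t\neq\emptyset\}$, needed to invoke Tonelli, is not an obstacle because $U_n^t\neq\emptyset$ precisely when $t$ lies in the open subset $\pi_T(U_n)$ of $T$.
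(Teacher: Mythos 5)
Your argument is correct; note that the paper itself states this proposition without proof, listing it among the ``elementary'' ones, so there is no official argument to compare against --- but your two-sided proof (Proposition~\ref{p:boundsusp} with the trivial holonomy homomorphism for $\le$, and leafwise slicing combined with the fibrewise counting formula for the extended measure for $\ge$) is the natural way to fill the gap and it works. The only step worth making fully explicit is the inequality $\LB(T_n)\ge\LB(\pi_T(U_n))$: since by Lemma~\ref{l:vogt0} the set $T_n=H^n(U_n\times\{1\})$ is a countable union of local transversals, one can extract a Borel subset of $T_n$ on which $\pi_T$ is injective with image $\pi_T(U_n)$, and then invoke the invariance of the extended measure under leaf-preserving measurable maps; after that, Tonelli and the pointwise bound $\#\{n:U_n^t\neq\emptyset\}\ge\Cat(M)$ (valid because each component of $U_n^t$ contracts to a point inside the leaf $M\times\{t\}$) give the conclusion exactly as you wrote.
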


\begin{prop}\label{p:catsubspaces}
Let $\{U_n\}$ \upn{(}$n\in\N$\upn{)} be a covering by saturated open
sets of $(X,\FF,\LB)$. Then $\Cat(\FF,\LB)\leq\sum_n\Cat(\FF_{U_n},\LB_{U_n})$.
Here, the equality holds if $\{U_n\}$ is a partition.
\end{prop}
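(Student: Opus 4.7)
The plan rests on one central observation: since each $U_n$ is saturated, the leaves of $\FF$ that meet $U_n$ lie entirely inside $U_n$, so the leaves of $\FF_{U_n}$ are exactly these leaves. As a consequence, if $V\subset U_n$ is open and $H:V\times\R\to X$ is an \FF-tangential deformation of $V$, then $H$ factors through $U_n$, because each $H(x,\cdot)$ remains in the leaf of $\FF$ through $x$, which lies in $U_n$. In particular an $\FF_{U_n}$-contraction of $V$ is an $\FF$-contraction of $V\subset X$, and conversely, and the image $H(V\times\{1\})$ lies in $U_n$ so that $\LB$ and $\LB_{U_n}$ agree on it. This yields
\[
  \tau_\LB(V)=\tau_{\LB_{U_n}}(V)\quad\text{for every open }V\subset U_n.
\]

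For the first inequality, fix $\epsilon>0$ and, for each $n$, choose a covering $\{V_{n,k}\}_k$ of $U_n$ by $\FF_{U_n}$-categorical open subsets of $U_n$ with $\sum_k\tau_{\LB_{U_n}}(V_{n,k})\leq\Cat(\FF_{U_n},\LB_{U_n})+\epsilon/2^n$. By the observation above, each $V_{n,k}$ is also $\FF$-categorical as an open subset of $X$, with $\tau_\LB(V_{n,k})=\tau_{\LB_{U_n}}(V_{n,k})$. The family $\{V_{n,k}\}_{n,k}$ is a countable covering of $X=\bigcup_nU_n$ by $\FF$-categorical open sets, hence
\[
  \Cat(\FF,\LB)\leq\sum_{n,k}\tau_\LB(V_{n,k})\leq\sum_n\Cat(\FF_{U_n},\LB_{U_n})+\epsilon,
\]
and letting $\epsilon\to 0$ gives the desired bound.

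For the reverse inequality under the partition hypothesis, let $\{W_k\}_k$ be any covering of $X$ by $\FF$-categorical open sets and, for each $k$, let $H_k$ be an $\FF$-contraction of $W_k$. Saturation of each $U_n$ together with the fact that $\{U_n\}$ is a partition gives $H_k(W_k\times\{1\})\cap U_n = H_k((W_k\cap U_n)\times\{1\})$, and the restriction of $H_k$ to $W_k\cap U_n$ is an $\FF_{U_n}$-contraction. Additivity of $\LB$ on the partition yields
\[
  \LB\bigl(H_k(W_k\times\{1\})\bigr)=\sum_n\LB_{U_n}\bigl(H_k((W_k\cap U_n)\times\{1\})\bigr)\geq\sum_n\tau_{\LB_{U_n}}(W_k\cap U_n).
\]
Taking the infimum over $H_k$ gives $\tau_\LB(W_k)\geq\sum_n\tau_{\LB_{U_n}}(W_k\cap U_n)$; summing over $k$ and interchanging the order,
\[
  \sum_k\tau_\LB(W_k)\geq\sum_n\sum_k\tau_{\LB_{U_n}}(W_k\cap U_n)\geq\sum_n\Cat(\FF_{U_n},\LB_{U_n}),
\]
because $\{W_k\cap U_n\}_k$ covers $U_n$ by $\FF_{U_n}$-categorical open sets. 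Taking the infimum over coverings of $X$ completes the equality.

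The main conceptual point is checking that saturation upgrades $\FF_{U_n}$-contractions to $\FF$-contractions and vice versa; once that is in place, the inequalities are bookkeeping via countable subadditivity for the first part and additivity of $\LB$ over the partition for the second. The only subtlety worth flagging is that an \FF-contraction need not a priori respect a mere covering by open sets; it is precisely the hypothesis that the $U_n$ are saturated (and disjoint in the partition case) which forces contractions not to mix pieces of different $U_n$.
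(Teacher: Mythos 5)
Your proof is correct, and since the paper states this proposition without proof (as one of the ``elementary'' facts), your argument supplies exactly the intended reasoning: saturation makes $\FF_{U_n}$-contractions and $\FF$-contractions of subsets of $U_n$ coincide, the inequality follows from countable subadditivity over the combined covering, and the reverse inequality in the partition case follows from disjointness plus countable additivity of $\LB$ on transverse sets. Nothing further is needed.
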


\begin{defn}
Let $(\FF,\LB)$ be a lamination with transverse invariant measure, the
{\em relative \LB-category\/} of $U$ is defined by
\[
\Cat(U,\FF,\LB)=\inf_{\UU}\sum_{V\in\UU}\tau_\LB(V)\;,
\]
where $\UU$ runs in the family of countable open coverings of $U$.
\end{defn}

\begin{rem}
Observe that $\tau_\LB(V)$ is defined by using
tangential homotopies deforming $V$ in the ambient space. Clearly,
$\Cat(U,\FF,\LB)\leq\Cat(\FF_{U},\LB)$.
\end{rem}

\begin{prop}[Subadditivity of relative
\LB-category]\label{p:subaddtivity} Let $\{U_i\}$ \upn{(}$i\in\N$\upn{)}
be a countable family of open subsets of $X$. Then
$\Cat\left(\bigcup_iU_i,\FF,\LB\right)\leq\sum_i\Cat(U_i,\FF,\LB)$.
\end{prop}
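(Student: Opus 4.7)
The plan is the standard $\varepsilon$-argument that appears for any infimum-defined subadditive invariant; the fact that we are summing measures instead of counting sets does not change the structure of the proof. If some $\Cat(U_i,\FF,\LB)=\infty$, the inequality is vacuous, so assume all terms on the right-hand side are finite (otherwise there is nothing to prove).

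Fix $\varepsilon>0$. For each $i\in\N$, the definition of $\Cat(U_i,\FF,\LB)$ as an infimum over countable coverings provides a countable open covering $\UU_i=\{V_{i,k}\}_{k\in\N}$ of $U_i$ such that each $V_{i,k}$ is $\FF$-categorical (otherwise $\tau_\LB$ contributes $+\infty$) and
\[
\sum_{k}\tau_\LB(V_{i,k})\leq \Cat(U_i,\FF,\LB)+\frac{\varepsilon}{2^{i+1}}\;.
\]
Set $\UU=\bigcup_i\UU_i$. Being a countable union of countable families, $\UU$ is a countable family of $\FF$-categorical open sets of $X$, and it covers $\bigcup_iU_i$. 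Hence by the definition of relative $\LB$-category applied to $\bigcup_iU_i$,
\[
\Cat\!\left(\bigcup_iU_i,\FF,\LB\right)\leq \sum_{V\in\UU}\tau_\LB(V)\leq\sum_i\sum_k\tau_\LB(V_{i,k})\leq \sum_i\Cat(U_i,\FF,\LB)+\varepsilon\;.
\]

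Letting $\varepsilon\to 0$ yields the desired inequality. The only point that requires a word of justification is that $\tau_\LB(V)$ is defined using tangential contractions in the ambient space $X$, not in $U_i$; this is automatic here since the $V_{i,k}$ and the contractions witnessing the infimum $\tau_\LB(V_{i,k})$ are already taken in $X$ by the definition of $\Cat(U_i,\FF,\LB)$, so reusing them for $\bigcup_iU_i$ poses no problem. No genuine obstacle arises: this mirrors Proposition~\ref{p:subaddtivity of the relative category} almost verbatim, with $\tau_\LB$ playing the role of the counting function and a dyadic distribution of $\varepsilon$ absorbing the countably many approximation errors.
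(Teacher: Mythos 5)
Your argument is correct: the paper states this proposition without proof (treating it as elementary), and your $\varepsilon/2^{i+1}$ distribution over the countable family, together with the observation that $\tau_\LB$ is already defined via contractions in the ambient space, is exactly the standard argument the author intends. The one-sided inequality $\sum_{V\in\UU}\tau_\LB(V)\leq\sum_i\sum_k\tau_\LB(V_{i,k})$ correctly handles possible repetitions of sets in the combined covering, so there is no gap.
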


Now, we explain an important lemma that allows us to cut tangentially categorical open sets into small ones, in order to compute the \LB-category when the measure is finite on compact sets. In this process, a null transverse set is generated, but we know that this kind of set can be removed in the computation of \LB-category.

\begin{lemma}\label{l:measure finite in compact sets}
Let $(\FF,\LB)$ be a lamination with a transverse invariant measure that is finite on compact sets. Let $h:S\to T'$ be a holonomy map defined on a locally compact domain that can be extended to a neighborhood $W$ of $\overline{S}$. Then there exists an open neighborhood $T$ of $\overline{S}$ where $h$ is defined and such that $\LB(\partial T)=0$.
\end{lemma}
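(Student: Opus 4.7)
The plan is to build a continuous one-parameter family of open neighborhoods of $\overline{S}$ inside $W$, whose topological boundaries are pairwise disjoint and all contained in a single compact set, and then apply a measure-theoretic pigeonhole argument: since $\LB$ is finite on that compact set, only countably many members of the family can have positive-measure boundary, leaving plenty of choices with $\LB(\partial T)=0$.

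First, I would fix a compatible metric $d$ on the ambient transversal (Polish, hence metrizable). Since $\overline{S}$ is compact and $W$ is an open neighborhood of it in a locally compact space, I would choose $\epsilon_0>0$ so that
\[
  K=\{\,x : d(x,\overline{S})\le \epsilon_0\,\}
\]
is compact and contained in $W$. For each $\epsilon\in(0,\epsilon_0)$ set
\[
  T_\epsilon=\{\,x : d(x,\overline{S})<\epsilon\,\}.
\]
Each $T_\epsilon$ is an open neighborhood of $\overline{S}$ with $\overline{T_\epsilon}\subset K\subset W$, so $h$ is defined on $T_\epsilon$, and $\partial T_\epsilon\subset\{x : d(x,\overline{S})=\epsilon\}$. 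Hence the $\partial T_\epsilon$ are pairwise disjoint as $\epsilon$ varies.

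The pigeonhole step is the crux. All boundaries $\partial T_\epsilon$ lie in the compact set $K$, and $\LB(K)<\infty$ by hypothesis. For each integer $n\ge1$, the set
\[
  A_n=\{\,\epsilon\in(0,\epsilon_0) : \LB(\partial T_\epsilon)>1/n\,\}
\]
must be finite, since any countable subfamily would produce pairwise disjoint measurable subsets of $K$ with total $\LB$-measure strictly greater than $\LB(K)$. Therefore $\bigcup_n A_n$ is at most countable, and any $\epsilon\in(0,\epsilon_0)\setminus\bigcup_n A_n$ yields the required open set $T=T_\epsilon$.

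The only nontrivial technical point is the production of the compact neighborhood $K$ with $\overline{S}\subset K\subset W$: this uses the compactness of $\overline{S}$ together with the local compactness hypothesis on the domain (extended to the open set $W$), so that each point of $\overline{S}$ admits a compact neighborhood in $W$, and a finite subcover followed by a uniform shrinking of the metric radii produces $\epsilon_0$. Everything else is a routine measure-theoretic argument over a one-parameter family.
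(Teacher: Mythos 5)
Your proof is correct and follows essentially the same route as the paper's: both consider the one-parameter family of metric $\varepsilon$-neighborhoods of $S$ inside $W$, observe that their boundaries are pairwise disjoint and lie in a set of finite $\LB$-measure, and conclude by the pigeonhole fact that an uncountable family of disjoint sets cannot all have positive measure. Your decomposition into the sets $A_n$ is just a spelled-out version of the paper's remark that an uncountable sum of positive numbers is infinite, and your explicit construction of the compact set $K$ makes precise the finiteness step that the paper leaves implicit.
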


\begin{proof}
Let $d$ be a metric on $W$ that induces its topology. For each $\epsilon>0$, let
$$
  S^\varepsilon=\{\,x\in T\mid d(x,S)<\varepsilon\,\}\;.
$$ 
There exists $\delta>0$ such that $S^\delta\subset W$. The boundaries $\partial
S^\varepsilon$ are disjoint. If all of them have nontrivial
\LB-measure, then
\[
  \LB(S^\delta)\geq \suprem\left\{\,\sum_{\varepsilon\in I}\LB(\partial S^\varepsilon)\mid 
  I\subset (0,\delta),\ I\ \text{finite}\,\right\}
  =\sum_{0<\varepsilon<\delta}\LB(\partial U^\varepsilon)=\infty\;,
\]
where the last equality follows from the fact that any
non-countable sum of positive numbers is infinite. But
$\LB(\overline{S})$ is finite since \LB\ is finite on compact
sets.
\end{proof}

\begin{lemma}\label{l:disgregation}
Let $(\FF,\LB)$ be a lamination with a transverse invariant measure that is finite on compact sets. Let $U$ be a tangentially categorical open set and let $T$ be a complete transversal. Then there exist a partition $\{F,U_n\}_{n\in\N}$  of $U$ such that $F$ is a closed and null transverse set, each $U_n$ is open, and there exists a tangential homotopy $H:\bigcup_n U_n\times \R\to \FF$ such that $H(\bigcup_n U_n\times\{1\})\subset T$.
\end{lemma}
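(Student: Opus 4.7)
My plan is to start from an $\FF$-contraction of $U$ and then push the resulting transverse section into $T$ using countably many holonomy maps whose domains have null-measure boundaries, as provided by Lemma~\ref{l:measure finite in compact sets}.

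Fix an $\FF$-contraction $H_0:U\times[0,1]\to X$ and set $\pi:=H_0(\cdot,1)$, $T_0:=\pi(U)$. By Lemma~\ref{l:vogt0}, $T_0$ is a countable union of local transversals. Since $\pi$ is leafwise constant on $\FF_U$ and each such local transversal meets any given leaf of $\FF_U$ at most once, every leaf of $\FF_U$ meets $T_0$ in exactly one point, so $T_0$ is a complete transversal of $\FF_U$ and $\pi:U\to T_0$ is a continuous leafwise retraction.

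For each $y\in T_0$, completeness of $T$ yields a leafwise path from $y$ to a point of $T$ covered by a finite chain of foliated charts; this chain defines a holonomy transformation into $T$ from an open transverse neighborhood of $y$. Applying Lemma~\ref{l:measure finite in compact sets} to a relatively compact subneighborhood, I shrink it to an open transverse set $S_y\ni y$ with $\LB(\partial S_y)=0$ on which a holonomy map $h_y:S_y\to T$ is defined. Separability of $T_0$ then yields a countable subcover $\{S_j\}_{j\in\N}$ with holonomy maps $h_j:S_j\to T$. I disjointify by setting $A_1:=S_1$ and $A_j:=S_j\setminus\overline{S_1\cup\cdots\cup S_{j-1}}$; the $A_j$ are pairwise disjoint open subsets of the local transversals carrying them, and their union exhausts $T_0$ apart from a closed null transverse residue $F\subset\bigcup_j\partial S_j$.

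Setting $U_j:=\pi^{-1}(A_j)$ gives pairwise disjoint open subsets of $U$ by continuity of $\pi$, and the desired tangential homotopy $H:\bigsqcup_jU_j\times\R\to X$ is constructed on each $U_j$ by continuing $H_0$ along the leafwise chain of plaques defining $h_j$, so that $H(x,1)=h_j(\pi(x))\in T$ for every $x\in U_j$; continuity within each $U_j$ follows from the transverse continuity of the transition functions of the finitely many charts forming the chain. The main technical obstacle I anticipate is reconciling the set-theoretic complement of $\bigsqcup_jU_j$ in $U$, which is the $\FF_U$-saturation of $F$, with the stated requirement that the residue itself be transverse. This is resolved by absorbing the $\FF_U$-leaves through $F$ into adjacent $U_j$'s via the disjointification order and the fact that each $h_j$ extends continuously across $\partial S_j$, leaving only $F\subset T_0$ outside $\bigsqcup_jU_j$; this residue is closed and null transverse by construction.
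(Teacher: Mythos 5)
Your proposal follows essentially the same route as the paper: contract $U$, use Lemma~\ref{l:vogt0} to see that the time-one image is a countable union of local transversals, push these into the complete transversal $T$ by holonomy maps whose domains have null-measure boundaries (Lemma~\ref{l:measure finite in compact sets}), disjointify, pull back by the time-one map, and concatenate the two homotopies. The only place you diverge is the final paragraph, and there your patch does not work as stated: if you absorb the $\FF_U$-plaques through the residue into adjacent $U_j$'s so that only $F\subset T_0$ is left out, the enlarged sets are no longer open (you are adjoining punctured plaques lying over boundary points of the $S_j$), and the continuous extension of $h_j$ across $\partial S_j$ does not repair that. The difficulty you flag is genuine, but it is equally present in the paper's own proof, which takes $U_n=H(-,1)^{-1}(T_n)$ and therefore excludes the entire $\FF_U$-saturation of the residue $K\subset\bigcup_i\partial S_i$, a set that is saturated and hence not literally transverse when $\dim\FF>0$. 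The conclusion should be read, consistently with how the lemma is applied in Proposition~\ref{p:homotopyholonomy}, with $F$ being this closed saturated set, ``null transverse'' in the sense that its image under the contraction is a null transverse set, so that it can be discarded via Proposition~\ref{p:nullset}. With that reading, your construction is exactly the paper's and no absorption step is needed.
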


\begin{proof}
Let $G$ be a tangential contraction of $U$. By Lemma~\ref{l:vogt0}, $H(U\times\{1\})=S$ is a countable union of transversals. Therefore there exist a countable covering of $S$, $S=\bigcup_i S_i$, by transversals such that, for each $i\in\N$, there exists a holonomy map $h_i$ with domain $S_i$  and image contained in $T$ (since $T$ is a complete transversal). Each $h_i$ induce a tangential deformation $H^i:S_i\times \R\to\FF$ such that $H^i(S_i\times\{1\})=h_i(S_i)$. By Lemma~\ref{l:measure finite in compact sets}, we can suppose that $\LB(\partial S_i)=0$ for $i\in\N$.

Now, take $T_1=S_1$ and define recursively $T_n=\intr(V_n\setminus\bigcup_{i=1}^{n-1}S_i)$ and $K=U\setminus\bigcup_n T_n$. Of course, $K$ is closed in $S$ and it is null transverse since $K\subset\bigcup_i\partial S_i$. Finally, we obtain the partition by taking $U_n=H(-,1)^{-1}(T_n)$  and $F=H(-,0)^{-1}(K)$. The tangential contraction $H$ for $\bigcup_n U_n$ is defined as follows:
$$
H(x,t)=
\begin{cases}
G(x,2t) & \text{if $t\leq \frac{1}{2}$}\\
H^i(G(x,1),2t-1) & \text{if $t\geq\frac{1}{2}$ and $x\in U_i$\;.}\qed
\end{cases}
$$
\renewcommand{\qed}{}
\end{proof}

The way to define deformations used in the above prove will be useful in other sections of this work. This idea is specified by defining the following homotopy operation.

\begin{defn}\label{d:operation homotopy}
Let $H:X\times \R\to Y$ and $G:Z\times \R\to Y$ be tangential deformations (hence it is supposed that $X,Z\subset Y$, and $H(-,0)$ and $G(-,0)$ are inclusion maps) such that $H(X\times\{1\})\subset Z$. Then let $H*G$ be the the tangential deformation of $X$ defined by:
$$
H*G(x,t)=
\begin{cases}
H(x,2t) & \text{if $t\leq \frac{1}{2}$}\\
G(H(x,1),2t-1) & \text{if $t\geq\frac{1}{2}$\;.}
\end{cases}
$$
\end{defn}

\section{\LB-category of compact-Hausdorff laminations}

In this section, we compute the \LB-category of a lamination $(X,\FF)$ with all leaves compact and Hausdorff leaf space. Suppose that \LB\ is finite on compact sets.

In this setting, there is a nice description for the local
dynamics \cite{Epstein}. For a leaf $L$, there exists a
(topological) transversal such that there is an open foliated embedding of a suspension
$i_L:\widetilde{L}\times_{h_L} U_L\to (X,\FF)$, where $h_L:\pi_1(L)\times U_L\to U_L$ is an action defining the holonomy of $L$, with finite orbits and Hausdorff orbit space. A tranversal satisfying these conditions
for some leaf will be called a {\em slice\/}. We
omit $i_L$ in the notations for the sake of simplicity. The set of saturations of slices is a base of the saturated topology where the open sets of this topology is the saturation of open sets.

For compact laminations, it is also true that a transverse invariant measure induces a measure in the leaf space, which is defined in the following way. For each measurable subset $B\subset X/\FF$, there exists a transverse set $T\subset X$ such that $\pi(T)=B$ and $\pi_T:T\to B$ is a Borel isomorphism, where $\pi:X\to X/\FF$ is the canonical projection. Let $\LB_\FF(B)=\LB(T)$, which does not depend on the choice of such $T$ \cite{Edwards-Millett-Sullivan,Takesaki,Menino3}, and defines a measure $\LB_\FF$ on $X/\FF$.

\begin{rem}
If \LB\ is finite on compact sets, then, for any \sg-compact set
$K$ in a transversal, we have
$$\LB(K)=\infim\{\,\LB(V)\mid V\ \text{is open},\ K\subset V\,\}\;.$$
This means that \LB\ is {\em externally regular\/} on \sg-compact sets.
\end{rem}

\begin{prop}
For any leaf $L$, there exists a slice $U_L$ such
that $\LB(\partial U_L)=0$.
\end{prop}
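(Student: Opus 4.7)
The strategy is to start from an arbitrary slice supplied by Epstein's description of compact-Hausdorff laminations and shrink it to a sub-slice whose boundary has vanishing transverse measure, applying the pigeonhole argument of Lemma~\ref{l:measure finite in compact sets} after averaging the ambient metric by the (finite) holonomy group so that the shrinking stays compatible with the $\pi_1(L)$-action.

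Concretely, I would first fix any slice $V_L$ and a point $x_0\in V_L$ lying in $L$. Shrinking $V_L$ if necessary, Epstein's theorem guarantees that the image $H_L$ of $h_L:\pi_1(L)\to\Homeo(V_L)$ is a finite group. Picking any metric $d$ compatible with the topology of $V_L$ and replacing it by its $H_L$-average
\[
\tilde d(y,z)=\sum_{g\in H_L}d(gy,gz),
\]
one obtains an $H_L$-invariant metric still compatible with the topology. For a relatively compact open neighborhood $S_0$ of $x_0$ in $V_L$, its $H_L$-saturation $S=\bigcup_{g\in H_L}gS_0$ is then an $H_L$-invariant, relatively compact open neighborhood of the finite orbit $H_L\cdot x_0\subset L$.

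Finally, I would apply the proof of Lemma~\ref{l:measure finite in compact sets} to the $\tilde d$-tubular neighborhoods $S^\varepsilon=\{y\in V_L:\tilde d(y,S)<\varepsilon\}$: fix $\delta>0$ small enough that $\overline{S^\delta}$ is compact inside $V_L$; since $\LB(\overline{S^\delta})<\infty$ and the boundaries $\partial S^\varepsilon$ for $\varepsilon\in(0,\delta)$ are pairwise disjoint, only countably many can carry positive $\LB$-mass, so some $\varepsilon\in(0,\delta)$ satisfies $\LB(\partial S^\varepsilon)=0$. Setting $U_L:=S^\varepsilon$, the $H_L$-invariance of $\tilde d$ and $S$ makes $U_L$ an $H_L$-invariant open neighborhood of $H_L\cdot x_0$, so restricting the foliated open embedding $\widetilde L\times_{h_L}V_L\hookrightarrow X$ to $\widetilde L\times_{h_L}U_L$ exhibits $U_L$ as a slice with $\LB(\partial U_L)=0$. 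The only delicate point is coordinating the steps so that the measure-theoretic shrinking preserves the slice structure; averaging the metric over the finite group $H_L$ is precisely what makes this compatibility automatic.
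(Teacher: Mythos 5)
Your proposal is correct and follows essentially the same route as the paper: both start from a slice given by Epstein's local structure, consider the family of $\varepsilon$-neighborhoods, and invoke the disjoint-boundaries pigeonhole argument of Lemma~\ref{l:measure finite in compact sets} to find one whose boundary is $\LB$-null. The paper simply asserts that the $\varepsilon$-neighborhoods remain slices for small $\varepsilon$, whereas you justify this by averaging the metric over the finite holonomy group --- a worthwhile detail, but not a different method.
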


\begin{proof}
Consequence of Lemma~\ref{l:measure finite in compact sets} and the fact that, according to notation of Lemma~\ref{l:measure finite in compact sets}, there exists an slice $U_L$ for each $L\in\FF$ and $\delta>0$ (depending on $L$) such that $U_L^\varepsilon$ is an slice for $0<\varepsilon<\delta$.
\end{proof}

\begin{thm}\label{t:topologicalcompact}
Let $(X,\FF,\LB)$ be a lamination with all leaves compact,
Hausdorff leaf space, and a transverse invariant measure that is finite on
compact sets. Then
$$\Cat(\FF,\LB)=\int_{X/\FF}\Cat(L)\,d\LB_\FF(L)\;.$$
\end{thm}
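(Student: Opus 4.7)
The plan is to prove the two inequalities separately.

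\textbf{Lower bound.} Let $\{U_n\}$ be any $\FF$-categorical open cover of $X$ with tangential contractions $H_n$ yielding transversals $T_n=H_n(U_n\times\{1\})$. For any leaf $L$ of $\FF$, restricting $H_n$ to $(U_n\cap L)\times[0,1]$ gives a homotopy in $L$ from the inclusion to a map constant on each component of $U_n\cap L$; since $L$ is path-connected, composing with paths to a common point shows that any non-empty $U_n\cap L$ is $L$-categorical. Therefore $\{U_n\cap L:U_n\cap L\neq\emptyset\}$ is a covering of $L$ by $L$-categorical open sets, of cardinality at least $\Cat(L)$. Since $|T_n\cap L|\geq 1$ whenever $U_n\cap L\neq\emptyset$, we get $\sum_n|T_n\cap L|\geq\Cat(L)$. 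In a compact-Hausdorff lamination, any transverse set splits into a countable disjoint union of Borel sections of the leaf space, which, combined with the definition of $\LB_\FF$, yields the Fubini-type identity $\LB(T)=\int_{X/\FF}|T\cap L|\,d\LB_\FF(L)$. Summing and integrating,
\[
\sum_n\LB(T_n)=\int_{X/\FF}\sum_n|T_n\cap L|\,d\LB_\FF(L)\geq\int_{X/\FF}\Cat(L)\,d\LB_\FF(L).
\]
Choosing each $H_n$ with $\LB(T_n)\leq\tau_\LB(U_n)+\varepsilon/2^n$ and then taking the infimum over covers gives $\Cat(\FF,\LB)\geq\int\Cat(L)\,d\LB_\FF$.

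\textbf{Upper bound: stratification via upper semicontinuity.} Any leaf in the slice $W_L\cong\widetilde L\times_{h_L}U_L$ is a covering of $L$, and a covering $L'\to L$ satisfies $\Cat(L')\leq\Cat(L)$ (each contractible $V\subset L$ is simply connected, so its preimage splits into disjoint copies contractible in $L'$). Hence $L\mapsto\Cat(L)$ is upper semicontinuous on $X/\FF$, so $\{L:\Cat(L)\leq k\}$ is open and $\{L:\Cat(L)=k\}$ is locally closed and $\sigma$-compact. Fix $\varepsilon>0$. For each $k\geq1$ use the external regularity of $\LB_\FF$ on $\sigma$-compact sets to choose an open saturated set $V_k\subset\pi^{-1}(\{\Cat(L)\leq k\})$ containing $\pi^{-1}(\{\Cat(L)=k\})$ with $\LB_\FF(\pi(V_k))\leq\LB_\FF(\{\Cat(L)=k\})+\varepsilon/(k2^k)$. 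Then $\{V_k\}$ is a countable saturated open cover of $X$ by sets of controlled measure and controlled leaf category.

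\textbf{Sharp suspension bound and assembly.} The main technical step---and the principal obstacle---is a sharpening of Proposition~\ref{p:boundsusp}: for every slice $W_L$ with $\Cat(L)=n$,
\[
\Cat(\FF|_{W_L},\LB|_{W_L})\leq n\,\LB_\FF(\pi(W_L))+\eta
\]
for arbitrary $\eta>0$. The naive bound $n\,\LB(U_L)$ from Proposition~\ref{p:boundsusp} overcounts by the generic size of an $h_L$-orbit, because plaques of the product cover $\{V_i\times U_L\}$ (with $V_1,\ldots,V_n$ a categorical cover of $L$) meet each leaf once per orbit-point. One removes this redundancy by constructing ``twisted'' $\FF$-categorical open sets in $W_L$: starting from each $V_i\times U_L$, apply Lemma~\ref{l:disgregation} and use Lemma~\ref{l:measure finite in compact sets} to arrange null-transverse boundaries, then glue the resulting pieces along holonomy translates coming from a Borel section of $U_L\to U_L/h_L$, so that the final transversal of each assembled open is (modulo a null-transverse set) a Borel section of the leaves meeting $W_L$. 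Granted the sharp bound, cover each $V_k$ by countably many such slices $W_{L_j}$ with $\Cat(L_j)\leq k$ and $\sum_j\LB_\FF(\pi(W_{L_j}))\leq\LB_\FF(\pi(V_k))+\varepsilon/(k2^k)$ (external regularity again), apply the sharp bound inside each slice, Proposition~\ref{p:catsubspaces} to assemble slices into $V_k$, and Proposition~\ref{p:catsubspaces} once more on the cover $\{V_k\}$:
\[
\Cat(\FF,\LB)\leq\sum_k k\,\LB_\FF(\pi(V_k))+O(\varepsilon)\leq\int_{X/\FF}\Cat(L)\,d\LB_\FF(L)+O(\varepsilon).
\]
Letting $\varepsilon\to0$ completes the proof.
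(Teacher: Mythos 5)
Your lower bound is correct, and it is a useful complement to the paper: the paper's displayed proof only establishes the upper bound, and your argument (leafwise categoricity of $U_n\cap L$, plus the Fubini-type identity $\LB(T)=\int_{X/\FF}\#(T\cap L)\,d\LB_\FF(L)$ obtained by splitting a transverse set into countably many Borel sections) is exactly the right way to get the reverse inequality. The problem is the upper bound, which you make hinge entirely on the ``sharp suspension bound'' $\Cat(\FF|_{W_L},\LB|_{W_L})\le n\,\LB_\FF(\pi(W_L))+\eta$. As you yourself note, this is the principal obstacle, but the gluing sketch does not prove it, and it is essentially the theorem itself restricted to the slice: by your own lower bound, $\Cat(\FF|_{W_L},\LB|_{W_L})\ge\int_{W_L/\FF}\Cat(L')\,d\LB_\FF(L')$, so establishing the sharp bound is not easier than proving the theorem on $W_L$. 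The concrete difficulty the sketch glosses over is that a Borel section of $U_L\to U_L/h_L$ is in general not open: the orbit size of $h_L$ is only semicontinuous and jumps on leaves with nontrivial holonomy. An $\FF$-categorical open set $V$ contracts onto the open transverse set $H(V\times\{1\})$, and Lemmas~\ref{l:disgregation} and~\ref{l:measure finite in compact sets} only let you cut this set along null-transverse boundaries and transport the pieces by holonomy; they cannot reduce the multiplicity with which $V$ meets a leaf below the number forced by the topology of $V$, so ``gluing along holonomy translates coming from a Borel section'' does not produce open categorical sets whose final transversal is a section wherever the holonomy is nontrivial.

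The paper resolves precisely this point with the Epstein filtration $\{B_\alpha\}$: it partitions $X$ into countably many strata $F_i$ on which all leaves have trivial holonomy within $F_i$ and are mutually homeomorphic near a given slice, so a slice $U^i_L$ can be chosen to meet each leaf of $F_i$ at most once. On $F_i$ the section \emph{is} open relative to the stratum, Proposition~\ref{p:boundsusp} applies with no overcounting on $S^i_n\cap F_i$, and the overcounting on $S^i_n\setminus F_i$ is absorbed into an error term $(\dim\FF+1)\,\varepsilon/2^i$ using external regularity (the condition $\LB((\bigcup_L U^i_L)\setminus F_i)<\varepsilon/2^i$) together with the dimensional bound $\Cat(L)\le\dim\FF+1$. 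Your stratification by the value of $\Cat(L)$ does not substitute for this: to make the suspension bound sharp you would still have to intersect your strata with the trivial-holonomy strata, at which point the argument becomes the paper's. Either carry out that reduction explicitly, or replace the ``sharp suspension bound'' paragraph by the Epstein-filtration argument.
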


\begin{proof}
Let $\{B_0=X,\dots,B_\alpha,B_{\alpha+1},\dots\}$ be the Epstein filtration of
$(X,\FF)$ \cite{Epstein2}; it is defined by transfinite induction: $B_0\setminus B_1$ is the set of leaves with trivial holonomy, which is open and dense in $X$, and $B_\alpha$ is the set of leaves with non-trivial holonomy in $\bigcap_{\beta<\alpha}B_{\beta}$. This filtration is always countable, even though it may involve infinite ordinals $\alpha$. We obtain a partition of $X$ given by the sets $F_\alpha=B_{\alpha}\setminus B_{\alpha+1}$, which are laminations with all
leaves compact, trivial holonomy and dense in $B_\alpha$. Forgetting the correspondence between the inclusion relation on the family $\{B_\alpha\}$ and the order of the indices $\alpha$, we can change the indices of the family $\{F_\alpha\}$ to denote it by $\{F_i\}_{i\in\N}$. Given any $\varepsilon>0$, for each leaf $L\subset F_i$, choose a slice $U^i_L$ satisfying the following properties:

\begin{enumerate}
\item[(i)] $\overline{U^i_L}$ is compact;

\item[(ii)] $\LB(\partial U^i_L)=0$;

\item[(iii)] $\LB((\bigcup_{L\in F_i/\FF}U^i_L)\setminus
F_i)<\varepsilon/2^i$;

\item[(iv)]  $U^i_L$ meets each leaf of $F_i$ in at most one
point. The leaves of $F_i$ meeting $U^i_L$ are homeomorphic to
each other.
\end{enumerate}

Property (i) is given by the local compactness. Properties~(ii)
and (iii) are consequences of the external regularity on
\sg-compact sets, and property (iv) follows from the continuity of
every volume map on $F_i$ (see \cite{Epstein} for details). By the
Lindel\"off property, there exist leaves $L^i_1,\dots,L^i_n,\dots$ in
$F_i$ so that the family $\{L^i_n\times_{h^i_{L_n}}
U_{L^i_n}\}_{i,n\in\N}$ covers $X$. By induction on $n$, define
$A^i_1=L^i_1\times_{h_{L^i_1}}U_{L^i_1}$ and
$$
A^i_n=(L^i_n\times_{h_{L^i_n}}U_{L^i_n})\setminus(\overline{A^i_1}\cup\dots\cup
\overline{A^i_{n-1}})\;,\quad n>1\;.
$$
The families $\{A^i_n\}_{n\in\N}$ consist of disjoint open sets that cover
$F_i$ up to the saturation of a null-transverse set. Each $A^i_n$ is a suspension
with base $L^i_n$ and transverse fiber
$S^i_n=U_{L^i_n}\setminus(\overline{A^i_1}\cup\dots\cup\overline{A^i_{n-1}})$.
From these properties, it follows that
\begin{enumerate}

\item[(i)] $\LB(\bigcup_{n=1}^\infty S^i_n\cap
F_i)=\LB_\FF(F_i/\FF)$,

\item[(ii)] $\LB((\bigcup_{n=1}^\infty S^i_{n})\setminus
F_i)<\varepsilon/2^i$.

\end{enumerate}
Then, by Proposition~\ref{p:boundsusp} and using the dimensional
upper bound of the LS category of manifolds, we get
\begin{align*}
\Cat(\FF,\LB)&\leq\sum_{i=1}^\infty\sum_{n=1}^\infty\Cat(L^i_n)\cdot\LB(S^i_n)\\
&=\sum_{i=1}^\infty\int_{\bigcup_n S^i_n}\sum_{n=1}^\infty\Cat(L^i_n)\,\chi_{S^i_n}\,d\LB\\
&= \sum_{i=1}^\infty\int_{\bigcup_n S^i_{n}\cap
F_i}\sum_{n=1}^\infty\Cat(L^i_n)\,\chi_{S^i_n}\,d\LB \\
&\phantom{=\text{}}+ \sum_{i=1}^\infty\int_{\bigcup_n
S^i_{n}\setminus F_i}\sum_{n=1}^\infty\Cat(L^i_n)\,\chi_{S^i_n}\,d\LB\\
&\leq \sum_{i=1}^\infty\left(\int_{F_i/\FF}\Cat(L)\,d\LB_\FF(L) +
(\dim\FF+1)\cdot\LB\left(\left(\bigcup_{n=1}^\infty S^i_{n}\right)\setminus F_i\right)\right)\\
&<\int_{X/\FF}\Cat(L)\,d\LB_\FF(L) +
(\dim\FF+1)\cdot\varepsilon\;.\qed
\end{align*}
\renewcommand{\qed}{}
\end{proof}

\section{The positivity or nullity of the \LB-category is a transverse invariant}\label{s:invariance}
In this section, we show that the positivity or nullity of the \LB-category depends only on the holonomy pseudogroup and the transverse invariant measure; in fact, it can be considered as a property of pseudogroups with invariant measures, which is preserved by (measure preserving) equivalences of pseudogroups. We refer to \cite{Haefliger,Walczak} for the basic notions on pseudogroups. We recall here the definition of pseudogroup of homeomorphisms, equivalence of pseudogroups, and a definition of a good set of generators. Then we define the \LB-category of pseudogroups with invariant measures, showing that its nullity or positivity is preserved by (measure preserving) equivalences of pseudogroups, and it gives a lower bound for the \LB-category of foliations when the holonomy pseudogroup is considered.

First of all, we recall a result of the theory of topological dimension due to Milnor (see e.g. \cite{James}), which will be useful in the following sections of the paper.

\begin{prop}[Dimensional trick \cite{James}]\label{p:dimensional trick}
Let $X$ be a paracompact space of finite topological dimension and let $\UU$ be an open covering of $X$. Then there exists a covering $\{V_0,\dots,V_{\dim X}\}$ of $X$ such that each $V_i$ is a union of a countable family of disjoint open sets $V_{ij}$ and the open covering $\{V_{ij}\}$ is a refinement of $\UU$.
\end{prop}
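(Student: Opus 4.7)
The plan is to use the standard Milnor partition-of-unity argument together with the definition of covering dimension. First, since $\dm X=n$, there is an open refinement $\WW=\{W_\alpha\}$ of $\UU$ of order at most $n+1$; that is, every $x\in X$ lies in at most $n+1$ sets of $\WW$. Paracompactness lets me take $\WW$ locally finite; in the second countable/Lindel\"of settings of interest (laminations on Polish spaces) $\WW$ can moreover be taken countable. Let $\{\phi_\alpha\}$ be a partition of unity subordinate to $\WW$.

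Next, for each $k\in\{0,1,\dots,n\}$ and each $(k+1)$-subset $S$ of the index set of $\WW$, I would define
\[
  V_S=\bigl\{\,x\in X\mid \minim_{\alpha\in S}\phi_\alpha(x)>\maxim_{\beta\notin S}\phi_\beta(x)\,\bigr\}\;.
\]
Local finiteness of $\{\supp\phi_\alpha\}$ makes the min and the max locally finite, so $V_S$ is open. Since $\phi_\alpha(x)>0$ for every $\alpha\in S$ when $x\in V_S$, we have $V_S\subset\bigcap_{\alpha\in S}W_\alpha$, so $V_S$ is contained in some element of $\UU$. I would then set $V_k=\bigcup_{|S|=k+1}V_S$, so that each $V_k$ is a countable union of open sets $V_{kj}:=V_S$ indexed by the $(k+1)$-subsets of the countable $\WW$, and the family $\{V_{kj}\}$ refines $\UU$.

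The key step is that the pieces $V_S$ comprising a single $V_k$ are pairwise disjoint. If $S\ne S'$ are distinct $(k+1)$-subsets, I pick $\alpha\in S\setminus S'$ and $\alpha'\in S'\setminus S$. A putative point $x\in V_S\cap V_{S'}$ would satisfy, by membership in $V_S$,
\[
  \phi_\alpha(x)\ge\minim_{\beta\in S}\phi_\beta(x)>\maxim_{\gamma\notin S}\phi_\gamma(x)\ge\phi_{\alpha'}(x)\;,
\]
and, by symmetry from $V_{S'}$, the reversed strict inequality $\phi_{\alpha'}(x)>\phi_\alpha(x)$, a contradiction. Covering is then immediate: for any $x\in X$ the set $S(x)=\{\alpha\mid \phi_\alpha(x)>0\}$ is nonempty of cardinality $k+1$ for some $0\le k\le n$, and trivially $x\in V_{S(x)}\subset V_k$.

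The main (and essentially only) obstacle is the disjointness argument above, which rests on the symmetric inequality trick between two distinct ``dominating'' subsets; the remaining points are bookkeeping about partitions of unity and about reducing to a refinement of order $\le n+1$, which is the very definition of $\dm X=n$. A minor but necessary technical remark is that the countability claim on $\{V_{kj}\}_j$ uses the (implicit) Lindel\"of property of $X$ to ensure that the initial refinement $\WW$ can be chosen countable.
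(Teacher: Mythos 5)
Your proof is correct and is precisely the standard Milnor partition-of-unity argument that the cited reference \cite{James} uses; the paper itself states this proposition without proof, so there is nothing to contrast it with. You are also right to flag that the countability of each family $\{V_{ij}\}_j$ is not automatic from paracompactness alone (an uncountable discrete space with the cover by singletons defeats the literal statement) and needs the Lindel\"of property to choose the initial order-$(\dim X+1)$ refinement countable, which holds in the paper's setting of Polish transversals.
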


\begin{defn}[Pseudogroup]
Let $T$ be a topological space. Let $\Gamma$ be a set of homeomorphisms between open subsets of $T$ such that $\id_T\in \Gamma$. It is said that $\Gamma$ is a {\em pseudogroup\/} if it is closed by the operations of composition (wherever defined), inversion, restriction to open sets and combination.
\end{defn}

Pseudogroups are generalizations of groups of transformations (or actions, or dynamical systems), and the most basic dynamical concepts have obvious versions for pseudogroups: orbits, saturated sets, etc. The restriction of a pseudogroup $\Gamma$ on $T$ to an open subset $U\subset T$ is the pseudogroup that consists of all elements of $\Gamma$ whose domain and image is contained in $T$. We suppose for the rest of the section that $T$ is a paracompact space of finite topological dimension and $\Gamma$ is finitely generated.

\begin{defn}[Equivalence of pseudogroups]
Let $\Gamma$ and $\Gamma'$ be pseudogroups of local homeomorphisms of topological spaces $T$ and $T'$, respectively. An ({\em \'etal\'e\/}) {\em morphism\/} $\Phi:\Gamma\to\Gamma'$ is a set of homeomorphisms from open sets of $T$ to open sets of $T'$ such that $\phi\circ g\circ \psi^{-1}\in\Gamma'$ for all $\phi,\psi\in\Phi$ and $g\in\Gamma$. $\Phi$ is called an {\em equivalence\/} if $\Phi^{-1}=\{\,\phi^{-1}\mid\phi\in\Phi\,\}$ is a morphism $\Gamma'\to\Gamma$; in this case, $\Gamma$ and $\Gamma'$ are said to be {\em equivalent\/}. The existence of an equivalence between $\Gamma$ and $\Gamma'$ is equivalent to the existence of a pseudogroup $\Gamma''$ on $T\sqcup T'$ such that $T$ and $T'$ meet all $\Gamma''$-orbits, and the restrictions of $\Gamma''$ to $T$ and $T'$ are $\Gamma$ and $\Gamma'$.
\end{defn}

\begin{defn}
Let $S$ be a symmetric set of generators of $\Gamma$. For each $n\in\N$, let $S_n$ denote the set of compositions of $n$ elements in $S$, and let $S_\infty=\bigcup_n S_n$. A {\em deformation\/} of $U$ is a map $h:U\to T$ that is combination of maps $h_i:U_i\to T$ in $S_\infty$ restricted to disjoint open sets; we may use the notation $h\equiv(h_i)$. It is said that $U$ is {\em deformable\/} if there is a deformation of $U$. The pairs $(U_i,h_i)$ are called {\em components\/} of $h$.
\end{defn}

Let $\Lambda$ be a $\Gamma$-invariant measure on $T$.

\begin{defn}
The \LB-category of a pseudogroup with an invariant measure, $(T,\Gamma,\LB)$, and a symmetric set of generators $S$ is defined as
$$
\Cat(\Gamma,\LB,S)=\inf_{\UU,h^U}\sum_{U\in\UU}\LB(h^{U}(U))\;,
$$
where $\UU$ runs in the collection of open coverings of $T$ by deformable sets, and $h^U\equiv(h^{U}_i)$ runs in the family of deformations of each $U\in\UU$.
\end{defn}

\begin{prop}\label{p:Cat(Gamma,LB,S)=0}
$\Cat(\Gamma,\LB,S)=0$ if and only if, for any $\varepsilon>0$, there exists an open set $U\subset T$ meeting any $\Gamma$-orbit and $\LB(U)<\varepsilon$.
\end{prop}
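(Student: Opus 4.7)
The plan is to prove both implications by constructing explicit witnesses. The nontrivial direction is the ``if'' direction, where I would use Proposition~\ref{p:dimensional trick} to convert the single small open transversal that meets every orbit into finitely many deformable pieces whose deformation images all land inside that same small set.

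For the ``only if'' direction, suppose $\Cat(\Gamma,\LB,S)=0$ and fix $\varepsilon>0$. Then choose a covering $\UU=\{U_\alpha\}$ of $T$ by deformable sets with deformations $h^\alpha\equiv(h^\alpha_i)$ satisfying $\sum_\alpha\LB(h^\alpha(U_\alpha))<\varepsilon$. Let $W=\bigcup_\alpha h^\alpha(U_\alpha)$. Each $h^\alpha_i\in S_\infty\subset\Gamma$ is a homeomorphism between open subsets of $T$, so $W$ is open, and $\LB(W)<\varepsilon$ by subadditivity. For any $t\in T$, there is some $\alpha$ and some index $i$ with $t\in U^\alpha_i$, so $h^\alpha(t)=h^\alpha_i(t)\in W$ lies in the same $\Gamma$-orbit as $t$. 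Hence $W$ meets every orbit.

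For the ``if'' direction, fix $\varepsilon>0$ and choose an open set $U$ meeting every $\Gamma$-orbit with $\LB(U)<\varepsilon$. For each $h\in S_\infty$, set $V_h=h^{-1}(U\cap\image(h))$, which is open in $\dom(h)$. Since $U$ meets every orbit and every element of $\Gamma$ is (a combination of restrictions of) elements of $S_\infty$, the family $\{V_h\}_{h\in S_\infty}$ is an open cover of $T$. Apply Proposition~\ref{p:dimensional trick} with $d=\dim T$ to obtain a cover $\{V_0,\ldots,V_d\}$ such that each $V_k=\bigsqcup_j V_{kj}$ is a disjoint union of open sets, and each $V_{kj}$ is contained in some $V_{h(k,j)}$. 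Define a deformation $h^{V_k}\equiv(h(k,j)|_{V_{kj}})$ of $V_k$; this is admissible because the $V_{kj}$ are disjoint and each $h(k,j)\in S_\infty$. By construction $h(k,j)(V_{kj})\subset U$, so $h^{V_k}(V_k)\subset U$ and therefore $\LB(h^{V_k}(V_k))\le\LB(U)<\varepsilon$. Using the covering $\{V_0,\ldots,V_d\}$ in the definition of $\Cat(\Gamma,\LB,S)$ yields
\[
\Cat(\Gamma,\LB,S)\le\sum_{k=0}^{d}\LB(h^{V_k}(V_k))\le(d+1)\,\varepsilon\;.
\]
Since $\varepsilon$ was arbitrary and $d$ is fixed, $\Cat(\Gamma,\LB,S)=0$.

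The only genuine technical point is the ``if'' direction: the naive cover of $T$ by $\{V_h\}_{h\in S_\infty}$ has infinitely many members, so one cannot just add up their measures; the dimensional trick is exactly what collapses this infinite cover into $d+1$ deformable pieces, with disjointness of the $V_{kj}$ ensuring that the combined map is still a deformation in the sense of the preceding definition.
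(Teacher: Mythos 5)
Your proof is correct and follows essentially the same route as the paper: the ``only if'' direction is the easy observation that the union of the deformation images is a small open set meeting every orbit, and the ``if'' direction applies the dimensional trick (Proposition~\ref{p:dimensional trick}) to refine the cover $\{V_h\}_{h\in S_\infty}$ into $\dim T+1$ deformable sets whose deformations land inside $U$. You merely spell out the details (construction of the sets $V_h$, the combination of the $h(k,j)$ over the disjoint pieces $V_{kj}$) that the paper leaves implicit.
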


\begin{proof}
It is clear that $\Cat(\Gamma,\LB,S)=0$ means the existence of such open sets. To prove the reciprocal, let $U$ be an open set meeting any $\Gamma$-orbit and such that $\LB(U)<\varepsilon$. By Proposition~\ref{p:dimensional trick}, there exists a covering $\{U_0,\dots,U_{\dim T}\}$ of $T$ by deformable open sets in $U$ and deformations $h_i:U_i\to T$ such that $(h_i(U_i))\subset U$. Therefore $\Cat(\FF,\LB)\leq(\dim X + 1)\varepsilon$.
\end{proof}

\begin{cor}\label{c:independent of S}
The nullity or positivity of $\Cat(\Gamma,\LB,S)$ is independent of the choice of $S$.
\end{cor}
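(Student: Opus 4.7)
The plan is to read off the corollary immediately from Proposition~\ref{p:Cat(Gamma,LB,S)=0}. That proposition characterises the vanishing of $\Cat(\Gamma,\LB,S)$ by a purely measure-theoretic and dynamical condition: the existence, for every $\varepsilon>0$, of an open set $U\subset T$ that meets every $\Gamma$-orbit and satisfies $\LB(U)<\varepsilon$. Nothing in this condition mentions the generating set $S$; the $\Gamma$-orbits are an intrinsic feature of the pseudogroup, and both the topology and the measure $\LB$ on $T$ are fixed.

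Concretely, let $S$ and $S'$ be two symmetric sets of generators for $\Gamma$ and suppose $\Cat(\Gamma,\LB,S)=0$. Apply the ``only if'' direction of Proposition~\ref{p:Cat(Gamma,LB,S)=0} (with $S$) to obtain, for each $\varepsilon>0$, an open set $U_\varepsilon$ which meets every $\Gamma$-orbit and has $\LB(U_\varepsilon)<\varepsilon$. This family of open sets satisfies the hypothesis of the ``if'' direction of the same proposition applied with $S'$; hence $\Cat(\Gamma,\LB,S')=0$. Swapping the roles of $S$ and $S'$ yields the converse implication, so nullity is independent of the choice of symmetric generating set. Since positivity is just the negation of nullity, it is independent as well.

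There is no real obstacle here: the content has been concentrated in Proposition~\ref{p:Cat(Gamma,LB,S)=0}, whose proof (via the dimensional trick of Proposition~\ref{p:dimensional trick}) already absorbs any dependence on $S$ by covering $T$ with at most $\dim T +1$ deformable subsets of $U_\varepsilon$. Once that proposition is in hand, the corollary is a one-line observation, and the only thing worth underlining in the write-up is that the equivalent condition in Proposition~\ref{p:Cat(Gamma,LB,S)=0} does not involve $S$ at all.
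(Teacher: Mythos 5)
Your proposal is correct and follows exactly the paper's own argument: the paper's proof of this corollary is the one-line observation that Proposition~\ref{p:Cat(Gamma,LB,S)=0} characterizes nullity by a condition on open sets meeting all orbits which makes no reference to $S$. You have merely spelled out the two directions explicitly, which is fine.
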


\begin{proof}
By Proposition~\ref{p:Cat(Gamma,LB,S)=0}, we have a characterization of $\Cat(\FF,\LB,S)$ by a condition on the measure of open sets that meet all orbits, which does not depend on the choice of generators.
\end{proof}

According to Corollary~\ref{c:independent of S},  instead of $\Cat(\mathcal{H},\LB,S)$, the notation $\Cat(\Gamma,\LB)$ will be for the conditions $\Cat(\Gamma,\LB)=0$ or $\Cat(\Gamma,\LB)>0$, which makes sense without any reference to the system of generators.

\begin{cor}\label{c:invariant by equivalences}
If $\LB$ is a regular measure, then the \LB-category of finitely generated pseudogroups on paracompact spaces of finite topological dimension is invariant by measure preserving equivalences.
\end{cor}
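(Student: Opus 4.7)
The plan is to reduce the corollary to the characterization given by Proposition~\ref{p:Cat(Gamma,LB,S)=0}: the \LB-category vanishes exactly when there exist open sets of arbitrarily small measure meeting every orbit. By the symmetric roles of $\Gamma$ and $\Gamma'$ in the equivalence, it suffices to prove that $\Cat(\Gamma,\LB)=0$ implies $\Cat(\Gamma',\LB')=0$. The core idea is to transport a small open orbit-transversal $U\subset T$ into $T'$ by means of $\Phi$, using the dimensional trick (Proposition~\ref{p:dimensional trick}) to keep the total transported $\LB'$-measure comparable to $\LB(U)$.

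Fix $\varepsilon>0$ and choose, via Proposition~\ref{p:Cat(Gamma,LB,S)=0}, an open $U\subset T$ meeting every $\Gamma$-orbit with $\LB(U)<\varepsilon$. By the equivalent description of an equivalence in terms of a pseudogroup $\Gamma''$ on $T\sqcup T'$, the families $\{\dom(\phi)\}_{\phi\in\Phi}$ and $\{\image(\phi)\}_{\phi\in\Phi}$ cover $T$ and $T'$, respectively. Apply the dimensional trick to the cover of $T$ by $\{\dom(\phi)\}_{\phi\in\Phi}$, obtaining a refinement $\{W_{ij}\}_{0\le i\le \dm T,\,j\in\N}$ such that, for each fixed $i$, the sets $W_{ij}$ are pairwise disjoint, and each is contained in $\dom(\phi_{ij})$ for some chosen $\phi_{ij}\in\Phi$. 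Define
\[
U'_i\;=\;\bigcup_{j\in\N}\phi_{ij}(U\cap W_{ij})\subset T'\;,\qquad U'\;=\;\bigcup_{i=0}^{\dm T}U'_i\;,
\]
which are open in $T'$.

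The measure bound combines subadditivity, the measure-preservation identity $\LB'(\phi_{ij}(A))=\LB(A)$, and the disjointness of $\{W_{ij}\}_{j}$ for each fixed $i$:
\[
\LB'(U'_i)\;\le\;\sum_{j}\LB'(\phi_{ij}(U\cap W_{ij}))\;=\;\sum_{j}\LB(U\cap W_{ij})\;=\;\LB\!\Bigl(\bigsqcup_{j}(U\cap W_{ij})\Bigr)\;\le\;\LB(U)\;<\;\varepsilon\;,
\]
so $\LB'(U')\le(\dm T+1)\varepsilon$. Next, $U'$ meets every $\Gamma'$-orbit: given $y\in T'$, pick $\phi\in\Phi$ with $y\in\image(\phi)$; the $\Gamma$-orbit of $\phi^{-1}(y)$ meets $U$ through some $g\in\Gamma$, and $g(\phi^{-1}(y))\in W_{ij}$ for some $i,j$; then $\phi_{ij}(g(\phi^{-1}(y)))\in U'$ lies in the $\Gamma'$-orbit of $y$, since $\phi_{ij}\circ g\circ \phi^{-1}\in\Gamma'$. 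Letting $\varepsilon\to 0$ and applying Proposition~\ref{p:Cat(Gamma,LB,S)=0} to $(\Gamma',\LB')$ gives $\Cat(\Gamma',\LB')=0$.

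The main subtlety is precisely the measure estimate above. Without the dimensional trick, several $\phi_{ij}$'s might map overlapping regions of $T$ onto disjoint regions of $T'$, so a naive bound $\sum_j \LB'(\phi_{ij}(U\cap\dom(\phi_{ij})))$ could explode. Disjointness of $\{W_{ij}\}_j$ for fixed $i$ is exactly what tames this, at the modest cost of the multiplicative factor $\dm T+1$, which is harmless for a statement about nullity. Regularity of $\LB$ is used to ensure that $\LB'(\phi(A))=\LB(A)$, which is natural on open sets for a measure-preserving morphism, persists for the Borel sets $U\cap W_{ij}$ that enter the estimate.
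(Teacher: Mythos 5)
Your proof is correct and takes essentially the same route as the paper: both reduce the corollary to the characterization of nullity in Proposition~\ref{p:Cat(Gamma,LB,S)=0} and then transport an open set of arbitrarily small measure meeting every orbit across the equivalence. The paper compresses the transport step into the phrase that it ``easily follows from regularity,'' whereas you carry it out explicitly (using the dimensional trick to control overlaps of the domains of $\Phi$); this is a legitimate filling-in of the paper's elided detail rather than a genuinely different argument.
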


\begin{proof}
From the regularity of the measures, it easily follows that the existence of an open subset meeting all orbits with arbitrarily small measure is invariant by measure preserving psudogroup equivalences. So the result follows from Proposition~\ref{p:Cat(Gamma,LB,S)=0}.
\end{proof}

Consider a foliated manifold $(M,\FF)$ with a transverse invariant measure $\Lambda$, and let $\Gamma$ denote its holonomy pseudogroup acting on a complete transversal $T$ associated to a regular foliated atlas $\UU$ of $\FF$. A symmetric set of generators, $E^\UU$, is given by all holonomy maps induced by any non-empty intersection of a pair of charts in $\UU$.

\begin{prop}\label{p:inequalitycategories}
Let $\LB$ be a measure that is finite on compact sets and $\Gamma$-invariant on $T$. Then $\Cat(\Gamma,\LB,E^\UU)\le\Cat(\FF,\LB)$.
\end{prop}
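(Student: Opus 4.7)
The plan is to take a near-optimal $\FF$-categorical cover of $M$ and manufacture from it a $\Gamma$-deformable open cover of $T$ whose total $\Gamma$-deformation measure is essentially bounded by that of the original.

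Fix $\varepsilon>0$ and choose a countable cover $\{U_n\}$ of $M$ by $\FF$-categorical opens with tangential contractions $G^n$ satisfying $\LB(G^n(U_n\times\{1\}))\le\tau_\LB(U_n)+\varepsilon/2^n$ and $\sum_n\tau_\LB(U_n)\le\Cat(\FF,\LB)+\varepsilon$. Apply Lemma~\ref{l:disgregation} to each pair $(U_n,G^n)$ with the complete transversal $T$: this yields a partition $U_n=F_n\sqcup\bigsqcup_k U_{n,k}$, with $F_n$ closed and null-transverse and each $U_{n,k}$ open, together with a tangential homotopy $H^n$ into $T$ that, on each $U_{n,k}$, is $G^n(\cdot,1)$ followed by a holonomy map $h_{n,k}$. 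The internal construction in that proof also makes the images $G^n(U_{n,k}\times\{1\})$ pairwise disjoint inside $G^n(U_n\times\{1\})$.

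Next I convert the time-$1$ map $H^n(\cdot,1)$ into honest elements of $S_\infty$. For each $x\in U_{n,k}\cap T$, the leafwise path $t\mapsto H^n(x,t)$ is compact and hence covered by a finite chain of charts of $\UU$; the resulting composition of chart-to-chart holonomies is an element $g\in S_\infty$, and by continuity of $H^n$ the same chain works on an open neighborhood of $x$ in $U_{n,k}\cap T$, where $H^n(\cdot,1)$ coincides with $g$. Lindel\"of gives a countable open cover $\{W^{n,k}_j\}_j$ of $U_{n,k}\cap T$ with $H^n(\cdot,1)|_{W^{n,k}_j}=g^{n,k}_j\in S_\infty$, and Lemma~\ref{l:measure finite in compact sets} lets me shrink and disjointify these into opens $\{W'^{n,k}_j\}$ with $\LB$-null boundaries. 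Setting $\hat U_n:=\bigsqcup_{k,j}W'^{n,k}_j$ and $\hat h^n\equiv(g^{n,k}_j)$ then produces a deformable open subset of $T$ with a $\Gamma$-deformation.

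The measure estimate is the core computation:
\[
\LB(\hat h^n(\hat U_n))\le\sum_k\LB(h_{n,k}(G^n(U_{n,k}\times\{1\})))=\sum_k\LB(G^n(U_{n,k}\times\{1\}))\le\LB(G^n(U_n\times\{1\})),
\]
using holonomy invariance of $\LB$ and the disjointness just noted. Summing over $n$ gives $\sum_n\LB(\hat h^n(\hat U_n))\le\Cat(\FF,\LB)+2\varepsilon$. The family $\{\hat U_n\}$ covers $T$ except on a $\LB$-null transverse set $B$ consisting of the $F_n\cap T$'s together with the disjointification boundaries; outer regularity of $\LB$ on $T$ (standard for Borel measures finite on compact sets on Polish spaces) lets me cover $B$ by an open $O\subset T$ with $\LB(O)<\varepsilon$, using $\id_O\in S_1$ as its deformation. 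Letting $\varepsilon\to 0$ yields $\Cat(\Gamma,\LB,E^\UU)\le\Cat(\FF,\LB)$.

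The main obstacle is the step that replaces $H^n(\cdot,1)$ by a locally-constant combination of elements of $S_\infty$: regularity of the atlas $\UU$ is essential here, with compactness of each leafwise path supplying finiteness and continuity of $H^n$ ensuring that the chosen chain of charts persists on full neighborhoods in $T$. Everything else is bookkeeping on top of Lemmas~\ref{l:disgregation} and~\ref{l:measure finite in compact sets} and the outer regularity of $\LB$.
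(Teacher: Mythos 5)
Your proof is correct and follows the same route as the paper's: take a near-optimal tangentially categorical cover, use Lemma~\ref{l:disgregation} to push the contractions into the complete transversal $T$, and read off a deformable cover of $T$ from the time-$1$ maps, with the measure estimate coming from holonomy invariance. The paper's own proof is a three-line version of exactly this; your additional steps (expressing $H^n(\cdot,1)$ locally as elements of $S_\infty$ via chains of charts, and absorbing the null-transverse leftovers into an open set of small measure by outer regularity) only fill in details the paper leaves implicit.
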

\begin{proof}
Let $\{\,U_n\mid n\in\N\,\}$ be a covering of $M$ by tangentially categorical open sets and $H^n:U_n\times\R\to\FF$ be tangential deformations such that $\sum_n \LB(H(U_i\times\{1\}))<\Cat(\FF,\LB)+\varepsilon$. Without loss of generality, we can suppose that $H^n(U_n\times\{1\})\subset T$ by using Lemma~\ref{l:disgregation}. Hence $\{\,U_n\cap T\mid n\in\N\,\}$ is a covering of $T$ and $H^n(-,1):U_n\cap T\to T$ is a holonomy deformation.
\end{proof}

\begin{rem}
  In Proposition~\ref{p:inequalitycategories}, the reverse inequality does not hold in general, as shown by the following simple example. Consider a one compact leaf foliation; for instance, $S^1$. A complete transversal is given by one point $z\in S^1$, and the corresponding holonomy pseudogroup is $\Gamma=\{\id\}$. Choose the transverse invariant measure $\LB$ given by $\LB(\{z\})=1$. Then $\Cat(\Gamma,\LB)=1$, whilst $\Cat(S^1,\LB)=\Cat(S^1)=2$.
\end{rem}

\begin{prop}\label{p:equalitycategories}
Let $(M,\FF,\LB)$ be a foliated manifold of finite dimension with a transverse invariant measure, and let $\Gamma$ be its holonomy pseudogroup. If $\Cat(\Gamma,\LB)=0$, then $\Cat(\FF,\LB)=0$.
\end{prop}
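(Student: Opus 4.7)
The plan is to translate the smallness statement $\Cat(\Gamma,\LB)=0$ on $T$ into smallness on $M$ by combining Proposition~\ref{p:Cat(Gamma,LB,S)=0} with Milnor's dimensional trick. Fix $\varepsilon>0$; by Proposition~\ref{p:Cat(Gamma,LB,S)=0} applied with the generating set $E^{\UU}$, one gets an open set $U\subset T$ meeting every $\Gamma$-orbit and with $\LB(U)<\varepsilon$. The goal is then to cover $M$ by $\dim M+1$ tangentially categorical open sets, each admitting a tangential contraction with image in $U$.

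The first and most delicate step is to build, for each $x\in M$, an open neighborhood $W_x\ni x$ together with a tangential contraction $H^x\colon W_x\times\R\to M$ whose image at $t=1$ is contained in $U$. Since $T$ is a complete transversal and $U$ meets every $\Gamma$-orbit, the leaf through $x$ hits $U$; a leafwise path from $x$ to a point of $L_x\cap U$ crosses only finitely many plaques of $\UU$ and so yields a composition of elementary holonomies $\gamma_x=e_1\circ\cdots\circ e_m$, defined on a neighborhood in $T$ of the transversal coordinate of $x$. After shrinking this neighborhood so that $\gamma_x$ lands in $U$, a standard plaque-chain construction realises $\gamma_x$ as a tangential homotopy on a flow-box neighborhood $W_x$ of $x$, and prepending a plaquewise retraction of the initial chart to its central transversal turns this into a tangential contraction of $W_x$ with image inside $U$.

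Once the covering $\{W_x\}_{x\in M}$ is in hand, I would invoke the dimensional trick (Proposition~\ref{p:dimensional trick}) on the paracompact, finite-dimensional manifold $M$ to refine it to a covering $\{Z_0,\dots,Z_{\dim M}\}$ with each $Z_j=\bigsqcup_i Z_{ji}$ a disjoint union of open subsets of suitable $W_{x(j,i)}$. Restricting the associated $H^{x(j,i)}$ to $Z_{ji}$ and combining over the disjoint index $i$ gives a tangential contraction of $Z_j$ whose image still lies in $U$; in particular $Z_j$ is tangentially categorical and $\tau_\LB(Z_j)\le\LB(U)<\varepsilon$. The definition of $\Cat(\FF,\LB)$ as an infimum over coverings then yields
$$
\Cat(\FF,\LB)\le\sum_{j=0}^{\dim M}\tau_\LB(Z_j)<(\dim M+1)\,\varepsilon,
$$
and, $\varepsilon$ being arbitrary, this forces $\Cat(\FF,\LB)=0$. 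The main obstacle I anticipate is the plaque-chain construction of $W_x$ and $H^x$ from $\gamma_x$; the rest is bookkeeping, including the trivial observation that restricting a tangential contraction to an open subset is again a tangential contraction, since leaves of $\FF_{Z_{ji}}$ sit inside leaves of $\FF_{W_{x(j,i)}}$.
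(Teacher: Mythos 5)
Your proposal is correct and follows essentially the same route as the paper: both reduce to Milnor's dimensional trick on $M$ applied to a covering by flow boxes that tangentially contract, via holonomy deformations, into a subset of $T$ of measure less than $\varepsilon$, yielding $\Cat(\FF,\LB)\le(\dim M+1)\varepsilon$. The only cosmetic difference is that you first extract a single small open set $U$ meeting all orbits via Proposition~\ref{p:Cat(Gamma,LB,S)=0} and build pointwise plaque-chain contractions, whereas the paper pulls back the covering $\{V^k\}$ of $T$ and its deformations $h^k$ directly through the chart saturations $\str_j(T_j\cap V^k)$.
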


\begin{proof}
Let $\UU=\{U_n\}\in\N$ be a regular foliated atlas, and let $T_j$ be a transversal associated to each foliated chart $U_j\in\UU$. For $\delta>0$, let $\{V^k\}_{k\in\N}$ be an open covering of $T$ and let $h^k\equiv(h^k_i)$ be a deformation of each $V^k$ such that $\sum_k\LB(\bigcup_i h^{k}_i(V^k_i))<\delta$. Let $\str_j(B)$ denote the saturation of $B$ in the chart $U_j\in\UU$. Let $\dim M=m$.

By Proposition~\ref{p:dimensional trick}, there exists a refinement $\BB$ of the covering $\{\str_j(T_j\cap V^k)\}_{j,k}$ such that any point in $M$ is contained in at most $m + 1$ sets in $\BB$, and there exists some point meeting $m + 1$ sets in $\BB$. We can take $\BB$ so that it can be subdivided into $m + 1$ families $\BB_0,\dots,\BB_{\dim M}$ of mutually disjoint open sets. Set $D_i=\bigcup_{B\in\BB_i} B$.

Each connected component of $D_i$ is contained in some of the open sets $\str_j(T_j\cap V^k)$, $1\leq j\leq K$, $k\in\N$. Now, it is easy to define a tangential contraction $H^i$ for each $D_i$ such that $\LB(H^i(D_i\times\{1\})<\delta$. It is enough to define $H^i$ on each connected component of $D_i$ since each connected component is contained in some $\str_j(T_j\cap V^k)$. Choose the minimum $k$ satisfying this condition for some $j$, and, then, choose the minimum $j$ satisfying this condition for that $k$. By connectivity, every connected component of $D_i$ is contained in only one of the open sets $\str_j(T_j\cap V^k_l)$ for $l\in\N$. Hence it can be tangentially contracted into the transversal $V^k_l$, and then it can be deformed to $h^k_l(V^k_l)$ by the tangential homotopy $G^k_l:V^k_l\times[0,1]\to M$ associated to the holonomy map $h^k_l$. With this tangential homotopy, it is clear that $\sum_i\LB(H^i(D_i\times\{1\})<\sum_{k\in\N}\LB(\bigcup_i h^{k}_i(V^k_i))<\delta$. Therefore $\Cat(\FF,\LB)< (m+1)\delta$.
\end{proof}

\begin{rem}
  By Corollary~\ref{c:invariant by equivalences}, the statement of Proposition~\ref{p:equalitycategories} is indeed valid for any representative of the holonomy pseudogroup.
\end{rem}

\begin{cor}\label{c:equalitycategories}
The nullity or positivity of the \LB-category of a foliated manifold of finite dimension with a regular transverse invariant measure $\Lambda$, which is finite on compact sets, is an invariant of the holonomy pseudogroup and the transverse invariant measure.
\end{cor}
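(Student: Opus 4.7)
The plan is to assemble this as a direct consequence of the three preceding results: Proposition~\ref{p:inequalitycategories}, Proposition~\ref{p:equalitycategories}, and Corollary~\ref{c:invariant by equivalences}. First I would observe that the statement has two logically independent parts: (i) that the vanishing of $\Cat(\FF,\LB)$ is determined by the holonomy pseudogroup together with $\LB$, and (ii) that the same holds for its strict positivity. Since these are complementary conditions, it suffices to establish (i).

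The key step is to show the equivalence
\[
  \Cat(\FF,\LB)=0 \iff \Cat(\Gamma,\LB)=0,
\]
where $\Gamma$ is any representative of the holonomy pseudogroup acting on a complete transversal. The forward implication is immediate from Proposition~\ref{p:inequalitycategories}: since $\Cat(\Gamma,\LB,E^\UU)\le\Cat(\FF,\LB)$ for the generating set $E^\UU$ associated to a regular foliated atlas, $\Cat(\FF,\LB)=0$ forces $\Cat(\Gamma,\LB,E^\UU)=0$, and hence $\Cat(\Gamma,\LB)=0$ by Corollary~\ref{c:independent of S}. The reverse implication is exactly the content of Proposition~\ref{p:equalitycategories}. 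Here I should verify that the hypotheses of that proposition are met: the manifold is finite-dimensional, $\LB$ is a transverse invariant measure, and the conclusion $\Cat(\FF,\LB)=0$ is obtained by making the auxiliary $\delta>0$ arbitrarily small.

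Once this equivalence is in hand, the invariance under change of representative follows from Corollary~\ref{c:invariant by equivalences}: any two representatives of the holonomy pseudogroup are equivalent as pseudogroups (by the definition of the holonomy pseudogroup as the class of all such representatives up to equivalence), and the canonical bijection between transverse invariant measures is measure preserving; since $\LB$ is assumed regular, the nullity or positivity of $\Cat(\Gamma,\LB)$ is preserved by any such equivalence. Chaining the two equivalences gives that the condition $\Cat(\FF,\LB)=0$ (equivalently, $\Cat(\FF,\LB)>0$) depends only on the equivalence class of the holonomy pseudogroup together with the induced invariant measure.

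I do not anticipate a real obstacle here: the corollary is essentially a packaging statement. The only thing that deserves a sentence of care is confirming that the regularity hypothesis on $\LB$ descends correctly to the transversal representative, so that Corollary~\ref{c:invariant by equivalences} can be applied, and that finite-dimensionality of $M$ (assumed in Proposition~\ref{p:equalitycategories}) is what guarantees the underlying transversal has finite topological dimension, which is the hypothesis required for Corollary~\ref{c:invariant by equivalences}.
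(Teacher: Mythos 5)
Your proposal is correct and follows exactly the route the paper intends: the equivalence $\Cat(\FF,\LB)=0\iff\Cat(\Gamma,\LB)=0$ obtained by combining Proposition~\ref{p:inequalitycategories} (with Corollary~\ref{c:independent of S}) and Proposition~\ref{p:equalitycategories}, chained with the invariance of the pseudogroup category under measure preserving equivalences from Corollary~\ref{c:invariant by equivalences}. Your hypothesis checks (regularity descending to the transversal, finite dimension of $M$ giving finite topological dimension of $T$) are exactly the points that need verifying, so nothing is missing.
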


\section{Dimensional upper bound}
A topological version of the dimensional upper bound for the tangential category is given in this section. The classical result due to W. Singhof and E. Vogt \cite{Vogt-Singhof} states that, for any $C^2$ lamination \FF\ on a compact manifold, $\Cat(\FF)\leq\dim\FF+1$.

\begin{prop}\label{p:homotopyholonomy}
Let $U$ be a tangentially categorical set, $T$ a complete
topological transversal, $H$ a tangential contraction for $U$ and
$\varepsilon>0$. Suppose that $\LB$ is finite in compact sets. Then there exists
two open sets $V,W$ covering $U$ and
tangential contractions $H^V,H^W$ of each $V$ and $W$ respectively such that
$$\LB(H^V(V\times\{1\}))+\LB(H^W(W\times\{1\}))\leq \LB(T) +
\varepsilon\;$$
\end{prop}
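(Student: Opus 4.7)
The plan is to use Lemma~\ref{l:disgregation} to split $U$ into a piece that deforms cleanly into $T$, absorbing the bulk of the measure, together with a closed null-transverse remnant $F$; the second open set $W$ will then be obtained by enclosing $H(F\times\{1\})$ in a small $\LB$-neighborhood inside $S:=H(U\times\{1\})$, exploiting the external regularity of $\LB$ on $\sg$-compact subsets of transversals.

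First, I would apply Lemma~\ref{l:disgregation} to $U$ and the given complete transversal $T$ to obtain a partition $\{F,U_n\}_{n\in\N}$ of $U$ with $F$ closed and null-transverse, each $U_n$ open, together with a tangential deformation $G:\bigsqcup_n U_n\times\R\to X$ satisfying $G(\bigsqcup_n U_n\times\{1\})\subset T$. Setting $V=\bigsqcup_n U_n$ and $H^V=G$, the set $V$ is open and tangentially categorical (as $G(-,1)$ collapses plaques into the transversal $T$), and $\LB(H^V(V\times\{1\}))\leq\LB(T)$ since $H^V(V\times\{1\})\subset T$.

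Next, I would show that $H(F\times\{1\})$ is null-transverse inside $S$: by Lemma~\ref{l:vogt0}, $S$ is a countable union of local transversals, and $H(-,1)|_F$ is a continuous (hence measurable) leaf-preserving map out of the null-transverse set $F$. Partitioning $F$ by the countable fibers of $H(-,1)|_F$ and combining the invariance of the Connes extension of $\LB$ under leaf-preserving measurable bijections with countable subadditivity then yields $\LB(H(F\times\{1\}))=0$. Since the atlas of $X$ may be taken locally compact, $F$ is closed in the $\sg$-compact open set $U$, hence $\sg$-compact, and so is its continuous image $H(F\times\{1\})$. The external regularity of $\LB$ on $\sg$-compact subsets of transversals (valid because $\LB$ is finite on compact sets) then produces, for each local transversal $S_i$ comprising $S$, an open subset $O_i\subset X$ containing $H(F\times\{1\})\cap S_i$ with $\LB(O_i\cap S_i)<\varepsilon/2^i$. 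Setting $O=\bigcup_i O_i$ and $W=H(-,1)^{-1}(O)$, the set $W$ is open in $X$ by continuity of $H(-,1)$, contains $F$, and the restricted map $H^W:=H|_{W\times\R}$ is a tangential contraction with $H^W(W\times\{1\})\subset O\cap S$; thus $\LB(H^W(W\times\{1\}))<\varepsilon$. Since $V\cup W\supset V\cup F=U$, summing the two bounds gives $\LB(H^V(V\times\{1\}))+\LB(H^W(W\times\{1\}))\leq\LB(T)+\varepsilon$.

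The main obstacle is the claim $\LB(H(F\times\{1\}))=0$: the map $H(-,1)|_F$ need not be injective, so Lemma~\ref{l:invariance1} does not apply directly. One is forced to perform a measurable fiber decomposition of $F$ and combine leaf-preserving invariance with countable subadditivity in order to transfer the null-transverse property of $F$ to its image; the remaining steps are routine consequences of the disgregation lemma and the external regularity of $\LB$.
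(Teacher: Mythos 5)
Your proposal is correct and follows essentially the same route as the paper: apply Lemma~\ref{l:disgregation} to obtain the partition $\{F,V\}$ with $V$ contracting into $T$, observe that $\LB(H(F\times\{1\}))=0$, and use external regularity of $\LB$ on $\sg$-compact sets to enclose $H(F\times\{1\})$ in an open subset of $H(U\times\{1\})$ of measure less than $\varepsilon$ whose preimage under $H(-,1)$ gives $W$. The only difference is that you spell out the fiber-decomposition argument behind $\LB(H(F\times\{1\}))=0$, which the paper simply asserts as an observation.
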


\begin{proof}
Observe that $T_H=H(U\times\{1\})$ is a transverse set (by Lemma~\ref{l:vogt0}); in
fact, for any $x\in U$, there exists an $\FF_U$-saturated open neighborhood $U_x\subset U$ such that $H(U_x\times\{1\})$ is a transversal associated to some foliated chart. Hence, since $T$ is a
complete transversal, by using Lemma~\ref{l:disgregation}, there exists a partition $\{F,V\}$ of $U$ such that $F$ is closed in $U$ and null transverse, and $V$ is an open set so that there exist a tangential contraction $H^V:V\times\R\to \FF$ satisfying $H(V\times\{1\})\subset T$. Now, observe that $\LB(H(F\times\{1\})=0$. Since $\LB$ is externally regular on \sg-compact sets (by the finiteness on compact sets), there exists an open neighborhood $S$ of $H(F\times\{1\})$ in $T_H$ such that $\LB(S)<\varepsilon$. Let $W=H(-,1)^{-1}(S)$. Clearly, these $V$ and $W$ satisfy the stated conditions.
\end{proof}

\begin{thm}
Let $(M,\FF,\LB)$ be a $C^2$ foliated compact manifold with a transverse
invariant measure that is finite in compact sets. Let $T$ be a complete transversal of $\FF$.
Then
$$\Cat(\FF,\LB)\leq (\dm\FF+1)\cdot\LB(T)\;.$$
\end{thm}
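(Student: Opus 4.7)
The plan is to combine the Singhof--Vogt upper bound $\Cat(\FF)\le\dim\FF+1$ with Proposition~\ref{p:homotopyholonomy}, which is designed precisely so that each categorical open set can be rearranged so that its contracted image is essentially contained in the prescribed complete transversal $T$ (up to an arbitrarily small error). The arithmetic then works out in one step.

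First I would invoke Singhof--Vogt \cite{Vogt-Singhof} to obtain a covering $\{U_0,\dots,U_k\}$ of $M$ by $\FF$-categorical open sets with $k+1\le \dim\FF+1$. (If $\dim\FF+1<k+1$ the final estimate is only better, so I will assume $k+1=\dim\FF+1$ without loss of generality.) Fix any $\varepsilon>0$. For each $i\in\{0,\dots,k\}$, apply Proposition~\ref{p:homotopyholonomy} to $U_i$ with the complete transversal $T$ and with tolerance $\varepsilon/(\dim\FF+1)$: this yields two open sets $V_i,W_i$ covering $U_i$ and tangential contractions $H^{V_i},H^{W_i}$ such that
\[
\LB\bigl(H^{V_i}(V_i\times\{1\})\bigr)+\LB\bigl(H^{W_i}(W_i\times\{1\})\bigr)\le \LB(T)+\frac{\varepsilon}{\dim\FF+1}\,.
\]

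Next I would observe that the family $\{V_i,W_i\}_{i=0}^{\dim\FF}$ is a countable open covering of $M$ by $\FF$-categorical open sets, so by the definition of $\Cat(\FF,\LB)$ we have
\[
\Cat(\FF,\LB)\le \sum_{i=0}^{\dim\FF}\bigl(\tau_\LB(V_i)+\tau_\LB(W_i)\bigr)\le \sum_{i=0}^{\dim\FF}\left(\LB(T)+\frac{\varepsilon}{\dim\FF+1}\right)=(\dim\FF+1)\,\LB(T)+\varepsilon\,.
\]
Letting $\varepsilon\to 0$ gives the stated inequality.

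I do not expect a real obstacle here: both the Singhof--Vogt theorem and Proposition~\ref{p:homotopyholonomy} are already available, and the hypotheses (compact $M$, $C^2$ foliation, $\LB$ finite on compact sets, $T$ a complete transversal) are precisely those needed to invoke them. The only mild subtlety is ensuring that Proposition~\ref{p:homotopyholonomy} applies to each $U_i$; this only requires that $\LB$ be finite on compact sets and that $T$ be a complete transversal, both of which are part of the hypotheses of the theorem, so it goes through without incident.
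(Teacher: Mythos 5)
Your proof is correct and follows essentially the same route as the paper: invoke the Singhof--Vogt bound $\Cat(\FF)\le\dim\FF+1$ to get a covering by $\dim\FF+1$ categorical open sets, then apply Proposition~\ref{p:homotopyholonomy} to each and sum. The paper's own argument is just a terser version of yours (distributing $\varepsilon$ slightly differently), so there is nothing to add.
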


\begin{proof}
Since $\Cat(\FF)\leq\dm\FF+1$, we can use Proposition~\ref{p:homotopyholonomy} with
$\dm\FF +1$ tangentially categorical open sets covering $M$. Then
\[
  \Cat(\FF,\LB)\leq (\dm\FF+1)\cdot(\LB(T)+\varepsilon)
  \]
for all $\varepsilon>0$.
\end{proof}

\begin{cor}\label{c:minimalfoliation}
Let $(X,\FF)$ a minimal foliated manifold and let \LB\ be a
transverse invariant measure of \FF\ that is regular without atoms. Then $\Cat(\FF,\LB)=0$.
\end{cor}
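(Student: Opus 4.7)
The plan is to pass to the holonomy pseudogroup $\Gamma$ acting on a complete transversal $T$ and combine Proposition~\ref{p:equalitycategories} with Proposition~\ref{p:Cat(Gamma,LB,S)=0}. Concretely, it suffices to produce, for every $\varepsilon>0$, an open subset $U\subset T$ that meets every $\Gamma$-orbit and satisfies $\LB(U)<\varepsilon$; then Proposition~\ref{p:Cat(Gamma,LB,S)=0} gives $\Cat(\Gamma,\LB)=0$, and Proposition~\ref{p:equalitycategories} upgrades this to $\Cat(\FF,\LB)=0$.

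First I would verify that minimality of $\FF$ forces every $\Gamma$-orbit to be dense in $T$. Given a non-empty open $V\subset T$ and a point $v\in V$, pick a foliated chart $\varphi:W\to B^n\times S$ around $v$ adapted to $T$ so that $\varphi(V\cap W)=\{0\}\times V_0$ for some open $V_0\subset S$. The plaque-saturation $\varphi^{-1}(B^n\times V_0)$ is then a genuine open set of $X$, so any leaf, being dense in $X$ by minimality, hits some plaque $\varphi^{-1}(B^n\times\{s\})$ with $s\in V_0$, and therefore hits $V$ itself. Consequently every $\Gamma$-orbit meets every non-empty open subset of $T$, i.e.\ is dense.

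Next I would construct the small open set. Pick any $x\in T$; since $\LB$ has no atoms, $\LB(\{x\})=0$, and regularity of $\LB$ yields an open neighborhood $U$ of $x$ in $T$ with $\LB(U)<\varepsilon$. By the previous step every $\Gamma$-orbit meets the non-empty open set $U$, which is exactly the condition needed to apply Proposition~\ref{p:Cat(Gamma,LB,S)=0}. This completes the plan.

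The only subtle point, and the main obstacle to formalising this, is the implication \emph{density of leaves in $X$} $\Rightarrow$ \emph{density of orbits in $T$}: a $T$-open set is a priori not $X$-open, so naively invoking density of leaves is not enough. The local chart argument above resolves it by inflating $V\cap W$ to the genuinely $X$-open plaque-saturation inside a single foliated chart; everything else is a direct appeal to results already established in the paper.
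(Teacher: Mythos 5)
Your proof is correct, but it follows a genuinely different route from the one the paper intends. The corollary sits immediately after the theorem $\Cat(\FF,\LB)\leq(\dim\FF+1)\cdot\LB(T)$ (valid for any complete transversal $T$ of a compact $C^2$ foliated manifold), and the intended argument is simply: by minimality every non-empty open subset of a complete transversal is again a complete transversal, so regularity plus atomlessness produce complete transversals of arbitrarily small measure, and the theorem finishes. You instead descend to the holonomy pseudogroup and combine Proposition~\ref{p:Cat(Gamma,LB,S)=0} with Proposition~\ref{p:equalitycategories}. Both arguments ultimately rest on the same two facts --- density of the orbits in the transversal and outer regularity of an atomless measure at a single point --- but they control the number of categorical pieces differently: the paper's route uses the Singhof--Vogt bound $\Cat(\FF)\leq\dim\FF+1$, which is where the $C^2$ and compactness hypotheses enter, whereas yours only needs Milnor's dimensional trick (Proposition~\ref{p:dimensional trick}), so it dispenses with the $C^2$ assumption at the cost of the standing hypotheses of Section~4 (finitely generated holonomy pseudogroup acting on a paracompact transversal of finite topological dimension, which do hold in the compact case). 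Your insistence on converting density of leaves in $X$ into density of orbits in $T$ via the plaque-saturation inside a single chart addresses precisely the point that makes the corollary non-trivial, and you handle it correctly; the same observation is what justifies the paper's claim that small open subsets of $T$ remain complete transversals.
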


\begin{rem}
The same arguments showed here can be used to prove the following more general result. Let $N\geq \Cat(\FF)$ and let $T$ be a complete transversal, then $\Cat(\FF,\LB)\leq N\cdot\LB(T)$, and this holds without further assumptions in the structure of the lamination (neither compactness nor any differentiable structure is needed).
\end{rem}

\begin{rem}
There exist minimal $C^1$-diffeomorphisms $f_\alpha$ of $S¹$ such that $rot(f_\alpha)= \alpha$ for $\alpha\in [0,2\pi]\setminus\Q$ and admitting a standard Borel fundamental domains $D_\alpha$ \cite{Kodama-Matsumoto}. The suspension of these diffeomorphisms yield minimal flows on the torus. A transverse invariant measure is determined by a usual measure in $D_\alpha$ extended to $S¹$ by $f_\alpha$. In measurable terms, this foliation is isomorphic to a product foliation $\R\times D_\alpha$. Therefore $\Cat(\R\times_{f_\alpha}S¹,\LB)>\LB(D_\alpha)>0$.   This is not a contradiction with Corollary~\ref{c:minimalfoliation} since the transverse invariant measures induced in $S¹$ are not regular. 
\end{rem}

\section{Upper semicontinuity of topological \LB-category}\label{s:upper semicontinuity}
We adapt a result of W. Singhof and E. Vogt \cite{Vogt-Singhof},
which asserts that the tangential category is an upper
semicontinuous map defined on the space of $C^2$ foliations over a
$C^\infty$ closed manifold $M$.

Consider an embedding of $M$ into an
Euclidean space $\mathbb{R}^{N}$. Then each $C^2$ foliation on $M$ can be identified to
a $C^1$-map $M\to \mathbb{R}^{N^{2}}$, which maps each point to
the orthogonal projection of $\mathbb{R}^{N}$ to the tangent space
of the foliation at this point. The topology on the space of
$C^{k+1}$ foliations over $M$ is induced by the topology of the
$C^{k}$-maps $M\to \mathbb{R}^{N^{2}}$. This topological space is
denoted by $\Fol_{p}^{k}(M)$.

Let $T\pitchfork \FF$ denote that $T$ is transverse to \FF\ (in
the differentiable sense).

\begin{prop}[Singhof-Vogt \cite{Vogt-Singhof}]\label{uso1} Let $M$ be a closed n-submanifold
of $\mathbb{R}^{N}$ and $\mathcal{F}\in \Fol_{p}^{1}(M)$. Let
$A_{1},\dots, A_{m}$ be compact $C^1$ $(n-p)$-submanifolds of $M$
transverse to $\mathcal{F}$, let $C_{i}\subset A_{i}$ be a collar
of the boundary of each $A_{i}$, and let
$A=\bigcup_{i}(A_{i}\setminus C_{i})$. Then there exists a
neighborhood $\VV$ of \FF\ in $\Fol_{p}^{1}(M)$ and an open
neighborhood $W$ of $A$ in $M$ such that $W$ is \GG-categorical
for all $\mathcal{G}\in \VV$.
\end{prop}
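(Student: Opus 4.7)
The plan is to combine the transversality of the $A_i$ with the openness of transversality in the $C^1$-topology: first build an $\FF$-leafwise projection onto each $A_i$ from a tubular neighborhood, then patch these projections (using, for instance, the operation $*$ of Definition~\ref{d:operation homotopy}) into a single $\FF$-contraction of a neighborhood $W$ of $A$, and finally verify that the same $W$ still admits a $\GG$-contraction whenever $\GG$ lies in a small neighborhood $\VV$ of $\FF$, with the collars $C_i$ playing the role of a uniform buffer against the perturbation.

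For the local step, around each point of $A_i$ there is a $C^1$ foliated chart in which $A_i$ appears as a local transversal meeting each plaque at most once, thanks to $A_i\pitchfork\FF$ and $\dim A_i=n-p$. A standard gluing argument, using the compactness of $A_i$, assembles these local transversal projections into a $C^1$ map $\pi_i^{\FF}\colon W_i\to A_i$ on an open neighborhood $W_i$ of $A_i$, together with a tangential deformation $H_i^{\FF}\colon W_i\times[0,1]\to M$ that slides each $x\in W_i$ along its plaque to $\pi_i^{\FF}(x)\in A_i$. I would then pick open neighborhoods $W_i'$ of $A_i\setminus C_i$ with $\overline{W_i'}\subset W_i$, set $W=\bigcup_i W_i'$, and patch the $H_i^{\FF}$ into a single $\FF$-contraction of $W$ by induction on $i=1,\dots,m$, contracting at step $i$ the portion of $W$ not yet contracted but lying in $W_i'$, using the $*$-composition to concatenate the successive deformations.

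For the perturbed foliation $\GG$, transversality is a $C^1$-open condition, so there is a neighborhood $\VV\subset\Fol_p^1(M)$ of $\FF$ with $A_i\pitchfork\GG$ for every $i$ and every $\GG\in\VV$. Shrinking $\VV$ if necessary, the compactness of each $\overline{W_i'}$ and the implicit function theorem produce $\GG$-leafwise projections $\pi_i^{\GG}\colon W_i\to A_i$ defined on the \emph{same} $W_i$; the collar $C_i$ is exactly what guarantees that the image of $\overline{W_i'}$ under $\pi_i^{\GG}$ remains inside $A_i$ (not pushed across $\partial A_i$) uniformly for $\GG\in\VV$. Running the patching of the previous paragraph with $\pi_i^{\GG}$ in place of $\pi_i^{\FF}$ yields a $\GG$-contraction of $W$, so $W$ is $\GG$-categorical.

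The main obstacle is this last uniformity: the neighborhoods $W_i$ and the shrunken $W_i'$ must be chosen \emph{before} $\VV$, and one then needs a single $\VV$ that works for all the local projections simultaneously and is compatible with the inductive patching. The collars are indispensable here, because without them a $C^1$-small perturbation could push nearby leaves just past $\partial A_i$ and destroy the contraction at the boundary; with the buffer $C_i$ in place, the compactness of each $A_i\setminus C_i$ combined with the openness of transversality delivers the uniform choice of $\VV$ that the statement requires.
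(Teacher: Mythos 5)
First, a point about the comparison you were asked for: Proposition~\ref{uso1} is stated in this paper as a quoted result of Singhof and Vogt \cite{Vogt-Singhof} and no proof is given here, so your argument can only be measured against the original source, not against anything in this text. Your coarse strategy --- leafwise tubular neighborhoods of the $A_i$, openness of transversality in the $C^1$ topology, and the collars $C_i$ as a buffer so that the perturbed projections do not spill over $\partial A_i$ --- is the right general picture, and the perturbation step you describe is essentially what the simultaneous local parametrizations of Proposition~\ref{atcon} deliver.

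The genuine gap is in the patching step, which is in fact the crux of the whole proposition. The operation of Definition~\ref{d:operation homotopy} concatenates deformations of a \emph{single} set whose time-one image lands in the domain of the next deformation; it does not allow you to ``contract at step $i$ the portion of $W$ not yet contracted but lying in $W_i'$''. That prescription is discontinuous across the frontier between $W_1'\cap W_2'$ and $W_2'\setminus W_1'$: a point of the overlap is sent into $A_1$ while an arbitrarily close point of $W_2'\setminus W_1'$ is sent into $A_2$. Worse, a tangential contraction must have $H(-,1)$ constant on each leaf of $\FF_W$, i.e.\ on each connected component of $L\cap W$, and when such a component meets several tubes your construction sends different parts of one component to different $A_i$'s. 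This situation cannot be excluded: for a single compact transversal $A_i$ one can show distinct points of $A_i\cap L$ are uniformly leafwise separated, so a thin tube genuinely fibers over $A_i$ with plaque-disk fibers; but points of $A_i\cap L$ and $A_j\cap L$ for $i\neq j$ can be arbitrarily leafwise close, so a component of $L\cap\bigcup_i W_i'$ can chain through several tubes, and the ``target point'' of such a component jumps discontinuously as the leaf varies and disks appear or disappear from the chain. Resolving this --- choosing $W$ and the contraction so that the collapse of the overlapping configurations is continuous and leafwise constant --- is the actual content of the Singhof--Vogt argument, and as written your construction does not produce a tangential contraction even for $\GG=\FF$ once two of the $A_i$ come leafwise close to each other.
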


\begin{prop}[Singhof-Vogt \cite{Vogt-Singhof}]\label{uso2} Let $\mathcal{F}\in
\Fol_{p}^{1}(M)$, let $U$ be an open set in $M$, and let
$F~:~U~\times~I\to~M$ be a tangential homotopy such that
$F(x,0)=x$ for all $x\in U$. Let $\varepsilon>0$ and let $K$ be a
compact subset of $U$. Then there exists a neighborhood $\VV$ of
$\mathcal{F}$ such that, for all $\mathcal{G}\in \VV$, there
exists a \GG-tangential homotopy $G:U\times I\to M$ with $G(x,0)=x$ for all $x\in U$ and
$|F-G| <\varepsilon$ in $K\times I$.
\end{prop}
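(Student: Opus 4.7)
The plan is to realize $G$ as the flow of a vector field obtained by ``projecting'' the time-derivative of $F$ onto $T\GG$, and then to invoke continuous dependence of ODE solutions on parameters. Via the embedding $M\hookrightarrow\R^N$, each foliation $\HH\in\Fol^1_p(M)$ is encoded by a $C^1$ map $p_\HH:M\to\R^{N^2}$ assigning to each $y$ the orthogonal projection onto $T_y\HH$, and $\Fol^1_p$-closeness of $\GG$ to $\FF$ means $C^1$-closeness of $p_\GG$ to $p_\FF$.

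First I would set $\xi(x,t):=\partial_t F(x,t)\in T_{F(x,t)}\FF$ and define $G(x,t)$ as the solution of the non-autonomous ODE
\[
  \partial_t G(x,t)=p_\GG\bigl(G(x,t)\bigr)\bigl[\xi(x,t)\bigr],\qquad G(x,0)=x.
\]
By construction $\partial_t G(x,t)\in T_{G(x,t)}\GG$, so each curve $t\mapsto G(x,t)$ is contained in the leaf of $\GG$ through $x$ and $G$ is automatically a $\GG$-tangential homotopy. Compactness of $M$ rules out blow-up, so the solution exists on all of $I$ for every $x\in U$; joint continuity of $G$ on $U\times I$ follows from standard continuous-dependence results.

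Next I would compare $F$ and $G$ on $K\times I$. Setting $\Delta(x,t)=|F(x,t)-G(x,t)|$ and using the crucial identity $p_\FF(F(x,t))\xi(x,t)=\xi(x,t)$,
\[
  |\partial_t F-\partial_t G|\le \bigl|p_\FF(F)-p_\GG(F)\bigr|\,|\xi|+\bigl|p_\GG(F)-p_\GG(G)\bigr|\,|\xi|\le \|p_\FF-p_\GG\|_{C^0}\,|\xi|+C\,\Delta\,|\xi|,
\]
where $C$ is a Lipschitz constant for $p_\GG$, uniform in a small $\Fol^1_p$-neighborhood of $\FF$. The compactness of $K\times I$ bounds $|\xi|$ uniformly, so Gronwall's inequality yields $\Delta<\varepsilon$ on $K\times I$ as soon as $\|p_\FF-p_\GG\|_{C^0}$ is sufficiently small. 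Taking $\VV$ to be the corresponding neighborhood of $\FF$ completes the proof.

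The main obstacle is the cross-basepoint comparison: the vector $\xi(x,t)$ naturally sits at $F(x,t)$, but the ODE must project it at the a priori unknown point $G(x,t)$. Splitting the projection error into a ``foliation'' part (controlled by $\|p_\FF-p_\GG\|_{C^0}$) and a ``base-point'' part (controlled by $C\Delta$, using the $C^1$ regularity encoded in the topology of $\Fol^1_p(M)$) and feeding the result through Gronwall's lemma is what converts the implicit inequality into the desired uniform estimate.
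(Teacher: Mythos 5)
First, note that the paper does not prove this proposition at all: it is quoted verbatim from Singhof--Vogt \cite{Vogt-Singhof}, so there is no internal proof to compare against. Your ODE-plus-Gronwall strategy is in any case genuinely different from the argument in \cite{Vogt-Singhof}, which is a chart-by-chart transport: one subdivides $I$ so that each piece of each track $t\mapsto F(x,t)$ stays in a single foliated chart, and then uses the simultaneous local parametrizations (Proposition~\ref{atcon} of this paper) to replace the $\FF$-plaque motion by the corresponding $\GG$-plaque motion, keeping the $\GG$-transverse coordinate constant on each subinterval. That argument needs only continuity of $F$.

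This points to the genuine gap in your proposal: the hypothesis gives only a \emph{tangential homotopy} $F$, which in this paper's conventions is a continuous foliated map $U\times I\to M$; no differentiability in $t$ is assumed (the $C^r$ notion is introduced separately and is not invoked here). Your construction cannot start without $\xi(x,t)=\partial_t F(x,t)$, and the later steps need more still: a uniform bound on $|\xi|$ over $K\times I$ and joint continuity of $G$ in $(x,t)$ both require $\partial_t F$ to exist and be jointly continuous. So as written the argument proves a weaker statement, valid only for leafwise-$C^1$ homotopies with continuous time-derivative. To salvage it you would need a preliminary step approximating an arbitrary tangential homotopy by a leafwise-$C^1$ one rel $t=0$ (itself not trivial, since the smoothing must preserve leaves), or else abandon the ODE and argue chart by chart as Singhof--Vogt do. The rest of your estimate is sound: granted $\xi$, the identity $p_\FF(F)\xi=\xi$, the splitting of the error into a $\|p_\FF-p_\GG\|_{C^0}$ term and a Lipschitz term (with Lipschitz constant uniform on a $C^1$-neighborhood, which is exactly what the topology of $\Fol^1_p(M)$ provides), and the Gronwall step all work, and the resulting $G$ is indeed a $\GG$-tangential homotopy since each trajectory has velocity tangent to $\GG$.
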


In the next proposition, $B_{r}^{k}$ denotes the open ball in $\R^k$ of radius $r$ and centered at the origin, and $D_{r}^{k}$ denotes the corresponding closed ball.

\begin{prop}[Singhof-Vogt \cite{Vogt-Singhof}]\label{atcon}
Let \FF\ be a $C^{r+1}$ foliation on a closed manifold
$M\subset\R^{N}$ with $\dm\FF=p$ and $\dm M=n$, and let $a\in M$.
Let $\varphi:B_{3}^{p}\times B_{4}^{n-p}\to U\subset M$ be a
parametrization associated to a foliated chart containing $a$.
Then there exists a neighborhood $\VV$ of \FF\ in
$\Fol_{p}^{r}(M)$ such that, for all $\GG\in \VV$ and $t\in
B_{3}^{n-p}$, there exists a map $g_{t}:B_{3}^{p}\to B_{4}^{n-p}$
with the following properties:
\begin{enumerate}

\item[(i)]  The map $g:B_{3}^{p}\times B_{3}^{n-p}\longrightarrow
B_{3}^{p}\times B_{4}^{n-p}$, defined by
$(x,t)\mapsto~(x,g_{t}(x))$, is a $C^{r+1}$-embedding.

\item[(ii)]  $B_{3}^{p}\times B_{2}^{n-p}$ is contained in the
image of $g$ and $B_{3}^{p}\times B_{1}^{n-p}$ is contained in
$g(B_{3}^{p}\times B_{2}^{n-p})$.

\item[(iii)]  For each $t\in B_{3}^{n-p}$, the set
$\{\,(x,g_{t}(x))\mid x\in B_{3}^{p}\,\}$ is contained in the leaf of
the pull back $\varphi^{\ast}\GG$ of \GG\, to $B_{3}^{p}\times
B_{4}^{n-p}$ through the point $(0,t)$.

\end{enumerate}
In fact, $g$ is uniquely determined by \upn{(}i\upn{)} and
\upn{(}iii\upn{)}, the map defined by $\varphi\circ
g:B_{3}^{p}\times B_{3}^{n-p}\to U_{g}\subset M$ is a $C^{r+1}$
parametrization of a foliated chart for \GG, and
$\varphi(B_{3}^{p}\times B_{2}^{n-p})\subset U_{g}$, where
$U_g=\varphi\circ g(B_{3}^{p}\times B_{3}^{n-p})$.
\end{prop}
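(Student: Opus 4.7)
The plan is to exploit the local stability of foliations under small $C^r$-perturbations: inside a fixed foliated chart of $\FF$, the leaves of any nearby foliation can still be represented as graphs over the plaque directions. Pull everything back via $\varphi$ to the model $B_3^p\times B_4^{n-p}$, so that $\widetilde\FF=\varphi^*\FF$ has as its leaves the horizontal slices $B_3^p\times\{t\}$. For $\GG$ close to $\FF$ in $\Fol_p^r(M)$, the tangent distribution of $\varphi^*\GG$ converges $C^r$ to the horizontal distribution. Since transversality to the vertical fibers $\{x\}\times B_4^{n-p}$ is an open condition, we obtain a neighborhood $\VV$ of $\FF$ such that, over the compact set $\overline{B_3^p}\times\overline{B_3^{n-p}}$, the distribution of $\varphi^*\GG$ remains transverse to these fibers and is therefore the graph of a $C^r$ map $A(x,y)\in\mathrm{Hom}(\R^p,\R^{n-p})$, whose $C^r$-norm can be made arbitrarily small by further shrinking $\VV$.

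For each $t\in B_3^{n-p}$, build $g_t$ by integrating the distribution: the leaf of $\varphi^*\GG$ through $(0,t)$ is, by Frobenius (involutivity is automatic since $\varphi^*\GG$ is a foliation), the graph of the unique $C^{r+1}$ solution of the initial value problem
\[
\frac{\partial g_t}{\partial x}(x)=A(x,g_t(x)),\qquad g_t(0)=t\;.
\]
Shrinking $\VV$ so that $\|A\|_{C^0}$ is small enough, one obtains $|g_t(x)-t|<1$ for every $x\in B_3^p$, so that $g_t$ is defined on the entire ball $B_3^p$ and takes values in $B_4^{n-p}$.

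The three properties then follow straightforwardly. (iii) is built into the construction. For (i), the map $g:(x,t)\mapsto(x,g_t(x))$ is $C^{r+1}$-close to the identity of $B_3^p\times B_3^{n-p}$ and any $C^1$-small perturbation of the identity is a $C^{r+1}$-embedding. For (ii), the same $C^0$-proximity gives $g(B_3^p\times B_3^{n-p})\supset B_3^p\times B_2^{n-p}$ and $g(B_3^p\times B_2^{n-p})\supset B_3^p\times B_1^{n-p}$. Uniqueness of $g$ from (i) and (iii) is uniqueness of the ODE solution. Finally, $\varphi\circ g:B_3^p\times B_3^{n-p}\to U_g\subset M$ parametrizes a foliated chart of $\GG$ because by (iii) it sends horizontal slices to leaves of $\varphi^*\GG$, and (i)--(ii) provide the diffeomorphism onto $U_g$ and the inclusion $\varphi(B_3^p\times B_2^{n-p})\subset U_g$.

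The main obstacle is that $g_t$ must be defined on the \emph{entire} ball $B_3^p$, with its graph remaining inside $B_3^p\times B_4^{n-p}$, with no shrinkage of the plaque radius allowed. This is precisely the role of the nested radii $1<2<3<4$: the solution moves at most $3\|A\|_{C^0}$ in the vertical direction along a path in $B_3^p$, and controlling this quantity by the available slack (the gap $4-3=1$, and analogously the gaps $3-2$ and $2-1$ for (ii)) dictates how small $\VV$ must be chosen. Once this quantitative choice is made, the uniform bounds in the ODE feed back through the implicit function theorem arguments in Step (i)--(ii) without further shrinking of the domain.
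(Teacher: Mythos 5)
The paper does not prove this proposition: it is quoted verbatim from Singhof--Vogt \cite{Vogt-Singhof}, so there is no internal proof to compare with. Your graph-plus-ODE argument is the standard route to this kind of local stability statement and is essentially sound: representing $T(\varphi^{\ast}\GG)$ as the graph of a small $A(x,y)\in\mathrm{Hom}(\R^p,\R^{n-p})$, integrating to obtain $g_t$, and using the nested radii $1<2<3<4$ to absorb the vertical drift is exactly the right mechanism, and your closing paragraph correctly identifies the quantitative point that makes the lemma nontrivial.

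Two steps need tightening. First, you only assert transversality and smallness of $A$ over $\overline{B_3^p}\times\overline{B_3^{n-p}}$, but for $|t|$ close to $3$ the graph of $g_t$ leaves this set immediately, so the a priori estimate $|g_t(x)-t|\le 3\|A\|_{C^0}$ is being invoked outside the region where you control $A$. You need uniform control on the full region swept by the graphs, i.e.\ on $B_3^p\times B_4^{n-p}$; this is available because $d(\FF,\GG)=\maxim_{x\in M}\|\FF_x-\GG_x\|$ is a global bound and a chart with these nested radii is understood to extend past the closure of its nominal domain, but it has to be said. Second, and more substantively, claim (i) requires $g$ to be $C^{r+1}$ \emph{jointly} in $(x,t)$, and this does not follow from the ODE alone: smooth dependence on initial conditions for a $C^r$ right-hand side only gives $g\in C^r$ jointly (the relation $\partial_x g=A(x,g)$ gains a derivative in $x$, not in $t$), so $\partial_t^{r+1}g$ is not controlled. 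The missing input is that $\GG$ is itself a $C^{r+1}$ foliation: locally $g$ is a composition of $\varphi^{\pm1}$ with the $C^{r+1}$ distinguished charts of $\GG$ (an implicit-function-theorem expression of the plaque as a graph), continued along the compact plaque by a finite chain of such charts; this also rescues injectivity and uniqueness in the low-regularity case where ODE uniqueness is unavailable. With these two repairs the proof is complete.
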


\begin{defn}[Singhof-Vogt \cite{Vogt-Singhof}]\label{d:atcon}
Under the conditions of Proposition \ref{atcon}, the family
$\{\,\varphi\circ g\mid \GG\in \VV\,\}$ is called a {\em
simultaneous local parametrization\/} of $\VV$ in $M$.
\end{defn}

Simultaneous local parametrizations allow to work with foliations
near other ones in an easy way. We will use them to prove an
important lemma of this section.

\begin{exmp}\label{ex:Kronecker}
A previous motivation of the upper semicontinuity of the topological $\Lambda$-category is given by the Kr\"onecker flows $\mathcal{F}_\alpha$ on the torus, where $\alpha\in\R$ is the slope of the flow lines. The topological \LB-category of $\mathcal{F}_\alpha$ can be easily computed by using
Proposition~\ref{t:topologicalcompact} if $\alpha$ is rational, and it is
zero if $\alpha$ is irrational by Corollary~~\ref{c:minimalfoliation}.
We restrict our study to the subspace of these flows,
$\{\,\mathcal{F}_{\alpha}\mid \alpha\in\mathbb{R}\,\}\subset\Fol_{1}^{\infty}(T^{2})$, which is
homeomorphic to $\R$ by the mapping $\FF_{\alpha}\mapsto \alpha$. A transverse invariant measure $\Lambda$ for all of these flows is the normalized Lebesgue measure on a fixed
meridian. The topological
\LB-category is given by the mapping $\frac{m}{n}\mapsto
\frac{2}{|n|}$, $0\mapsto 2$ and $r\mapsto 0$, where $m$ and $n$ are
coprime integers and $r$ is irrational. Clearly, this map is upper
semicontinuous.

In the same example, we can also allow to vary the measure in a
continuous way by taking $(\FF_\alpha, f(\alpha)\cdot\LB)$, where
\LB\ is the normalized Lebesgue measure and $f~:~\R\to\R^+$ a
continuous map. The \LB-category function, in this case, is the
product of the above map and $f$, which is upper semicontinuous
as well.
\end{exmp}

Example~\ref{ex:Kronecker} suggests that the $\Lambda$-category is an upper
semicontinuous function in a certain topological space of foliations
with transverse invariant measures. From now on in this section, suppose that the transverse
invariant measures are finite on compact sets.

\begin{rem}\label{Vog2}
If $S$ is a complete differentiable transversal of a foliation
$\mathcal{F}$, then there exists a neighborhood
$U^{S}(\mathcal{F})$ of $\mathcal{F}$ in $\Fol_{p}^{1}(M)$ such
that $S$ is a complete differentiable transversal of all
$\mathcal{G}\in U^{S}(\mathcal{F})$.
\end{rem}

\begin{defn}[Strong topology]
Let $(\FF, \LB)$ be a $C^2$ foliation of dimension $p$ on a closed
manifold $M$ with a transverse invariant measure. Let $\UU$ be a
neighborhood of \FF\, in $\Fol^1_p(M)$, $T$ a differentiable
complete transversal of \FF, and $\varepsilon>0$. Let
$$
  U(\mathcal{F},\Lambda,\UU,T,\varepsilon)=\{\,(\GG,\Delta)\mid\mathcal{G}\in\UU,\ T\pitchfork\mathcal{G},\ \|\Lambda_{T}-\Delta_{T}\|<\varepsilon\,\}\;,
$$
where $(\GG,\Delta)$ is a $C^2$
foliation of dimension $p$ on $M$ with a transverse invariant
measure. This kind of sets form a base for a topology in the space
of $C^2$ foliations of dimension $p$ on $M$ with transverse
invariant measures finite on compact sets. This topological space
is denoted by $\MeasFol_{p}^{1}(M)$.
\end{defn}

For locally compact Polish spaces, there exists a weak topology on
the set of measures coarser than the norm topology. We use it to give
a weaker topology in the above space. It is said that a sequence $\{\LB_n\}$ of
measures on a locally compact Polish space $P$ converges in the
weak sense to a measure $\LB$ when the sequence $\int_P
f\,d\LB_n$ converges to $\int_P f\,d\LB$ for all continuous
function $f:P\to\R$.

\begin{defn}[Weak topology]
Let $(\FF_n,\LB_n)$ be a sequence of $C^2$ foliations with
transverse invariant measures on a closed manifold $M$. We say that
$(\FF_n,\LB_n)$ converges in the weak sense to $(\FF,\LB)$ if
$\FF_n$ converges to $\FF$ in $\Fol^1_p(M)$ and, for all complete
differentiable transversal $T$ to \FF\ and for all continuous
function $f:T\to \R$, the sequence $\int_T f\,d\LB_n$ converges to
$\int_T f\,d\LB$. The topological space determined by this
condition will be noted by $\WW\MeasFol_p^1(M)$.
\end{defn}

\begin{rem}\label{r:regularity}
Since measures are finite on compact sets, they are externally regular
 on \sg-compact sets. It is also true that they are {\em internally regular\/} on open sets; i.e., for all open set
$V$ contained in a transversal of \FF, we have
$$
  \LB(V)=\maxim\{\,\LB(K)\mid K\subset V\ \text{is a compact subset}\,\}\;.
$$
\end{rem}

We prove upper semicontinuity of the topological
\LB-category with respect to the weak topology, which implies its upper semicontinuity with respect to the strong topology too.
		


\begin{defn}\label{d:degreepreimage}
Let $f:A\to B$ be a map. For $n\in \mathbb{N}\cup\{\infty\}$, the $n$-{\em preimage\/} of $f$ is
$$
  F^{f}_{n}=\{\,x\in A\mid\# f^{-1}(f(x))=n\,\}\;.
$$
\end{defn}

In Definition~\ref{d:degreepreimage}, the sets $F^{f}_{n}$ are
\sg-compact when $f$ is a continuous map, and therefore they are
measurable.

\begin{defn}
A {\em local differentiable embedded transversal\/} is a transversal associated to a differentiable foliated chart; equivalently, it is a differentiable embedding $i:B^{n-p}(r)\to M$ such that $i(B^{n-p}(r))$ is contained in a distinguished open set and it is diffeomorphic to an open set of an associated transversal via the projection map.
An open set $U$ in a foliated space is called {\em regular\/}
when there exist a finite number of local differentiable
embedded transversals, $T_1,\dots,T_k$, contained in $U$ such that $T_1\cup\dots\cup T_k$ is a complete transversal of $\FF_U$. Suppose $\R^2$ with the usual foliation by lines $\R\times\{\ast\}$, consider the closed set $F=(\{0\}\cup\{\frac{1}{n}\mid n\in\N\})\times [0,\infty)$, the open set $\R^2\setminus F$ is not regular.
\end{defn}

\begin{rem}\label{r:openregular}
Let $U$ an open set in $M$ and $K\subset U$ a compact set. Then
there exists a regular open set $V$ such that $K\subset
V$ and $\overline{V}\subset U$. To prove this assertion, it is enough to
cover $K$ with a finite number of foliated charts whose closures
are contained in $U$.
\end{rem}

\begin{rem}\label{r:beginning}
Let $(M,\FF)$ be a $C^\infty$ foliated closed manifold, let $U$
be a regular \FF-categorical open set and let $H$ be an
\FF-contraction for $U$. By using Lemma~\ref{l:vogt0}, there exist
transversals $T_{1},\dots,T_{k}$ satisfying the following
conditions:
\begin{enumerate}

  \item[(1)] Any leaf of $\FF_{U}$ meets each $T_{i}$ at most in one
point and cuts some of them.

  \item[(2)] $H(T_{i}\times\{1\})$ is a local embedded transversal of \FF.

\end{enumerate}
By general position arguments, it is easy to prove that there
exists an \FF-contraction $G$ such that
$\LB(G(U\times\{1\}))\leq\LB(H(U\times\{1\}))$ and
$G(T_i\times\{1\})$ is a local differentiable embedded transversal
of \FF\ for $1\leq i \leq k$. Therefore we also suppose that
\begin{enumerate}

  \item[(3)] $H(T_{i}\times\{1\})$ is a differentiable transversal of \FF.

\end{enumerate}
Hence $H(U\times\{1\})=\bigcup_{i=1}^{k}H(T_{i} \times\{1\})$.
\end{rem}

The following lemma is obvious since $H(-,1):\bigcup_{i=1}^{k}T_{i}\to M$ is a continuous map.

\begin{lemma}\label{supra1}
Let $U$ be a regular \FF-categorical open set, let $H$ be an
\FF-contraction for $U$, let $T_{1},\dots,T_{k}$ be transverals
satisfying the previous conditions, and let \LB\; be a transverse
invariant measure. Then
\begin{align*}
\widetilde{\LB}(H(U\times\{1\}))=\sum_{i=1}^{k}\widetilde{\LB}(H(F^{H(
- , 1)}_{i}\times\{1\}))=\sum_{i=1}^{k}\frac{1}{i}\,\LB(F^{H( -
,1)}_{i})\;,
\end{align*}
where we consider $H( - ,1):\bigcup_{i=1}^{k}T_{i}\to M$, and
$F^{H( - ,1)}_{i}$ denotes de $i$-preimage of $H(-,1)$.
\end{lemma}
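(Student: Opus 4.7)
The plan is to reduce both equalities to the partition of $\bigsqcup_i T_i$ (viewed as the domain of the map $H(-,1)$) by the $i$-preimage sets $F^{H(-,1)}_i$, and then use holonomy invariance of $\Lambda$ on each piece.

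First I would argue that condition~(1) of Remark~\ref{r:beginning} forces $F^{H(-,1)}_i=\emptyset$ for $i>k$, so $\bigsqcup_{i=1}^k T_i=\bigsqcup_{i=1}^k F^{H(-,1)}_i$ and hence
\[
H(U\times\{1\})=\bigcup_{i=1}^k H(T_i\times\{1\})=\bigcup_{i=1}^k H(F^{H(-,1)}_i\times\{1\})\;.
\]
Next, these last unions are disjoint: if $y\in H(F^{H(-,1)}_i\times\{1\})\cap H(F^{H(-,1)}_j\times\{1\})$ then, by the very definition of the $n$-preimage, $y$ would have simultaneously $i$ and $j$ preimages under $H(-,1)$, forcing $i=j$. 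Since each $H(F^{H(-,1)}_i\times\{1\})$ is a transverse set (it is contained in $H(U\times\{1\})$, which is a countable union of local transversals by Lemma~\ref{l:vogt0} combined with conditions~(2)--(3)), additivity of the extension $\widetilde{\Lambda}$ to transverse sets gives the first equality.

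For the second equality, I would fix $i$ and analyze the restriction $H(-,1)\colon F^{H(-,1)}_i\to H(F^{H(-,1)}_i\times\{1\})$, which is surjective and exactly $i$-to-$1$ by construction. Because $H$ is a tangential contraction, any two preimages of the same point lie in a common leaf of $\FF$; hence, locally, the $i$ preimages over a small neighbourhood in the image are matched by holonomy transformations of $\FF$. Using the regularity of $U$ and conditions~(2)--(3) (so that the $T_j$ and the $H(T_j\times\{1\})$ are local differentiable transversals), one can split $F^{H(-,1)}_i$ into a countable disjoint union of Borel pieces on each of which $H(-,1)$ is a bijection onto a Borel subset of $H(F^{H(-,1)}_i\times\{1\})$ realised by an element of the holonomy pseudogroup, and such that over each image point exactly $i$ of these pieces sit above. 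Invariance of $\Lambda$ (in the extended sense of \cite{Connes} used in the paper for transverse sets) under these holonomy transformations then yields
\[
\Lambda(F^{H(-,1)}_i)=i\cdot\widetilde{\Lambda}(H(F^{H(-,1)}_i\times\{1\}))\;,
\]
i.e.\ the second equality.

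The only non-automatic point, and the one I would spend the most care on, is the measurable decomposition of $F^{H(-,1)}_i$ into $i$ holonomy-related Borel sheets over its image. The continuity of $H(-,1)$ makes $F^{H(-,1)}_i$ $\sigma$-compact (as noted right after Definition~\ref{d:degreepreimage}), and the local form of $H(-,1)$ on each $T_j$ given by Remark~\ref{r:beginning} reduces this to a standard selection argument inside a finite collection of local transversals; once that is in place, holonomy invariance of $\Lambda$ delivers the factor $1/i$ and the lemma follows.
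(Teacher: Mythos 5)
Your argument is correct and is, in substance, the proof the paper intends: the paper offers no argument beyond declaring the lemma ``obvious since $H(-,1)$ is continuous,'' and your route --- partitioning $\bigsqcup_{j}T_j$ by the $i$-preimage sets, observing that their images under $H(-,1)$ are pairwise disjoint transverse sets, and then using holonomy invariance of $\LB$ on a measurable decomposition of each $F^{H(-,1)}_i$ into $i$ sheets to produce the factor $1/i$ --- is exactly the natural expansion of that one-line claim. The only step you should justify more carefully is the opening assertion that condition~(1) of Remark~\ref{r:beginning} forces $F^{H(-,1)}_i=\emptyset$ for $i>k$: condition~(1) only bounds the intersection of each leaf of $\FF_U$ with each $T_j$, whereas a fiber of $H(-,1)$ is merely constrained to lie in a single leaf of $\FF$ and could a priori contain points of several distinct $\FF_U$-leaves that are contracted to the same endpoint (possibly several of them meeting the same $T_j$), so the bound by $k$ additionally requires injectivity of $H(-,1)$ on each $T_j$ --- something implicit in the lemma's own indexing and in conditions~(2)--(3), and glossed over by the paper itself, but worth flagging rather than attributing to condition~(1) alone.
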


\begin{lemma}\label{l:ineqhomot}
Let $U$ be a regular \FF-categorical open set, let $H:U\times I\to
M$ be an \FF-contraction, let $T_{1},\dots,T_{k}$ be transversals
satisfying conditions of Remark~~\ref{r:beginning}, and let \LB\;
be a transverse invariant measure. Let $O$ be an open set
containing $H(U\times\{1\})=\bigcup_{i=1}^k H(T_i\times\{1\})$  such that
each leaf of $\FF_O$ meets $H(U\times\{1\})$. Let $G$ be an
\FF-contraction for $O$. Then
\begin{align*}
\widetilde{\LB}(H(U\times\{1\}))\geq\widetilde{\LB}(H*G(U\times\{1\}))\;.
\end{align*}
\end{lemma}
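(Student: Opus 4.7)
The plan is to reduce the desired inequality to a leaf-preserving measure comparison on transverse sets. Setting $S := H(U \times \{1\})$ and $\psi := G(-,1)|_{S}$, Definition~\ref{d:operation homotopy} gives $(H*G)(x,1) = G(H(x,1),1)$, whence $(H*G)(U \times \{1\}) = \psi(S)$. Both $S$ and $\psi(S)$ are transverse sets of $\FF$ by Lemma~\ref{l:vogt0}: the former is a finite union of local differentiable transversals (by the construction in Remark~\ref{r:beginning}), and the latter sits inside the transverse set $G(O \times \{1\})$. Since $G$ is a tangential homotopy, $\psi$ is continuous and carries each $s \in S$ to a point of the same leaf of $\FF$ as $s$. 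Thus the lemma reduces to the inequality $\widetilde{\LB}(S) \geq \widetilde{\LB}(\psi(S))$ for a continuous, leaf-preserving, surjective map $\psi: S \to \psi(S)$ between transverse sets.

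To establish this inequality, I would combine a Borel selection with the invariance of the extension $\widetilde{\LB}$ to transverse sets under leaf-preserving Borel isomorphisms (a result of Connes, also referenced in the Edwards--Millett--Sullivan framework recalled in Section~2). A measurable selection theorem such as Kuratowski--Ryll-Nardzewski, applied to $\psi|_S$, produces a Borel section $\sigma: \psi(S) \to S$ of $\psi$. The image $\sigma(\psi(S))$ is then a Borel subset of $S$, and $\psi$ restricts to a Borel bijection from $\sigma(\psi(S))$ onto $\psi(S)$ that keeps each point in its own leaf of $\FF$. Invariance of $\widetilde{\LB}$ under such bijections gives $\widetilde{\LB}(\sigma(\psi(S))) = \widetilde{\LB}(\psi(S))$, and monotonicity on the inclusion $\sigma(\psi(S)) \subset S$ yields
$$
\widetilde{\LB}(S) \;\geq\; \widetilde{\LB}(\sigma(\psi(S))) \;=\; \widetilde{\LB}(\psi(S)),
$$
which is the desired bound.

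The main obstacle I anticipate is verifying the measurable selection rigorously in this setting: one must check that $S$ is a Borel subset of $M$ (it is a finite union of embedded local transversals) and that the fibers of $\psi|_S$ are countable, which holds because $S$ meets each leaf of $\FF$ in a countable set and the fibers are contained in single leaves of $\FF_O \subset \FF$. Once these technicalities are in place, the proof is short and bypasses the combinatorial fiber-counting formula of Lemma~\ref{supra1}, working directly with the leaf-preserving measure-theoretic structure of $\psi$. As a sanity check, if $\widetilde{\LB}(\psi(S))$ is infinite, then $\widetilde{\LB}(S) \geq \widetilde{\LB}(\sigma(\psi(S))) = \infty$ as well, so the inequality is automatic in that extreme case.
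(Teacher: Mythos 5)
Your proof is correct, but it takes a genuinely different route from the paper's. The paper's own argument is a one-line combinatorial one: writing $F^{H(-,1)}_{i}$ for the $i$-preimage sets of $H(-,1):\bigcup_{j}T_{j}\to M$, it observes that composing with $G$ can only merge fibres, so $\bigcup_{i=1}^{j}F^{H*G(-,1)}_{i}\subset\bigcup_{i=1}^{j}F^{H(-,1)}_{i}$ for every $j$, and the inequality then follows from the explicit fibre-counting formula $\widetilde{\LB}(H(U\times\{1\}))=\sum_{i=1}^{k}\frac{1}{i}\,\LB(F^{H(-,1)}_{i})$ of Lemma~\ref{supra1} together with the monotonicity of the coefficients $1/i$. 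You instead reduce everything to the general principle that a leaf-preserving Borel map with countable fibres between transverse sets does not increase $\widetilde{\LB}$, obtained from a Lusin--Novikov/Kuratowski--Ryll-Nardzewski selection plus the invariance of the extended measure under leaf-preserving Borel bijections --- the same invariance of the Connes extension that underlies Lemma~\ref{l:invariance1}. What your approach buys is generality and independence from the finite-fibre structure of Remark~\ref{r:beginning}: it applies verbatim to any transverse set and any leafwise deformation, and it handles the infinite-measure case automatically. What the paper's approach buys is that it stays entirely within the elementary fibre-counting formula of Lemma~\ref{supra1}, which is reused immediately afterwards (Lemma~\ref{c:monotcomp} and the semicontinuity argument), and avoids invoking measurable selection theorems. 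The technical points you flag are indeed fine: $S$ is a finite union of continuous images of $\sigma$-compact local transversals, hence Borel, and the fibres of $\psi|_{S}$ lie in single leaves, hence in the countable sets $S\cap L$, so the selection theorem applies.
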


\begin{proof}
Observe that $\bigcup_{i=1}^j F^{H( -
,1)}_{i}\supset\bigcup_{i=1}^j F^{H*G( - ,1)}_{i}$ for $1\leq
j\leq k$, and then use Lemma~~\ref{supra1}.
\end{proof}

The following lemma has an analogous proof.

\begin{lemma}\label{c:monotcomp}
Under the same hypothesis, let $K$ be a \sg-compact subset of $O$.
Then
$$\widetilde{\LB}(H(U\times\{1\}))\geq\widetilde{\LB}(G(K\times\{1\}))\;.$$
\end{lemma}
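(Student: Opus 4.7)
The plan is to deduce the result from Lemma~\ref{l:ineqhomot} via an elementary set inclusion, exploiting the leafwise constancy of $G(-,1)$.

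First, I would establish the inclusion
$$
  G(K\times\{1\})\subset H*G(U\times\{1\})\;.
$$
Pick $k\in K\subset O$, and let $L$ be the leaf of $\FF_O$ through $k$. By the standing hypothesis on $O$, the leaf $L$ meets $H(U\times\{1\})$ in some point $h$. Since $G$ is an $\FF$-contraction of $O$, the map $G(-,1)$ is constant on $L$, so $G(k,1)=G(h,1)$; hence $G(k,1)\in G(H(U\times\{1\})\times\{1\})=H*G(U\times\{1\})$, as claimed.

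Next, both sides of this inclusion are transverse sets by Lemma~\ref{l:vogt0} (time-$1$ images of tangential contractions are countable unions of local transversals), so $\widetilde{\LB}$ is defined on both. Monotonicity of $\widetilde{\LB}$ together with Lemma~\ref{l:ineqhomot} yields
$$
  \widetilde{\LB}(G(K\times\{1\}))\leq\widetilde{\LB}(H*G(U\times\{1\}))\leq\widetilde{\LB}(H(U\times\{1\}))\;,
$$
which is exactly the stated inequality. I see no genuine obstacle: the only mild care required is to read \emph{leaf of $\FF_O$} as a connected component of the intersection of a leaf of $\FF$ with $O$, which is precisely the sense in which the hypothesis on $O$ (and the definition of tangential contraction) is formulated; the $\sigma$-compactness of $K$ plays no active role in the argument beyond ensuring measurability.
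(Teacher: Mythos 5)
Your proposal is correct. The paper itself gives no argument beyond the remark that the lemma ``has an analogous proof'' to Lemma~\ref{l:ineqhomot}, which means rerunning the preimage-counting scheme: comparing the sets $F^{f}_{i}$ of Definition~\ref{d:degreepreimage} and applying the formula of Lemma~\ref{supra1}. You take a genuinely different and cleaner route, turning the statement into an actual corollary of Lemma~\ref{l:ineqhomot}: the leafwise constancy of $G(-,1)$ on the leaves of $\FF_O$, together with the hypothesis that every leaf of $\FF_O$ meets $H(U\times\{1\})$, gives the inclusion $G(K\times\{1\})\subset G(O\times\{1\})=G(H(U\times\{1\})\times\{1\})=H*G(U\times\{1\})$, after which monotonicity of $\widetilde{\LB}$ and Lemma~\ref{l:ineqhomot} conclude. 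What your approach buys is that it bypasses a second pass through the $F^{f}_{i}$ bookkeeping, which would be mildly awkward for an arbitrary $\sigma$-compact $K$ (it is not a priori a union of transversals meeting each leaf of $\FF_O$ at most once, as Lemma~\ref{supra1} requires of its domain); what the paper's route buys is only uniformity of method across the two lemmas. Your two cautions --- that leaves of $\FF_O$ are connected components of $L\cap O$, and that the $\sigma$-compactness of $K$ serves only to make $G(K\times\{1\})$ measurable for $\widetilde{\LB}$ --- are exactly the points that need to be made explicit.
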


\begin{prop}\label{3conditions}
Let $(\FF,\LB)\in\WW\MeasFol_{p}^{1}(M)$, let $U$ be a regular
\FF-categorical open set, let $H:U\times I\to M$ be an
\FF-contraction and let $W$ be an open set such that
$\overline{W}\subset U$. Let $T_{1},\dots,T_{k}$ be transversals
satisfying the conditions of Remark~\ref{r:beginning}, let $K$ be
a closed or open set in $\bigsqcup_{i=1}^{k}T_i$, and let
$\varepsilon,\delta>0$. Then, for every sequence $(\FF_n,\LB_n)$
converging to $(\FF,\LB)$ in $\WW\MeasFol_{p}^{1}(M)$, there exists
$N\in\N$ such that, $\forall n\geq N$,
\begin{enumerate}

\item[(i)] the transversals $T_1,\dots,T_k,H(T_{1}\times\{1\}),\dots,H(T_{k}\times\{1\})$ of \FF\ are also transversals of $\FF_n$;

\item[(ii)] $|\LB(K)-\LB_n(K)|<\varepsilon$; and

\item[(iii)] there exists a $C^1$ tangential homotopy $H^{\FF_{n}}$
for $\FF_n$ defined on $\overline{W}$ for all $n$ such that
$|H(x,t)-H^{\FF_n}(x,t)|<\delta$ for $x\in \overline{W}$.

\end{enumerate}
\end{prop}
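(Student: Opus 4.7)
The plan is to verify the three conditions separately, obtaining bounds $N_1,N_2,N_3\in\N$, and then set $N=\maxim\{N_1,N_2,N_3\}$. Throughout, I will use the fact that convergence in $\WW\MeasFol_{p}^{1}(M)$ entails, by definition, convergence $\FF_n\to\FF$ in $\Fol_{p}^{1}(M)$, so all $C^1$-open conditions on the foliation are available.

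For (i), transversality of a fixed $C^1$ submanifold to a foliation is a $C^1$-open condition, as noted in Remark~\ref{Vog2}. Since we only have finitely many submanifolds to worry about, namely $T_1,\dots,T_k$ and (by Remark~\ref{r:beginning}(3)) the differentiable transversals $H(T_1\times\{1\}),\dots,H(T_k\times\{1\})$, applying Remark~\ref{Vog2} $2k$ times and intersecting the resulting neighborhoods of $\FF$ in $\Fol_{p}^{1}(M)$ gives an $N_1$ such that for $n\geq N_1$ all these submanifolds are transverse to $\FF_n$ as well.

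For (iii), I apply Proposition~\ref{uso2} directly to the tangential homotopy $F=H:U\times I\to M$ with compact set $\overline{W}\subset U$ and the prescribed $\delta$. This yields a $\Fol_{p}^{1}(M)$-neighborhood $\VV_\delta$ of $\FF$ such that every $\GG\in\VV_\delta$ admits a $\GG$-tangential homotopy $G:U\times I\to M$ with $G(-,0)=\id_U$ and $|H-G|<\delta$ on $\overline{W}\times I$. Since $\FF_n\to\FF$ in $\Fol_{p}^{1}(M)$, there is $N_3$ with $\FF_n\in\VV_\delta$ for $n\geq N_3$, and I take $H^{\FF_n}$ to be the restriction of the corresponding $G$ to $\overline{W}\times I$.

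The main obstacle is (ii), because weak convergence of measures does not directly control the measure of a given Borel set. Here I will use the regularity noted in Remark~\ref{r:regularity}. After step (i), for $n\geq N_1$ the $T_i$ are transversals of $\FF_n$, so $\LB_n$ makes sense on $K\subset \bigsqcup_iT_i$; I also extend $\bigsqcup_iT_i$ to a complete differentiable transversal $T$ of $\FF$, on which the defining property of convergence in $\WW\MeasFol_{p}^{1}(M)$ gives $\int_T f\,d\LB_n\to \int_T f\,d\LB$ for every continuous $f:T\to\R$. For the given $\varepsilon>0$, I choose a compact $K^-\subset K$ and a relatively compact open $K^+\supset K$ in $T$ with $\LB(K^+\setminus K^-)<\varepsilon/3$ (possible since $K$ is open or closed and $\LB$ is both externally regular on $\sg$-compact sets and internally regular on open sets); Urysohn's lemma then produces continuous $f_-,f_+:T\to[0,1]$ with compact support, satisfying $f_-\leq \chi_K\leq f_+$, $f_-=1$ on $K^-$ and $f_+=0$ off $K^+$. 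Weak convergence applied to $f_-$ and $f_+$ provides $N_2$ such that, for $n\geq N_2$,
\[
  \LB(K)-\varepsilon < \int f_-\,d\LB_n \leq \LB_n(K)\leq \int f_+\,d\LB_n < \LB(K)+\varepsilon\;,
\]
which is the desired inequality. Setting $N=\maxim\{N_1,N_2,N_3\}$ finishes the proof; the open and closed cases of $K$ differ only cosmetically in which of $K^\pm$ sits inside/outside $K$.
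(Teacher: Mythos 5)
Your proof is correct and follows essentially the same route as the paper, whose own proof is just a three-line sketch deriving (i) from Remark~\ref{Vog2}, (ii) from Urysohn's lemma together with the regularity of Remark~\ref{r:regularity}, and (iii) from Proposition~\ref{uso2}. You have simply filled in the details of that same argument, in particular the sandwiching of $\chi_K$ between Urysohn functions to exploit weak convergence.
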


\begin{proof}
Clearly,~(i) is a consequence of Remark~\ref{Vog2}. Observe that~(ii) is obtained by
using Urysohn's lemma and the regularity of
the measure (see Remark~\ref{r:regularity}). Finally,~(iii) is a direct consequence of
Proposition~\ref{uso2}.
\end{proof}

The following assertion is easy to prove. In the following, we use the notation $A_i=H(T_i\times\{1\})$.

\begin{lemma}\label{l:neighborhoods}
There exists a collar
$C_i$ of the boundary of $A_i$ such that
$H(\overline{W}\times\{1\})\subset\bigcup_{i=1}^k A_i\setminus
C_i$.
\end{lemma}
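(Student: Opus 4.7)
The plan is to exploit compactness. I would first observe that $K:=H(\overline{W}\times\{1\})$ is compact (continuous image of the compact set $\overline{W}$) and that by hypothesis $K\subset H(U\times\{1\})=\bigcup_{i=1}^k A_i$.

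Next, I would use condition (3) of Remark~\ref{r:beginning}, which tells us each $A_i=H(T_i\times\{1\})$ is a differentiable embedded transversal. Thus $A_i$ is relatively open inside a slightly larger ambient $(n-p)$-dimensional transversal submanifold, with topological boundary $\partial A_i=\overline{A_i}\setminus A_i$. The tubular neighborhood theorem applied to $\partial A_i$ provides inner collar neighborhoods $C_i(t)\subset A_i$ of arbitrary width $t>0$, and the family $\{A_i\setminus C_i(t)\}_{t>0}$ increases monotonically to $A_i$ as $t\downarrow 0$.

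The key step is then a finite-cover argument. Since $A_i$ is relatively open in the ambient transversal and every $x\in K$ lies in some $A_i$, one can produce a finite cover $K=\bigcup_j K_j$ by compact subsets with each $K_j$ entirely contained in some $A_{i(j)}$ (for each $x\in K$ pick $i$ with $x\in A_i$ and a small closed neighborhood of $x$ contained in $A_i$; extract a finite subcover by compactness of $K$). For such a covering, each distance $\delta_j:=d(K_j,\partial A_{i(j)})$ is strictly positive, and choosing $t<\min_j\delta_j$ yields $K_j\subset A_{i(j)}\setminus C_{i(j)}(t)$ for every $j$, hence $K\subset\bigcup_{i=1}^k(A_i\setminus C_i(t))$. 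Setting $C_i:=C_i(t)$ completes the proof.

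The only subtle point is the interpretation of ``boundary of $A_i$'' and of its collar; it relies on viewing $A_i$ as an open subset of an ambient transversal submanifold (which is automatic from the embedded-transversal hypothesis). Consequently the whole argument reduces to the soft topological fact that any compact set contained in a finite union of open sets remains contained in a union of slightly shrunken open subsets, which is presumably the reason the authors call the assertion ``easy.''
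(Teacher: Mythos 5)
Your overall strategy (compactness of $K=H(\overline{W}\times\{1\})$ plus uniform shrinking of the $A_i$) is the right instinct, but the finite-cover step has a genuine gap, and the ``soft topological fact'' you invoke at the end is false in the generality in which you use it. The sets $A_i$ are $(n-p)$-dimensional embedded transversals, so they are \emph{not} open in $M$, only locally closed; consequently $K\cap A_i$ need not be relatively open in $K$, and a point $x\in K\cap A_i$ need not admit any neighborhood \emph{in $K$} contained in $A_i$ (points of $K$ arbitrarily close to $x$ may lie only in some other $A_{i'}$, and arbitrarily close to the frontier of that $A_{i'}$). For locally closed sets the shrinking principle fails: with the horizontal foliation of the plane, take $A_1=\{0\}\times(0,1)$, let $A_2$ be a second open transversal containing the origin but none of the points $(0,1/n)$, and let $K=\{(0,1/n)\mid n\in\N\}\cup\{(0,0)\}$; then $K$ is a compact subset of $A_1\cup A_2$, yet $K\not\subset(A_1\setminus C_1)\cup(A_2\setminus C_2)$ for any nonempty collars. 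Equivalently, the function $\max_i d(\cdot,\overline{A_i}\setminus A_i)$ (set to $0$ off $A_i$) is positive on $K$ but only upper semicontinuous, so your uniform width $t<\min_j\delta_j$ is not justified by compactness alone.

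The missing ingredient is the foliated structure on the \emph{domain} side. Since $H(-,1)$ is constant on the leaves of $\FF_U$ and, by condition (1) of Remark~\ref{r:beginning}, every leaf of $\FF_U$ meets some $T_i$, one has $U=\bigcup_i\str_{\FF_U}(T_i)$, and the $\FF_U$-saturation of an open transversal is an open subset of $U$ (local triviality in foliated charts). Writing $T_i$ as the increasing union of the shrunken transversals $T_i^t$ obtained by deleting a width-$t$ collar, the sets $\str_{\FF_U}(T_i^t)$ form an open cover of the compact set $\overline{W}$, so a finite subfamily, hence a single $t_0>0$, suffices: $\overline{W}\subset\bigcup_i\str_{\FF_U}(T_i^{t_0})$. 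Applying $H(-,1)$ yields $K\subset\bigcup_i H(T_i^{t_0}\times\{1\})$, and each $H\bigl(\overline{T_i^{t_0}}\times\{1\}\bigr)$ is a compact subset of the embedded transversal $A_i$, hence avoids some collar $C_i$ of its boundary. The paper offers no proof (it calls the assertion easy), but this is presumably the intended argument; the essential point your proposal misses is that the compactness must be exploited upstairs in $\overline{W}$, where the covering sets are genuinely open, rather than downstairs in $K$, where they are not.
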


Let $P$ be an open set containing $\bigcup_{i=1}^k A_i\setminus
C_i$ under the conditions of Proposition~\ref{uso1}. By taking $N$
larger if necessary, $P$ is $\FF_n$-categorical for all $n\geq N$.
Let $G^{\FF_n}$ be an $\FF_n$-contraction for $P$.

\begin{prop}\label{p:open}
For $N$ large enough, there exists an open set
$P'\subset P$ such that $H(\overline{W}\times\{1\})\subset P'$ and
each leaf of $\FF_n|_{P'}$ meets $\bigcup_{i=1}^k A_i\setminus
C_i$ for all $n\geq N$.
\end{prop}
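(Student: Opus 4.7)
The plan is to cover the compact set $H(\overline{W}\times\{1\})\subset\bigcup_{i=1}^{k}(A_i\setminus C_i)$ (by Lemma~\ref{l:neighborhoods}) by finitely many small foliated charts of $\FF$ adapted to the transversals $A_i$, and to invoke the simultaneous local parametrizations of Proposition~\ref{atcon} to check that $\FF_n$-plaques inside each such chart continue to cross $\bigcup_i(A_i\setminus C_i)$.

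Concretely, for each $y\in H(\overline{W}\times\{1\})$ pick $i(y)$ with $y\in A_{i(y)}\setminus C_{i(y)}$, and use that $A_{i(y)}$ is a $C^2$ differentiable transversal of $\FF$ to choose a differentiable foliated chart $\varphi_y:B_{3}^{p}\times B_{4}^{n-p}\to U_y\subset P$ of $\FF$ in which, after a $C^1$ straightening, $A_{i(y)}$ appears as the distinguished transversal $\varphi_y(\{0\}\times B_{3}^{n-p})$ and satisfies $A_{i(y)}\cap U_y\subset A_{i(y)}\setminus C_{i(y)}$; the last containment is achievable by shrinking $U_y$ because $A_{i(y)}\setminus C_{i(y)}$ is open in $A_{i(y)}$. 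Applying Proposition~\ref{atcon} to each $\varphi_y$ yields a neighborhood $\VV_y$ of $\FF$ in $\Fol_{p}^{1}(M)$ and, for every $\GG\in\VV_y$, a simultaneous parametrization $\varphi_y\circ g^\GG$ of a $\GG$-foliated chart $U_y^\GG\supset\varphi_y(B_{3}^{p}\times B_{1}^{n-p})$ whose plaques are the graphs $\{\varphi_y(x,g_t^\GG(x))\mid x\in B_{3}^{p}\}$. Because $g^\GG\to\id$ uniformly as $\GG\to\FF$ and $A_{i(y)}$ remains transverse to $\GG$ (Remark~\ref{Vog2}), shrinking $\VV_y$ forces $g_{t}^\GG(0)\in B_{3}^{n-p}$ for every $t\in B_{2}^{n-p}$, so every such $\GG$-plaque inside $\varphi_y(B_{3}^{p}\times B_{1}^{n-p})$ cuts $A_{i(y)}\cap U_y$ transversally in a point which, by construction, lies in $A_{i(y)}\setminus C_{i(y)}$.

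By compactness I then extract a finite subcover $\{\varphi_{y_s}(B_{3}^{p}\times B_{1}^{n-p})\}_{s=1}^{r}$ of $H(\overline{W}\times\{1\})$ and set $P'=\bigcup_{s=1}^{r}\varphi_{y_s}(B_{3}^{p}\times B_{1}^{n-p})\subset P$. By construction $P'$ is open and $H(\overline{W}\times\{1\})\subset P'$. Enlarging the previously chosen $N$ so that also $\FF_n\in\bigcap_s\VV_{y_s}$ for $n\geq N$, any $x\in P'$ lies in some $\varphi_{y_s}(B_{3}^{p}\times B_{1}^{n-p})$; the $\FF_n$-plaque through $x$ in this chart meets $A_{i(y_s)}\setminus C_{i(y_s)}$, and this plaque is contained in the leaf of $\FF_n|_{P'}$ through $x$, so that leaf meets $\bigcup_i A_i\setminus C_i$, as required.

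The main obstacle is the claim, in the middle paragraph, that every $\GG$-plaque in $\varphi_y(B_{3}^{p}\times B_{1}^{n-p})$ really hits the distinguished transversal $A_{i(y)}$: Proposition~\ref{atcon} only guarantees the graph structure and the sweeping property (ii), from which this must be extracted using the continuity of $g^\GG$ in $\GG$ at $\GG=\FF$ together with the persistence of transversality. A subsidiary technical point is the initial straightening of $A_{i(y)}$ inside the chart, a standard implicit-function-theorem argument in the $C^2$ setting.
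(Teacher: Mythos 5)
Your overall skeleton (cover the compact set $H(\overline{W}\times\{1\})$ by finitely many charts adapted to the transversals $A_i$, shrink so as to stay inside $A_i\setminus C_i$, and invoke Proposition~\ref{atcon} for foliations close to $\FF$) matches the paper's reduction, which likewise picks transversals $S_1,\dots,S_k$ with $\overline{S_i}\subset A_i\setminus C_i$ covering $H(\overline{W}\times\{1\})$ and works chart by chart. The obstacle you flag at the end (whether a $\GG$-plaque in the box actually hits the straightened transversal) is the easier half and does follow from properties (ii)--(iii) of Proposition~\ref{atcon} together with the normalization $g_t(0)=t$. But there is a more serious gap that your $P'=\bigcup_s\varphi_{y_s}(B_{3}^{p}\times B_{1}^{n-p})$ does not address: the conclusion is about leaves of the \emph{restriction} $\FF_n|_{P'}$, i.e.\ connected components of $L\cap P'$. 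Your argument produces a point of $A_{i(y_s)}$ on the ambient plaque through $x$, but that point is $(0,t)$ with $t$ only known to lie in $B_{2}^{n-p}$, and the path along the graph $x'\mapsto(x',g_t(x'))$ from $x$ to $(0,t)$ need not stay in $B_{3}^{p}\times B_{1}^{n-p}$: one only knows $g_t(u)\in B_{1}^{n-p}$ at the single point $u$ with $\varphi_{y_s}(u,g_t(u))=x$, and for $w$ near $\partial B_{1}^{n-p}$ the graph can leave the box before reaching $\{0\}\times B_{1}^{n-p}$. So the connected component of $L\cap P'$ containing $x$ may fail to contain any point of $\bigcup_i(A_i\setminus C_i)$, and this is precisely the property needed later (it is the hypothesis of Lemmas~\ref{l:ineqhomot} and~\ref{c:monotcomp}).

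The paper's proof is built around exactly this difficulty. After reducing, via simultaneous local parametrizations and the foliated metric, to the model of parallel $p$-planes in $\R^n$, it establishes a cone estimate (Lemma~\ref{l:analysis} and Remark~\ref{r:result}): for $\GG$ with $d(\FF,\GG)<\varepsilon$, every $\GG$-leaf through a point $a$ stays in the cone $R_a(\varepsilon,\infty)$, so its transverse drift over a plaque-displacement of length $\rho$ is at most $\varepsilon\rho$. It then takes the tube $U(T,\rho)=B^{p}_{\rho}\times B_{1/2}^{n-p}$ around the transversal $T=\{0\}\times B_{1/2}^{n-p}$ and \emph{deletes} the compact set $K=\bigcup_{x\in\partial T}R_x$, the union of the cone regions over the boundary of $T$; since $R_x$ contains every nearby-foliation leaf through $x$, a point of $P(T,\rho)=U(T,\rho)\setminus K$ cannot lie on a leaf that reaches $\partial T$, and the cone control then forces the component of every $\GG$-leaf in $P(T,\rho)$ to cross $T$ without leaving $P(T,\rho)$. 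To repair your proof you would need to replace your product boxes by such truncated tubes (or otherwise verify the in-$P'$ connectivity of the path to the transversal); as written, the construction of $P'$ is insufficient.
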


The proof of Proposition~\ref{p:open} is technical and will be given at
the end of this section.

\begin{lemma}\label{l:demi}
Let $U$ be a regular \FF-categorical open set, let $H$ be an
\FF-contraction of $U$ and let $W$ be an open set such that
$\overline{W}\subset U$. Then, for all $\varepsilon>0$ and for all
sequence $(\FF_n,\LB_n)$ converging to $(\FF,\LB)$ in
$\WW\MeasFol^{1}_p(M)$, there exists some $N\in\N$ and there exists
an $\FF_n$-contraction $J^{n}$ of $W$ for all $n\geq N$ such
that
\begin{align*}
\widetilde{\LB}(J^{n}(W\times\{1\}))\leq
\widetilde{\LB}(H(U\times\{1\}))+\varepsilon\;.
\end{align*}
\end{lemma}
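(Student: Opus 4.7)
The plan is to build the required $\FF_n$-contraction $J^n$ as the concatenation $J^n=H^{\FF_n}*G^{\FF_n}$ in the sense of Definition~\ref{d:operation homotopy}, where $H^{\FF_n}$ will be a small $\FF_n$-perturbation of $H|_{\overline{W}\times I}$ and $G^{\FF_n}$ is the $\FF_n$-contraction of $P$ chosen just before the lemma. First I would fix an auxiliary $\delta>0$. By Lemma~\ref{l:neighborhoods} one has $H(\overline{W}\times\{1\})\subset\bigcup_i A_i\setminus C_i\subset P'$, so for $\delta$ small enough, any map $\delta$-close to $H$ on $\overline{W}\times I$ in the sup norm will send $\overline{W}\times\{1\}$ into $P'$. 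Proposition~\ref{3conditions}(iii) then supplies, for all $n$ large, an $\FF_n$-tangential homotopy $H^{\FF_n}:\overline{W}\times I\to M$ within $\delta$ of $H$, after which $J^n$ is well defined and can be restricted to $W$.

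The next step would be to check that $J^n$ is an $\FF_n$-contraction of $W$ by a connectedness argument: for a leaf $L$ of $\FF_n|_W$, the image $H^{\FF_n}(L,1)$ is connected, lies in a single leaf of $\FF_n$, and is contained in $P'$, hence is contained in a single leaf of $\FF_n|_P$ (since $P'\subset P$), which $G^{\FF_n}$ collapses to a point. For the measure bound I would establish the inclusion
\[
J^n(\overline{W}\times\{1\})\subset G^{\FF_n}\bigl(\textstyle\bigcup_i A_i\times\{1\}\bigr):
\]
given $x\in\overline{W}$, the $\FF_n|_{P'}$-leaf through $H^{\FF_n}(x,1)$ meets $\bigcup_i A_i\setminus C_i$ by Proposition~\ref{p:open}, so $H^{\FF_n}(x,1)$ and some point of $\bigcup_i A_i$ lie in a common leaf of $\FF_n|_P$, along which $G^{\FF_n}(-,1)$ is constant. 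Since $G^{\FF_n}(-,1)$ keeps points in their $\FF_n$-leaves, holonomy invariance of $\LB_n$ will then yield $\widetilde{\LB_n}(J^n(W\times\{1\}))\leq\LB_n(\bigcup_i A_i)$. Finally, Proposition~\ref{3conditions}(ii) applied to the $A_i$'s (which are $\FF_n$-transverse for $n$ large by part~(i)), together with inclusion-exclusion on the $A_i$ and the regularity in Remark~\ref{r:regularity}, gives $\LB_n(\bigcup_i A_i)\to\LB(\bigcup_i A_i)=\widetilde{\LB}(H(U\times\{1\}))$, whence the desired bound for all sufficiently large $n$.

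I expect the delicate step to be the inclusion $J^n(\overline{W}\times\{1\})\subset G^{\FF_n}(\bigcup_i A_i\times\{1\})$: without Proposition~\ref{p:open}, an $\FF_n|_P$-leaf passing through $H^{\FF_n}(\overline{W},1)$ need not revisit the target $\bigcup_i A_i$ of the original $\FF$-contraction, and the pushforward bound would collapse. The refinement from $P$ to the smaller $P'$ is exactly what guarantees the required leaf-wise reachability property, which is why Proposition~\ref{p:open} has been stated separately and why the collars $C_i$ of Lemma~\ref{l:neighborhoods} have been introduced to give room for the perturbation.
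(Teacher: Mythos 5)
Your proposal follows essentially the same route as the paper's proof: perturb $H$ on $\overline{W}$ to an $\FF_n$-tangential homotopy landing in $P'$ (Proposition~\ref{3conditions}), concatenate with the $\FF_n$-contraction of $P$ supplied by Proposition~\ref{uso1}, and use Proposition~\ref{p:open} together with holonomy invariance and the convergence $\LB_n\bigl(\bigcup_i A_i\bigr)\to\LB\bigl(\bigcup_i A_i\bigr)$ to bound the measure of the final transversal. The paper compresses your explicit inclusion $J^n(\overline{W}\times\{1\})\subset G^{\FF_n}\bigl(\bigcup_i(A_i\setminus C_i)\times\{1\}\bigr)$ into a citation of Lemma~\ref{c:monotcomp}, but the content is the same.
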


\begin{proof}
By Remark~\ref{r:beginning}, Lemma~\ref{l:neighborhoods} and
Propositions~\ref{p:open} and~\ref{3conditions}, there exists
$N\in\N$ such that, for all $n \geq N$, there is a tangential
homotopy $H^{n}$ for $\FF^n$ with
$H^{n}(W\times\{1\})\subset P'$, where $P'$ is the open set
given by Proposition~\ref{p:open}. Moreover there exists an
$\FF_n$-contraction $G^{n}$ of $P'$ for $n\geq N$. We also can
suppose
$$
  \left|\LB_n\left(\bigcup_{i=1}^{k}A_i\right)-\LB\left(\bigcup_{i=1}^{k}A_i\right)\right|<\varepsilon
$$
for $n\geq N$ by Remark~\ref{r:beginning}-(3). Therefore, by
Lemma~\ref{c:monotcomp},
\begin{multline*}
  \LB(H(U\times\{1\}))+ \varepsilon=\LB\left(\bigcup_{i=1}^k H(T_i\times\{1\})\right)+\varepsilon\\
  \geq \LB_n\left(\bigcup_{i=1}^k H(T_i\times\{1\})\right)
  \geq\LB_n\left(\bigcup_{i=1}^k(A_i\setminus C_i)\right)\\
  \geq\LB_n\left(\bigcup_{i=1}^k P'\cap(A_i\setminus C_i)\right)
  \geq\LB_n\left(\bigcup_{i=1}^k H^{n}*G^{n}_1(W)\right)\;.\qed
\end{multline*}
\renewcommand{\qed}{}
\end{proof}

\begin{thm}[Upper semicontinuity of the topological \LB-category]\label{t:semi}The topological \LB-category map, $\Cat:\WW\MeasFol_{p}^{1}(M)\to\mathbb{R}$, is upper semicontinuous.
\end{thm}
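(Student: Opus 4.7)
The plan is to exhibit, for every sequence $(\FF_n,\LB_n)\to(\FF,\LB)$ in $\WW\MeasFol_{p}^{1}(M)$ and every $\varepsilon>0$, an index $N\in\N$ with $\Cat(\FF_n,\LB_n)\le\Cat(\FF,\LB)+\varepsilon$ whenever $n\ge N$. The whole argument is a finite-cover bootstrap on top of Lemma~\ref{l:demi}, which already handles the single-chart approximation.

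First, I would choose a countable $\FF$-categorical cover $\{U_i\}$ of $M$ and $\FF$-contractions $H^i$ whose images satisfy $\sum_i\LB(H^i(U_i\times\{1\}))<\Cat(\FF,\LB)+\varepsilon/3$. Compactness of $M$ lets me extract a finite subcover $U_1,\dots,U_m$, and the sum over the subcover only decreases. Next, by the shrinking lemma for the compact Hausdorff space $M$, there exist open sets $W_j$ with $\overline{W_j}\subset U_j$ and $\bigcup_j W_j=M$; and by Remark~\ref{r:openregular}, for each $j$ there is a \emph{regular} open set $V_j$ with $\overline{W_j}\subset V_j\subset\overline{V_j}\subset U_j$. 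The restriction $H^j|_{V_j\times I}$ is an $\FF$-contraction of $V_j$ whose image has $\LB$-measure at most $\LB(H^j(U_j\times\{1\}))$.

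Now I would apply Lemma~\ref{l:demi} to each triple $(V_j,\,H^j|_{V_j\times I},\,W_j)$ with tolerance $\varepsilon/(3m)$: it produces $N_j\in\N$ and $\FF_n$-contractions $J^{n,j}$ of $W_j$ for every $n\ge N_j$ satisfying
\[
  \LB_n\!\left(J^{n,j}(W_j\times\{1\})\right)\le\LB\!\left(H^j(U_j\times\{1\})\right)+\varepsilon/(3m)\;.
\]
Setting $N=\max_{1\le j\le m}N_j$, the family $\{W_j\}_{j=1}^m$ is a finite $\FF_n$-categorical cover of $M$ for all $n\ge N$, and summing the estimates gives
\[
  \Cat(\FF_n,\LB_n)\le\sum_{j=1}^m\LB_n\!\left(J^{n,j}(W_j\times\{1\})\right)<\Cat(\FF,\LB)+\varepsilon\;,
\]
which is the desired upper semicontinuity.

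The substantive work has been offloaded to Lemma~\ref{l:demi} (which itself rests on Proposition~\ref{p:open}); at the level of the theorem the only real obstacle is the uniform choice of $N$. That forces two design decisions: the reduction to a \emph{finite} subcover, made possible by compactness of $M$ (so that $N=\max_j N_j$ is actually finite), and the insertion of the intermediate \emph{regular} open set $V_j$ between $W_j$ and $U_j$ (needed so that Lemma~\ref{l:demi}'s regularity hypothesis is met while keeping $\overline{W_j}\subset V_j$ on the inside and an $\FF$-contraction available via restriction of $H^j$ on the outside).
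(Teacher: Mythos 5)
Your proof is correct and follows essentially the same route as the paper: a finite categorical cover nearly realizing $\Cat(\FF,\LB)$, shrinking to $\overline{W_j}\subset U_j$ with an intermediate regular open set supplied by Remark~\ref{r:openregular}, then Lemma~\ref{l:demi} applied chartwise with tolerance $\varepsilon/(3m)$ and $N=\max_j N_j$. Your explicit extraction of a finite subcover and the explicit nesting $\overline{W_j}\subset V_j\subset\overline{V_j}\subset U_j$ only make precise what the paper compresses into ``we can suppose each $U_i$ is regular.''
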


\begin{proof}
We have to prove that, for all $\varepsilon>0$ and for any
sequence $(\FF_n,\LB_n)$ converging to $(\mathcal{F},\Lambda)$ in
$\WW\MeasFol_{p}^{1}(M)$, there exists $N\in\N$ such that
$\Cat(\FF_n,\LB_n)\leq \Cat(\mathcal{F},\LB) + \varepsilon$ for
all $n\geq N$. Take a finite covering $\{U_{1},\dots,U_{K}\}$ of $M$
by \FF-categorical open sets of $M$, and let $H^1,\dots,H^n$ be
\FF-contractions such that
$$
  \sum_{i=1}^K \LB(H^i(U_{i}\times\{1\})) \leq \Cat(\mathcal{F},\LB) + \frac{\varepsilon}{2}\;.
$$

By paracompactness and Remark~\ref{r:openregular}, we can suppose
that each $U_i$ is regular. By paracompactness again, there is an
open covering $\{W_{i}\}_{i=1,\dots,K}$ of $M$ such that
$\overline{W_{i}}\subset U_{i}$. By Lemma~\ref{l:demi}, there
exists $N\in\N$, and there are $\FF_n$-contractions
$H^{n,i}:W_i\times I\to M$ for $1\leq i\leq K$ and $n\geq N$
such that
 $$\LB_n(H^{n,i}(W_{i}\times\{1\}))\leq \LB(H^i(U_{i}\times\{1\}))+
\frac{\varepsilon}{2K}\;.$$ Finally, for all $n\geq N$, we get
\begin{multline*}
\Cat(\mathcal{F}_n,\LB_n)\leq
\sum_{i=1}^K\LB_n(H^{n,i}(W_i\times\{1\}))\\
\leq \sum_{i=1}^K\LB(H^i(U_i\times\{1\})) + \frac{\varepsilon}{2}
\leq \Cat(\mathcal{F},\LB) + \varepsilon\;.\qed\\
\end{multline*}
\renewcommand{\qed}{}
\end{proof}

Now, we prove Proposition~\ref{p:open} to conclude this section.
We restrict to the case where $T$ is a local differentiable
embedded transversal contained in a simultaneous local
parametrization. This case easily implies the general case.

Let us introduce a metric on $\Fol_{p}^{1}(M)$. Remember that a
$C^2$ foliation \FF\ can be identified to a $C^1$-map
$\FF:M\to\R^{N^2}\equiv\LL(\R^N,\R^N)$. For each $x\in M$, the corresponding linear map
$\FF_x:\R^N\to\R^N$ is the orthogonal projection of $\R^N$ onto
$T_x\FF$, where an embedding of $M$
into $\R^N$ is considered, and $T_x\FF\subset T_x\R^N\equiv\R^N$ denotes the subspace tangent to $\FF$ at $x$.

\begin{defn}
For foliations $\FF$ and $\GG$ on $M$ with the same dimension, let
\begin{align*}
d(\FF,\GG)=\maxim_{x\in M}\|\FF_x-\GG_x\|\;.
\end{align*}
This map $d$ is called the {\em foliated metric\/} on $\Fol_p^1(M)$.
\end{defn}

\begin{rem}
Clearly, $d$ is a metric on $\Fol_{p}^{1}(M)$ that induces its
topology; i.e., $\FF_n\to\FF$ in
$\Fol_{p}^{1}(M)$ if and only $d(\FF_n,\FF)\to0$.
\end{rem}

Since Proposition~\ref{p:open} is of local nature, we interpret
the foliated metric from the point of view of simultaneous local
parametrizations. Consider the notation of
Proposition~\ref{atcon}. Let $\Phi=\{\varphi\circ g\;|\;\GG\in
\VV\}$ be a simultaneous local parametrization around a point
$a=\varphi(0,0)\in M$, according to Definition~\ref{d:atcon}.

\begin{defn}
Let $\FF,\GG\in \VV$. Define the {\em $\varphi$-foliated metric\/} on $\varphi\in\Phi$
by
\begin{align*}
d_\varphi(\FF,\GG)=\maxim_{x\in D^p_3\times
D_1^{n-p}}\|\FF_x-\GG_x\|\;.
\end{align*}
\end{defn}

Obviously, $\FF_n$ converges to \FF\ in $\Fol^1_p(M)$ if and only if it converges to \FF\ with respect to the foliated metric on any
simultaneous local parametrization relative to \FF. Therefore we
can restrict to the simple case where $M=\R^n$ and $\FF$  is the
standard foliation by parallel planes with dimension $p$. For this,
we take a foliated diffeomorphism $\alpha:\intr(D^p_3\times D_1^{n-p})\to\R^n$ with $\alpha(0)=0$.

\begin{defn}
Let \FF\ be the foliation of dimension $p$ on $\R^n$ defined by
planes parallel to the plane $x_{p+1}=\dots=x_n=0$. For all
$\varepsilon,\delta>0$, let
\begin{multline*}
  R_0(\varepsilon,\delta)\\
  =\{\,(x_1,\dots,x_n)\mid\|(x_{p+1},\dots,x_n)\|\leq\varepsilon\|(x_1,\dots,x_p)\|,\ \|(x_1,\dots,x_n)\|\leq\delta\,\}\;.
\end{multline*}
For $a\in M$, let $R_a(\varepsilon,\delta)=a+R_0(\varepsilon,\delta)$.
\end{defn}

\begin{rem}
We have $d(\FF,\GG)<\varepsilon$ if and only if $T_a\GG\subset
R_a(\varepsilon,\infty)\ \forall a\in \R^n$.
\end{rem}

The following Lemma is easy to prove.

\begin{lemma}\label{l:analysis}
Let $f:\R^p\to\R^{n-p}$ be a $C^1$ map such that $f(0)=0$ and the
graph of its differential map, $df(x)$, is contained in
$R_0(\varepsilon,\infty)$ for all $x\in\R^p$. Then the graph of
$f$ is contained in $R_0(\varepsilon,\infty)$.
\end{lemma}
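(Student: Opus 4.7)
The plan is to translate the hypothesis into a bound on the operator norm of $df(x)$ and then invoke the mean value inequality. First I would unpack what it means for the graph of the linear map $df(x):\R^p\to\R^{n-p}$ to lie in $R_0(\varepsilon,\infty)$. Since the graph consists of the points $(v,df(x)(v))$ for $v\in\R^p$ and the radius parameter is $\infty$ (so only the cone condition is in force), the inclusion is equivalent to
\[
\|df(x)(v)\|\le\varepsilon\,\|v\|\qquad\forall\,v\in\R^p,
\]
i.e., $\|df(x)\|_{\mathrm{op}}\le\varepsilon$ for every $x\in\R^p$.

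Next, since $f$ is $C^1$ on the convex set $\R^p$ with uniformly bounded differential, the mean value inequality (applied along the segment from $0$ to $x$) gives
\[
\|f(x)\|=\|f(x)-f(0)\|\le\Bigl(\sup_{t\in[0,1]}\|df(tx)\|_{\mathrm{op}}\Bigr)\|x\|\le\varepsilon\,\|x\|.
\]
Translating this back into the cone condition, the point $(x,f(x))\in\R^p\times\R^{n-p}$ satisfies $\|(f(x))\|\le\varepsilon\|x\|$, so it lies in $R_0(\varepsilon,\infty)$, which is exactly the desired statement about the graph of $f$.

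There is essentially no obstacle: the lemma is really just the observation that the pointwise cone-condition on the tangent planes (the graphs of the differentials) propagates from the infinitesimal level to the global level via integration along rays, using $f(0)=0$ as the base point. The only thing worth being careful about is the correct reading of the "graph of $df(x)$ lies in $R_0(\varepsilon,\infty)$" clause, which is purely linear algebra and yields the operator-norm bound cleanly.
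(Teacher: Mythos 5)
Your proof is correct and is precisely the argument the paper has in mind (the paper states the lemma as ``easy to prove'' and omits the details): the cone condition on the graph of $df(x)$ is exactly the operator-norm bound $\|df(x)\|\le\varepsilon$, and the mean value inequality along the ray from $0$ to $x$, together with $f(0)=0$, gives $\|f(x)\|\le\varepsilon\|x\|$, i.e.\ the graph of $f$ lies in $R_0(\varepsilon,\infty)$.
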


\begin{rem}\label{r:result}
In our setting, the graph of $f$ represents a leaf near \FF.
Therefore, for $(v,w)\in B^p_3\times B_1^{n-p}$, there exists a
compact region of the form $\overline{\alpha^{-1}(R_{\alpha(v,w)}(\varepsilon,\infty))}$ such that any leaf in $D^p_3\times D_1^{n-p}$ of $\varphi^*\GG$ through $(v,w)$
is contained in this region (by taking \VV\ smaller if necessary).
\end{rem}
Now, we can prove Proposition~\ref{p:open}. Consider the
particular case $T=\{0\}\times B_{1/2}^{n-p}$, which is an
embedded smooth local transversal contained in $B^p_3\times
B_1^{n-p}$. Define the {\em $\rho$-tube\/} of $T$ by $U(T,\rho)=
B^p_{\rho}\times B_{1/2}^{n-p}$, $0<\rho< 3$. The boundary
$\partial T$ is compact, and, for all $x\in\partial T$, let $R_x$
be a region satisfying the conditions of Remark~\ref{r:result}.
The set $K=\bigcup_{x\in\partial T}R_x$ is compact and therefore
closed. The set $P(T,\rho)=U(T,\rho)\setminus K$ is an open
neighborhood of $T$ and satisfies the required conditions for a
ball centered at \FF\ with small radius with respect to the
foliated metric.

\begin{proof}[Proof of Proposition \ref{p:open}]
Under the conditions of Proposition~\ref{p:open}, there exist $k$
transversals, $S_1,\dots,S_k$, such that $\overline{S_i}\subset
A_i\setminus C_i$ for $1\leq i\leq k$, and
$H(\overline{W}\times\{1\})\subset \bigcup_{i=1}^k S_i$. Observe
that any smooth embedded local transversal is associated to a
foliated chart. Therefore, if $P$ is a neighborhood of
$\bigcup_{i=1}^k A_i\setminus C_i$, there exists a neighborhood
$\VV$ of \FF\ in $\Fol^1_p(M)$ and there exists a neighborhood
$P(S_i,\rho_i)\subset P$ of each $S_i$ such that each leaf of
$\GG_{P(S_i,\rho_i)}$ meets $S_i$ (and therefore meets $A_i\setminus
C_i$) for all $\GG\in\VV$. The open set $\bigcup_{i=1}^k
P(S_i,\rho_i)$ satisfies the required conditions.
\end{proof}

The upper semicontinuity can be a tool to get lower bounds for the
\LB-category of a $C^2$ foliation on a closed $C^\infty$ manifold: if $(\FF_n,\LB_n)\to(\FF,\LB)$ with respect to the
weak topology, then $\limsup\Cat(\FF_n,\LB_n)\leq\Cat(\FF,\LB)$.

\begin{rem}
Measures satisfying the conditions of the Riesz Representation Theorem are regular. Therefore regularity is not a very restrictive condition on the measures.
\end{rem}


\section{Critical points}

In this section, we adapt a theorem due to  J. Schwartz \cite{Schwartz}, which states that the LS category of a separable Hilbert manifold is a lower bound of the number of critical points of any Palais-Smale function. In this section, we work with Hilbert  laminations. Thus, the foliated charts are homeomorphisms to $H\times P$, where $H$ is a separable Hilbert space and $P$ is a Polish space. The ambient space of the foliation is a Polish space, and therefore we can work with countable foliated atlases. We hope that the work of this section will be useful to study laminated versions of variational problems where the classical Lusternik-Schnirelmann category was applied \cite{Ballman,Ballman-Thorbergsson-Ziller,Klingenberg,Lusternik-Schnirelmann}. Our result is a direct corollary of a similar adaptation for usual tangential category \cite{Menino2}.

Moreover we consider $C^2$ Hilbert laminations; i.e., the change of foliated coordinates are leafwise $C^2$ whose tangential derivatives of order $\le2$ are continuous on the ambient space. Also, any lamination is assumed to have a locally finite atlas such that each plaque of every chart meets at most one plaque of any other chart. We consider a leafwise Riemannian metric  so that its tangential derivatives of order $\le2$ are continuous on the ambient space; thus each leaf becomes a Riemannian Hilbert manifold. Of course, the holonomy pseudogroup makes sense in this set-up, as well as transverse invariant measures. We only consider regular measures. Here, an open transversal is an embedded space that is locally homeomorphic to a transversal associated to a foliated chart via a projection map.

We consider functions that are $C^2$ on the leaves whose tangential derivatives of order $\le2$ are continuous on the ambient space. The functions satisfying the above property are called $C^r$, and they form a linear space denoted by $C^{2}(\FF)$. For a function $f\in C^{2}(\FF)$, we set $\Crit_{\FF}(f)=\bigcup_{L\in\FF}\Crit(f|_L)$. The definition of {\em $C^r$ Hilbert laminations\/} and their $C^r$ functions.

\begin{exmp}[Construction of a tangential isotopy \cite{Palais}]\label{e:vectorfield}
A tangential isotopy can be constructed on a Hilbert manifold by using a $C^1$ tangent vector field $V$. There exists a flow $\phi_t(p)$ such that $\phi_0(p)=p$, $\phi_{t+s}(p)=\phi_t(\phi_s(p))$ and $d\phi_t(p)/dt=V(\phi_t(p))$. From the way of obtaining $\phi$ \cite{Palais,Crandall-Pazy}, it follows that the same kind of construction for a $C^1$ tangent vector field on a measurable Hilbert lamination $(X,\FF)$ induces a tangential isotopy on $(X,\FF)$.

Now we obtain a tangential isotopy from the gradient flow of a differentiable map. It will be modified by a control function $\alpha$ in order to have some control on the deformations induced by the corresponding isotopy.
Let $\nabla f$ be the gradient tangent vector field of $f$; i.e., the
unique tangent vector field satisfying $df(v)=\langle v,\nabla
f\rangle$ for all $v\in T\FF$. Take the $C^{1}$ vector field $V=-\alpha(|\nabla f|)\,\nabla f$, where $\alpha:[0,\infty)\to\R^+$ is $C^\infty$, $\alpha(t)\equiv 1$ for $0\leq t\leq 1$, $t^2\alpha(t)$ is monotone non-decreasing and $t^2\alpha(t)=2$ for $t\geq 2$. The flow $\phi_t(p)$ of $V$ is defined for $-\infty<t<\infty$ \cite{Schwartz}, and it is called the {\em modified gradient flow\/}.
\end{exmp}

Let us define a partial order relation ``$\ll$'' for the critical
points of $f$. First, we say that $x<y$ if there exists a regular
point $p$ such that $x\in\alpha(p)$ and $y\in\omega(p)$, where
$\alpha(p)$ and $\omega(p)$ are the $\alpha$- and $\omega$-limits of
$p$. Thendefine $x\ll y$ if there exists a finite sequence of critical
points, $x_1,\dots,x_n$, such that $x < x_1 < \dots< x_n < y$.

\begin{lemma}\label{l:toplowaproxim}
Let $T\subset X$ be a transversal meeting each leaf in a discrete set, and let $\varepsilon>0$. Then there exists a \FF-categorical open set $U$ containing $T$ such that
$\Cat(U,\FF,\LB)< \LB(T) + \varepsilon$.
\end{lemma}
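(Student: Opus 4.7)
The plan is to realise $U$ as a thin leafwise tube around $T$ which contracts tangentially onto an open piece of transversal containing $T$ whose $\LB$-measure is within $\varepsilon$ of $\LB(T)$.

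First I would use the external regularity of $\LB$ (available in this section since only regular measures are considered). By covering $T$ in its ambient transversal by small open sets and summing, produce an open transversal $\widetilde{T}\supset T$ with $\LB(\widetilde{T})<\LB(T)+\varepsilon$. Then, for each $x\in T$, choose a foliated chart $(V_x,\varphi_x)$, $\varphi_x\colon V_x\to B_x\times S_x$, adapted to $\widetilde{T}$: small enough that $\widetilde{T}\cap V_x$ corresponds to an open subset of the associated local transversal $\varphi_x^{-1}(\{0\}\times S_x)$, and small enough that each plaque of $V_x$ meets $T$ in at most one point. This is possible because $T$ is a transversal and $T\cap L$ is discrete for every leaf $L$. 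Inside each such chart the map $(b,s)\mapsto (0,s)$ is a continuous leafwise deformation of $V_x$ onto its associated local transversal, whose image lies in $\widetilde{T}$.

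Extract a countable locally finite subcover $\{V_n\}$ of $T$. Let $U$ be an open neighbourhood of $T$, contained in $\bigcup_n V_n$, chosen thin enough that each plaque-component of $U\cap L$ is contained in a single $V_n$. Local finiteness of $\{V_n\}$ together with the discreteness of $T\cap L$ makes this achievable (shrink plaque by plaque). The local chart retractions then glue unambiguously into a single tangential contraction $H\colon U\times I\to X$ with $H(U\times\{1\})\subset\widetilde{T}$, since in every chart the target of the deformation is the common transversal $\widetilde T$. Hence $U$ is $\FF$-categorical and
\[
\Cat(U,\FF,\LB)\le\tau_\LB(U)\le\LB(H(U\times\{1\}))\le\LB(\widetilde{T})<\LB(T)+\varepsilon.
\]

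The main obstacle is the gluing step: combining the local chart contractions into a single globally continuous tangential contraction. The approach above sidesteps a genuine partition-of-unity argument by making $U$ so thin that every plaque-component of $U$ lies inside one chart, so the retractions are applied one chart at a time. A more geometric alternative, using the $C^2$ Hilbert lamination structure of this section, would be to define $H$ via the leafwise gradient flow of the squared distance to $T$ in the leafwise Riemannian metric, in the spirit of Example~\ref{e:vectorfield}; but it is not needed for the measure estimate. The essential ingredients are the external regularity of $\LB$, which provides the target $\widetilde{T}$, and the discreteness of $T\cap L$, which makes the corresponding thin tube $U$ exist and be $\FF$-categorical.
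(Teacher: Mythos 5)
Your overall strategy is the right one -- use regularity of $\LB$ to fatten (the projection of) $T$ into an open transverse set of measure at most $\LB(T)+\varepsilon$, and take for $U$ a union of chart saturations that retracts onto it -- but the gluing step, which you yourself identify as the main obstacle, is a genuine gap and is not actually sidestepped by making $U$ thin. Arranging that every plaque-component of $U\cap L$ lies in a single chart $V_n$ does not make the contraction unambiguous: a component may lie in several of the $V_n$ simultaneously, and the chart retractions $(b,s)\mapsto(0,s)$ of distinct charts are genuinely different maps on the overlap. To define $H$ you must choose an index for each component (say the least $n$), and that choice function need not be locally constant in the transverse direction -- a nearby plaque in a nearby leaf can fall out of $V_n$ or into some $V_m$ with $m<n$ -- so the glued $H$ is in general discontinuous as a map $U\times I\to X$. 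This is exactly the difficulty that Lemma~\ref{l:disgregation} resolves by discarding a closed null-transverse set along the boundaries of the pieces, and your argument needs some version of that device (or of the dimensional trick of Proposition~\ref{p:dimensional trick}) to go through. A secondary, smaller point: $T$ need not be contained in any open transversal, so $\widetilde T$ has to be built from projections of pieces of $T$ into associated local transversals, and the estimate $\LB(\widetilde T)<\LB(T)+\varepsilon$ then rests on the holonomy invariance $\LB(\pi(T))\le\LB(T)$, which should be said.

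The paper avoids all of this by a much shorter route: it first reduces to the case where $\overline{T}$ is contained in a single foliated chart $V$, takes $F\subset T_V$ open with $\pi(T)\subset F$ and $\LB(F)<\LB(\pi(T))+\varepsilon\le\LB(T)+\varepsilon$ by regularity, and sets $U=\str_V(F)$, which contracts tangentially onto $F$ by the chart projection alone -- no gluing of contractions from different charts ever occurs. For the general case one decomposes $T$ into countably many disjoint pieces with closures in charts, applies the one-chart case with errors $\varepsilon/2^n$, and invokes the subadditivity of the relative $\LB$-category (Proposition~\ref{p:subaddtivity}); note that $\Cat(U,\FF,\LB)$ is defined as an infimum over countable open coverings of $U$, so no single global contraction of $U$ is needed for the estimate, which is all that is used in Corollary~\ref{t:meascrit}. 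I recommend you restructure your proof along these lines: keep your regularity step, but localize to one chart before contracting, and let subadditivity do the global bookkeeping.
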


\begin{proof}
It is easy to see that we can suppose that $\overline{T}$ is contained in a foliated chart $V$. Let $T_V$ be a transversal associated to $V$ and $\pi:V\to T_V$ the canonical projection. Since $\LB$ is regular, there exists an open subset $F\subset T_V$ containing $\pi(T)$ such that $\LB(F)<\LB(\pi(T))+ \varepsilon$. Of course, $T\subset\str_V(F)$ and $\str_V(F)$ is an open set that contracts tangentially to $F$. The result follows since $\LB(\pi(T))\leq \LB(T)$.
\end{proof}

We restrict our study to the case where the leafwise critical points are isolated on the leaves or, in the case of the \LB-category, the set of leaves with non leafwise isolated critical points is null transverse. Therefore the set of leafwise critical points is a transverse set.

\begin{defn}[Critical sets]\label{d:partcrit}
Suppose that $f:\FF\to\R$ is a $C^2$ function such that for each leafwise critical point $p$, either $p$ is a relative minimum, or there exists another critical point $x$ such that $p<x$. Define $M_0,M_1,\dots$ inductively by
\begin{align*}
M_0&=\{\,x\in \Crit_\FF(f)\mid\not\exists\ y\ \text{such that}\ x\ll y\,\}\;,\\
M_i&=\{\,x\in\Crit_\FF(f)\mid\forall y\,\ x\ll y\ \Rightarrow\ y\in M_0\cup\dots\cup M_{i-1}\,\}\;.
\end{align*}
Clearly, $M_0$ contains all relative minima on the leaves. We also
set $C_0(f)=M_0$ and $C_i(f)=M_i\setminus(M_0\cup\dots\cup
M_{i-1})$. Observe that, if $x\in\omega(p)\cap C_i(f)$ for
some $i$, then $\omega(p)\subset C_i(f)$. There is an analogous
property for the $\alpha$-limit. The notation $C_i$ will be used
if there is no confusion and they will be called the {\em critical sets\/} of $f$. Let $p\ll p^*$. Then $i_{p^*}< i_p$,
where $i_{p}$ and $i_{p^*}$ are the indexes such that $p\in
C_{i_p}$ and $p^*\in C_{i_{p^*}}$. Observe that the critical sets are $\sg$-compact.
\end{defn}

The definition of a Palais-Smale condition is needed for this section. For Hilbert laminations, it could be adapted by taking functions that satisfy the Palais-Smale condition on all (or almost all) leaves. But this is very restrictive because it would mean that the set of relative minima meets each leaf in a relatively compact set (which is non-empty when $f$ is bounded from below), and therefore there would exist a complete transversal meeting each leaf at one point. This would be a very restrictive condition on the foliation. Thus, instead, we use the following weaker condition.

\begin{defn}\label{d:PStopological}
An {\em $\omega$-Palais-Smale\/} (or simply, {\em $\omega$-PS\/}) function is a function $f\in
C^{2}(\FF)$ such that all of its critical sets are closed (in the ambient topology), for any $p\in\Crit_\FF(f)$, the set $\{\,x\in\Crit_\FF(f)\mid p\ll x\,\}$ is compact, and this set is empty if and only if $p$ is a relative minimum and any flow line of $\phi$ has a non-empty $\omega$-limit. An {\em $\alpha$-Palais-Smale\/} (or simply, {\em $\alpha$-PS\/}) function is defined analogously by taking the set $\{\,x\in\Crit_\FF(f)\mid x\ll p\,\}$.
\end{defn}

Of course, $f$ is $\omega$-PS if and only if $-f$ is $\alpha$-PS. The set of relative minima of a $\omega$-PS function bounded from below is always non-empty in any leaf. A usual bounded from below Palais-Smale function on a manifold with isolated critical points is clearly an $\omega$-PS function.

\begin{thm}\label{t:TanCrit}\cite{Menino2}
Let $(X,\FF)$ be a Hilbert lamination endowed with a Riemannian metric on the leaves varying continuously on the ambient space, and let $f$ be an $\omega$-PS function. Suppose that $\Crit_\FF(f)$ meets each leaf in a discrete set. Then
$\Cat(\FF)\leq \#\{\text{critical sets of}\ f\}\;.$ In fact there exist a covering by tangentially categorical open sets $U_i$ such that each $U_i$ contracts to a transverse object that is a transverse neighborhood of the critical set $C_i$ (so small as desired).
\end{thm}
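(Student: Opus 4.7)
The plan is to use the modified gradient flow $\phi_t$ from Example~\ref{e:vectorfield} to construct, for each critical set $C_i$, a tangentially categorical open set $U_i$ which is a ``basin of attraction'' of $C_i$ under the flow, and then show that these sets cover $X$.

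\emph{Step 1: Categorical neighborhoods of the $C_i$.} Each $C_i$ is $\sigma$-compact, closed, and meets each leaf in a discrete set, hence is a transverse set. Choose a small open transversal $T_i$ containing $C_i$ (possible since leafwise critical points are isolated and $C_i$ is closed in $X$). By Lemma~\ref{l:toplowaproxim} applied to $T_i$ (or rather its underlying proof, just for the tangential categorical structure), there is a tangentially categorical open neighborhood $V_i$ of $C_i$ whose canonical contraction lands in a transverse neighborhood of $C_i$. Take the $V_i$ pairwise disjoint and small enough that $\overline{V_i}$ contains no critical points outside $C_i$ (using that $\Crit_\FF(f)$ is closed and discrete on leaves, together with the $\omega$-PS hypothesis that the sets $\{x:p\ll x\}$ are compact to arrange disjointness between the different levels).

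\emph{Step 2: Basins via the flow.} For each $i$, shrink $V_i$ to an open $V_i'$ with $\overline{V_i'}\subset V_i$, and set
\[
U_i=\bigl\{\,p\in X\mid\exists\,t\ge 0\ \text{with}\ \phi_t(p)\in V_i',\ \text{and}\ \phi_s(p)\notin V_j\ \text{for all}\ s\in[0,t],\ j<i\,\bigr\}\cup V_i.
\]
Openness of $U_i$ uses continuity of $\phi$ in $(p,t)$, the fact that entry into the open set $V_i'$ is an open condition, and the complementary condition $\phi_s(p)\notin V_j$ (with $V_j$ open and $s$ ranging in a compact interval up to the first entry time) becomes an open condition after working with the open $V_i'$ and the open $V_j$.

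\emph{Step 3: $\{U_i\}$ covers $X$.} Given $p\in X$, the $\omega$-Palais--Smale hypothesis gives $\omega(p)\ne\emptyset$ and $\omega(p)\subset C_i$ for a unique $i$ (the invariance observation in Definition~\ref{d:partcrit}). Therefore $\phi_t(p)\in V_i'$ for all sufficiently large $t$. Let $i$ be the minimum index for which some $\phi_t(p)$ lands in some $V_j'$; then $p\in U_i$. (Because of the ordering ``$p\ll p^*\Rightarrow i_{p^*}<i_p$'' and the fact that any $V_j$ visited before limiting means $\omega$-limit is in some $C_j$ with $j\le i$, this minimum is well-defined.)

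\emph{Step 4: Each $U_i$ is tangentially categorical.} On $U_i$, define the first-entry time $\tau(p)=\inf\{t\ge 0:\phi_t(p)\in V_i'\}$, which is finite and upper semicontinuous; multiplying by a suitable cutoff one obtains a continuous leafwise map $\sigma:U_i\to[0,\infty)$ with $\phi_{\sigma(p)}(p)\in V_i$ for all $p$. Then the leafwise homotopy $H^i(p,s)=\phi_{s\sigma(p)}(p)$ for $s\in[0,1]$ deforms $U_i$ inside $U_i\cup V_i=U_i$ to $V_i$ (using the flow preserves leaves, from Example~\ref{e:vectorfield}). Composing (via the operation $*$ of Definition~\ref{d:operation homotopy}) with the contraction of $V_i$ to its transverse section exhibits an $\FF$-contraction of $U_i$ whose image is a transverse neighborhood of $C_i$. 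Hence $U_i$ is $\FF$-categorical with the stated contraction.

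\emph{Main obstacle.} The delicate point is openness of $U_i$ together with having $\sigma$ continuous: one must ensure that a small perturbation of $p$ does not cause the flow to be diverted into some $V_j$, $j<i$, before reaching $V_i'$. This requires choosing $V_i'$ strictly inside $V_i$ and carefully controlling the flow using that no critical points of other indices lie in $\overline{V_i}$, which relies precisely on the compactness of $\{x:p\ll x\}$ from the $\omega$-PS condition. This kind of argument is essentially the content of the tangential version proved in \cite{Menino2}, from which the present statement follows directly.
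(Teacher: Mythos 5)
The paper itself contains no proof of this theorem: it is quoted verbatim from \cite{Menino2}, so there is no internal argument to compare yours against. Your sketch follows the classical Lusternik--Schnirelmann deformation strategy (categorical neighborhoods of the critical sets, plus gradient-flow basins of attraction ordered by the relation $\ll$), which is certainly the natural route and presumably the one taken in \cite{Menino2}. But as written the sketch has genuine gaps located exactly at the hard points, and the ``main obstacle'' you defer to the reference is in fact most of the proof.

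Concretely: (a) In Step 4, the first-entry time $\tau$ is indeed upper semicontinuous and can be dominated by a continuous $\sigma$, but then nothing guarantees $\phi_{\sigma(p)}(p)\in V_i'$, or even $\in V_i$: after first entering $V_i'$ a flow line may leave $V_i$ again and only return to a neighborhood of $C_i$ asymptotically, since convergence to $\omega(p)$ is only in the limit $t\to\infty$. One must either arrange the $V_i$ to be positively invariant under the modified gradient flow (typically via sublevel sets of $f$ together with the $\omega$-PS condition) or pass to an infinite-time deformation and prove continuity of the limit; and the continuity required is in the \emph{ambient} topology of $U_i\times\R$, not just leafwise --- entry times can jump between nearby leaves whose flow lines converge to different critical sets, which is the crux in the laminated setting and is not addressed. (b) The covering argument of Steps 2--3 does not close as stated: membership in $U_i$ forbids visiting $V_j$ for $j<i$, while your minimal index $i$ is chosen from entries into the smaller sets $V_j'$; a point whose flow line passes through $V_j\setminus V_j'$ for some $j<i$ before reaching $V_i'$ lies in no $U_k$ under your definitions. (c) In Step 1, when there are infinitely many critical sets (the paper's Hilbert-torus example shows this is the generic situation) the disjoint closed sets $C_i$ may accumulate on one another, in which case pairwise disjoint open neighborhoods $V_i$ need not exist; your definition of the basins depends on that disjointness. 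Each of these can plausibly be repaired, but until they are, the argument is a plan rather than a proof.
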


\begin{cor}\label{t:meascrit}
Let $(X,\FF,\LB)$ be a Hilbert lamination endowed with a
Riemannian metric on the leaves varying continuously on the ambient space, and with a (regular) transverse
invariant measure. Let $f$ be an $\omega$-PS function. Then $\Cat(\FF,\LB)\leq \LB(\Crit_\FF(f))$.
\end{cor}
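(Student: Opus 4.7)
The plan is to combine Theorem~\ref{t:TanCrit} with the regularity of $\LB$ via an approximation argument. First I would reduce to the case in which the leafwise critical set is discrete on every leaf. As the paper already observes, the set of leaves carrying non-isolated leafwise critical points is contained in the saturation of a null transverse set, so by Proposition~\ref{p:nullset} the \LB-category is unchanged upon removing this saturation. Hence we may assume that $\Crit_\FF(f)$ is a transverse set meeting each leaf discretely, which is precisely the hypothesis of Theorem~\ref{t:TanCrit}. In particular $\Crit_\FF(f)=\bigsqcup_i C_i$ is a countable disjoint union of $\sigma$-compact transverse sets, so $\LB(\Crit_\FF(f))=\sum_i\LB(C_i)$.

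Fix $\varepsilon>0$. Theorem~\ref{t:TanCrit} gives a countable covering $\{U_i\}$ of $X$ by \FF-categorical open sets such that each $U_i$ admits a tangential contraction $H^i$ whose final image is contained in a prescribed transverse neighborhood of $C_i$. Using that $\LB$ is regular and $C_i$ is $\sigma$-compact (the outer regularity used in Lemma~\ref{l:toplowaproxim}), I choose an open transverse neighborhood $T_i\supset C_i$ with
\begin{equation*}
  \LB(T_i)<\LB(C_i)+\varepsilon/2^i,
\end{equation*}
and then pick $H^i$ so that $H^i(U_i\times\{1\})\subset T_i$. Consequently $\tau_\LB(U_i)\leq\LB(T_i)<\LB(C_i)+\varepsilon/2^i$.

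Summing over $i$,
\begin{equation*}
  \Cat(\FF,\LB)\leq\sum_i\tau_\LB(U_i)\leq\sum_i\LB(C_i)+\varepsilon=\LB(\Crit_\FF(f))+\varepsilon,
\end{equation*}
and letting $\varepsilon\to 0$ yields the claim.

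The only nontrivial step is the clause \emph{``pick $H^i$ so that $H^i(U_i\times\{1\})\subset T_i$.''} I expect this to be the main obstacle, but it is furnished directly by Theorem~\ref{t:TanCrit}: its proof constructs $U_i$ by flowing backward a prescribed transverse neighborhood of $C_i$ under the modified gradient flow of Example~\ref{e:vectorfield}, so the ``target'' transversal may be shrunk into any open $T_i\supset C_i$ one likes. Combined with outer regularity, this is exactly what makes the sum telescope to $\LB(\Crit_\FF(f))+\varepsilon$.
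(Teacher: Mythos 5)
Your proposal is correct and follows essentially the same route as the paper: the paper's proof likewise assumes leafwise critical points are leafwise isolated, takes the covering $\{U_i\}$ from Theorem~\ref{t:TanCrit} contracting into small transverse neighborhoods of the $C_i$, and uses outer regularity (packaged as Lemma~\ref{l:toplowaproxim}) to get $\Cat(U_i,\FF,\LB)\leq\LB(C_i)+\varepsilon/2^i$ before summing. The only cosmetic difference is that you invoke regularity directly where the paper cites Lemma~\ref{l:toplowaproxim}.
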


\begin{proof}[Proof of Corollary~\ref{t:meascrit}]
We suppose that leafwise critical points are leafwise isolated. Take $U_i$ like in Theorem~\ref{t:TanCrit} so that $\Cat(U_i,\FF,\LB)\leq \LB(C_i) + \varepsilon/2^i$ (by Lemma~\ref{l:toplowaproxim}).
\end{proof}

\begin{question}
We can ask if the same results are also true when the critical sets are not closed. We are greatly convinced that the answer is affirmative, but the proof seems to be much more difficult.
\end{question}

\begin{exmp}[Hilbert torus]
Let $l^2(\R)=\{(x_n)\in\R\mid \sum_n |x_n|^2<\infty\}$ with the usual Hilbert product and consider the map $\pi:l^2(\R)\to \prod_{n=1}^\infty S^1=T^\infty$, $(x_n)\mapsto (\exp(2\pi i x_n))$. Consider the topology on $Y=\pi(l^2(\R))$ such that $\pi:l^2(\R)\to Y$ is a surjective local homeomorphism; in this way, a structure of Hilbert manifold is induced on $Y$. Even with this topology, $\pi_1(Y)=\bigoplus_{n=1}^\infty \Z$ and the cuplength is infinite, yielding $\Cat(Y)=\infty$.

A Hilbert lamination can be defined with the suspension of the homomorphism $\pi_1(Y)\to\operatorname{Diff}(S^1)$ given by $e_n\mapsto R_n$, where the elements $e_n$ form the standard base of $\bigoplus_{n=1}^\infty \Z$ and $R_n:S^1\to S^1$ is a rotation of the circle. Thus we have a structure of Hilbert lamination on the suspension $X=l^2(\R)\times_{(R_n)}S^1$. Of course, the usual Lebesgue measure on $S^1$ is an invariant measure for this lamination. Since the cuplength of $Y$ is infinite, it is clear that the leafwise cuplength of $X$ is also infinite. Therefore $\Cat(X)=\infty$ (independently of the choice of the rotations $R_n$).

However, if all the rotations are given by \Q-linear independent angles, then $\Cat(X,\LB)=0$, which can be seen as follows. Let $T^n=\prod_{i=1}^n S^1\times\prod_{i=n+1}^\infty\{(1,0)\}$. We can consider the usual lamination $\R^n\times_{(R_1,\dots,R_n)} S^1$ embedded in a natural way into $X$. Let $q:l^2(\R)\times S^1\to X$ be the quotient map. Thus an  open set in $X$ is a quotient of an open set in $l^2(\R)\times_{(R_n)}S^1$. Consider in the same way $q_n:\R^n\times S^1\to \R^n\times_{(R_1,\dots,R_n)} S^1$.

We knew that $\Cat(\R^n\times_{(R_1,\dots,R_n)} S^1,\LB)=0$. Then let $\{U_i^n\}_{i\in\N}$ be an open covering of $\R^n\times_{(R_1,\dots,R_n)} S^1$ such that $\sum_i \tau(U_i^n)<\frac{\varepsilon}{2^n}$. Consider the sets 
$$
\widetilde{U_i^n}=\{\pi((x_k)\times\{z\})\ |\ (x_1,...,x_n)\in q_n^{-1}(U_i^n)\,,\  | x_k |<1/2\ \forall k>n \}\;.
$$
They are open in $X$ and tangentially contractible. Therefore $\sum_i \tau(\widetilde{U_i^n})<\frac{\varepsilon}{2^n}$. The sets $\widetilde{U_i^n}$, $i,n\in\N$, form a covering of $\pi(T^\infty)$ since, for all $(z_n)\in Y$, there exists $N$ large enough such that $z_n$ is close to $(1,0)$ for all $n\geq N$. Thus the \LB-category of $X$ is zero.

Thus, in this Hilbert lamination, the number of critical sets of any $\omega$-PS map is always infinite, but it seems possible to find functions where the measure of the set of critical points is arbitrarily small.
\end{exmp}

\begin{ack}
This paper contains part of my PhD thesis, whose advisor is Prof.
Jes\'{u}s A. \'{A}lvarez L\'{o}pez.
\end{ack}

\end{document}